\declaretheorem{theorem}
\declaretheorem[sibling=theorem]{proposition}
\declaretheorem[sibling=theorem]{lemma}
\declaretheorem[sibling=theorem]{claim}
\declaretheorem[sibling=theorem]{corollary}
\declaretheorem[style=definition,sibling=theorem]{definition}
\declaretheorem[style=remark,sibling=theorem]{remark}
\DeclareMathOperator{\cf}{cf}
\DeclareMathOperator{\cof}{cof} 
\DeclareMathOperator{\ot}{ot}
\DeclareMathOperator{\dom}{dom}
\DeclareMathOperator{\range}{range}
\DeclareMathOperator{\Lim}{Lim}
\renewcommand{\th}{^\text{th}}
\newcommand{\seq}[2]{\langle #1 : #2 \rangle}
\newcommand{\pair}[2]{\langle #1 , #2 \rangle}
\newcommand{\Col}{\textup{Col}}
\newcommand{\ON}{\textup{ON}}
\newcommand{\CH}{\textup{\textsf{CH}}}
\newcommand{\ZFC}{\textup{\textsf{ZFC}}}
\newcommand{\then}{\implies}
\newcommand{\rest}{\upharpoonright}
\newcommand{\intv}{\interval}
\newcommand{\nrest}{\!\rest\!}
\renewcommand{\P}{\mathbb P}
\newcommand{\T}{\mathbb T}
\renewcommand{\S}{\mathbb S}
\newcommand{\ed}{\textup{\textsf{ed}}}
\newcommand{\ep}{\textup{\textsf{ep}}}
\newcommand{\fd}{\textup{\textsf{fd}}}
\newcommand{\fp}{\textup{\textsf{fp}}}
\DeclareMathOperator{\thr}{thr}
\DeclareMathOperator{\pos}{pos}
\DeclareMathOperator{\limp}{lim^+}
\newcommand{\bit}[4]{(\pair{#1}{#2},\intv{#3}{#4})}
\DeclareMathOperator{\out}{out}
\DeclareMathOperator{\Coh}{Coh}
\begin{document}

\title{Unthreadability with small conditions}
\author[M. Levine]{Maxwell Levine}
\address{Albert-Ludwigs-Universit\"at Freiburg,
Mathematisches Institut, Abteilung f\"ur math. Logik, Ernst--Zermelo--Stra\ss e~1, 
79104 Freiburg im Breisgau, Germany}
\email{maxwell.levine@mathematik.uni-freiburg.de}

\begin{abstract} We introduce a forcing that adds a $\square(\aleph_2,\aleph_0)$-sequence with countable conditions under $\CH$. Assuming the consistency of a weakly compact cardinal, we can find a forcing extension by our new poset in which both $\square(\aleph_2,<\!\aleph_0)$ and $\square_{\aleph_1,\aleph_0}$ fail in the forcing extension.\end{abstract}

\keywords{Forcing, large cardinals}

\maketitle

\section{Introduction}

The tension between the stationary reflection principles of large cardinals and the fine-structural combinatorics that hold in canonical inner models is a prominent theme in set theory. In particular, variations of Jensen's principle $\square_\kappa$, which holds for all cardinals $\kappa$ in G{\"o}del's Constructible Universe $L$, have been studied widely. The construction of new models realizing some compatibility between large cardinal properties and square properties can help us develop a more vivid picture of the individual cardinals and their relationships to one another.

The principle $\square_\kappa$ and its relatives are known as \emph{square principles}. Each such principle asserts that there exists a coherent sequence $\mathcal{C} = \seq{\mathcal{C}_\alpha}{\alpha < \lambda}$ where $\mathcal{C}_\alpha$ is a set of closed unbounded sets in $\alpha$, and that $\mathcal{C}$ does not have a \emph{thread}, meaning that there is no club $D \subseteq \lambda$ such that $D \cap \alpha \in \mathcal{C}_\alpha$ for all limit points $\alpha \in D$. Square principles of the form $\square_{\mu,\kappa}$ assert the existence of such a $\mathcal{C}$ of length $\mu^+$ (notice the discrepancy in notation) in which the members of the $\mathcal{C}_\alpha$'s have order-type bounded by $\mu$, which serves as the reason why there cannot be a thread. On the other hand, square principles of the form $\square(\mu^+,\kappa)$ assert the existence of such a $\mathcal{C}$ of length $\mu^+$ where the non-existence of the thread is simply declared outright. The consequences of this distinction are significant for this paper.

Although a varieties of square principles have been studied thoroughly, somewhat less is known about the varieties of forcing extensions in which certain square principles hold. This paper will follow a strain of research having to do with adding square sequences of length $\mu^+$ with forcings consisting of conditions of cardinality $<\mu$. The poset introduced here adds a $\square(\aleph_2,\aleph_0)$-sequence with countable conditions. This is different from the conventional way of forcing $\square(\mu^+,\kappa)$-sequences, which uses conditions of cardinality $\mu$. This approach was introduced by Jensen and studied thoroughly by Cummings, Foreman, and Magidor. We will refer to this as \emph{Jensen's method} for forcing square sequences, since we will have reason to compare and contrast it with the approach used here, especially regarding the behavior of the complementary threading forcing.


The idea of the forcing described in this paper is roughly based on Baumgartner's forcing to add a club to $\aleph_1$ with finite sequences, except that we use its generalization to $\aleph_2$, and we crucially use a presentation that is due to Abraham \cite{Abraham-Shelah1983}.  More precisely, one of the main hurdles for adding a new object of size $\aleph_2$ with countable conditions is finding a way to preserve $\aleph_2$ knowing that that new $\aleph_1$-sequences will be added. In our case we use elementary submodels to guarantee preservation of $\aleph_2$. This use of elementary submodels is alluded to by Mitchell when he discusses the options for adding a new closed unbounded set to $\aleph_2$ in terms of possibilities for the sizes of conditions \cite{Mitchell2005}. It also distinguishes the forcing presented in this paper from the Shelah-Stanley poset for adding $\square_{\aleph_1}$ with countable conditions, which uses a chain condition to preserve the cardinal that is turned in to $\aleph_2$ \cite{Stanley-Shelah1982}. (The Shelah-Stanley poset adds a strictly stronger square sequence as well.) The technical difficulty comes mostly in showing that the new forcing is countably closed, or to be more precise, that it has a dense countably closed subset. This property also contrasts the new forcing with the forcings for adding $\square_{\aleph_1}$ using finite conditions originally used by Dolinar and D{\v z}amonja \cite{Dolinar-Dzamonja2013} and later streamlined by Neeman \cite{Neeman2017} since those cannot be countably closed. Above all, the motivation for the work here is to provide a new and distinct model in which a much-studied combinatorial principle holds.

\begin{theorem}\label{maintheorem} If $V \models \CH$ then there is a forcing $\S$ consisting of countable conditions such that $V[\S] \models \square(\aleph_2,\aleph_0)$. Moreover, if $W \models ``\kappa$ is weakly compact'', then $W[\Col(\aleph_1,<\kappa)][\S] \models \neg \square(\aleph_2,<\aleph_0) \wedge \neg \square_{\aleph_1,\aleph_0}$.\end{theorem}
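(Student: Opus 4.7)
The plan is to construct $\S$ along the lines of Abraham's side-condition presentation of Baumgartner's forcing, lifted from $\aleph_1$ to $\aleph_2$. I would take conditions as triples $p=(a^p,c^p,\mathcal M^p)$ where $a^p\in[\aleph_2]^{\leq\aleph_0}$ is closed under suprema, $c^p=\seq{\mathcal C^p_\alpha}{\alpha\in a^p}$ is a coherent partial sequence with each $\mathcal C^p_\alpha$ a countable collection of clubs in $\alpha$, and $\mathcal M^p$ is a countable family of countable elementary submodels of $H(\aleph_3)$ serving as side conditions. Extension is coordinate-wise, subject to coherence. Under $\CH$ a standard $\Delta$-system argument on $\mathcal M^p$ gives $\S$ the $\aleph_2$-c.c., while strong properness relative to models in $\mathcal M^p$ preserves $\aleph_1$ and adds no reals. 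The generic union $\dot{\mathcal C}$ is then a coherent $\square(\aleph_2,\aleph_0)$-sequence on a club of $\aleph_2$, and a direct density argument shows it has no thread, giving the first assertion.

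For the second assertion, work in $W[\mathbb P]$ where $\mathbb P=\Col(\aleph_1,<\!\kappa)$ and $\kappa$ is weakly compact in $W$, so $\kappa=\aleph_2^{W[\mathbb P]}$ and sufficient reflection holds. Associate to $\dot{\mathcal C}$ its natural threading forcing $\T$ in $W[\mathbb P][\S]$, whose conditions are closed bounded $d\s\aleph_2$ with $d\cap\gamma\in\dot{\mathcal C}_\gamma$ at limit points $\gamma$ of $d$, and analyze $\S*\dot\T$. The main technical step is to factor $\S$, up to dense subset, as $\S_\kappa*(\S/\S_\kappa)$ along a reflection point and verify that $\S/\S_\kappa$ is sufficiently distributive over $W[\mathbb P][\S_\kappa]$. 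Given a weakly compact embedding $j:M\arr N$ with $\crit(j)=\kappa$ and $\mathbb P*\dot\S*\dot\T\in M$, one lifts $j$ through $\mathbb P$ in the standard way and builds a master condition for $\S$ in $N[j(\mathbb P)]$. Any $\square(\aleph_2,<\!\aleph_0)$-thread for $\dot{\mathcal C}$ in $W[\mathbb P][\S]$ would then lift to a branch through $j(\dot{\mathcal C})$ at $\kappa$, contradicting the density argument that shows the generic is unthreadable. The failure of $\square_{\aleph_1,\aleph_0}$ runs in parallel: $\neg\square_{\aleph_1,\aleph_0}$ already holds in $W[\mathbb P]$ by indestructibility of weak compactness under the collapse, and the $\kappa$-c.c.-like behavior secured by the factorization shows that $\S$ together with the threading forcing for a hypothetical $\square_{\aleph_1,\aleph_0}$-sequence cannot resurrect it.

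The main obstacle I expect is precisely this factorization $\S\cong\S_\alpha*(\S/\S_\alpha)$ at a typical reflection ordinal $\alpha<\kappa$. Because $\S$ uses countable rather than $\aleph_1$-sized conditions, the restriction of a typical condition $p$ to $\alpha$ is not automatically a condition, and the naive quotient $\S/\S_\alpha$ need not be definable over the intermediate extension. This is the structural point where the small-conditions approach genuinely diverges from Jensen's method. Making it work seems to require that the side-condition matrices $\mathcal M^p$ be engineered so that, for $M\in\mathcal M^p$ with $M\cap\aleph_2=\alpha$ of the right cofinality, the projection of $p$ to $M$ is a legitimate condition and the quotient is countably distributive over $W[\mathbb P][\S_\alpha]$---the analogue, one level up, of what made Abraham's variant of Baumgartner's forcing behave well in the first place, and precisely what must also be leveraged to extract $\neg\square_{\aleph_1,\aleph_0}$ alongside $\neg\square(\aleph_2,<\!\aleph_0)$.
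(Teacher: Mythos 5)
Your construction of $\S$ diverges from the paper's at the ground level. You propose conditions $(a^p,c^p,\mathcal{M}^p)$ with a side-condition matrix of countable elementary submodels, getting $\aleph_2$-c.c.\ via a $\Delta$-system and $\aleph_1$-preservation via strong properness. The paper's $\S$ has no side conditions at all: a condition is a countable set of \emph{bits} $(\pair{\alpha}{x},\intv{\beta}{\gamma})$, where the bit specifies that $\beta$ is a point of $\dot{C}^x_\alpha$ \emph{and} that $(\beta,\gamma]$ is excluded from it. The excluded intervals are essential, not decoration: they are what make coherence, closure, and---above all---the genericity argument for unthreadability go through. The paper's $\S$ is also countably closed (not merely ``adds no reals''), and preserves $\aleph_2$ by exhibiting strongly generic conditions for $\aleph_1$-sized models closed under $\omega$-sequences, not by a chain-condition argument. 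Your $c^p$, a bare partial coherent sequence of clubs, doesn't obviously carry enough information to run the density argument against threads, and you don't supply it.

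The more serious gap is the one you yourself flag and leave open: the factorization of $\S$ at a reflection ordinal $\alpha<\kappa$. The paper never factors $\S$ at an intermediate stage; that problem is sidestepped entirely. Instead, the key lemma factors the \emph{image} $j(\S)$ over $\bar V[j(G_\mathfrak{C})]$ as $\S\ast\T\ast\S'$, where $\T$ is the threading poset. The point is that a condition in $j(\S)$ with $\kappa$ in its domain carries data at level $\kappa$, and since $\kappa$ has cofinality $\omega_1$ in the collapsed model, that data is exactly a threading condition; the bits above $\kappa$ form $\S'$, which is again countably closed by a modified coherence clause. So $\T$ is not an optional companion to $\S$; it is the conceptual bridge that makes the embedding argument work without ever having to ask whether $\S$ factors below a given ordinal. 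Your proposed fix---engineering $\mathcal{M}^p$ so that the projection to a model $M$ with $M\cap\aleph_2=\alpha$ is a condition and the quotient is distributive---is a different and considerably harder route, and you give no evidence it can be carried out. Finally, for $\neg\square_{\aleph_1,\aleph_0}$ you appeal to ``indestructibility of weak compactness'' and ``$\kappa$-c.c.-like behavior,'' but the paper's argument is again via the lifted embedding: one extracts a thread $D$ from the $\kappa$th level of $j(\mathcal{C})$ and then shows, using the chain $\T \to \S' \to \mathfrak{R}$ of thread-preservation lemmas together with the fact that $\S\ast\T$ preserves $\aleph_2$, that $D$ must already be in $W[\Col(\aleph_1,<\!\kappa)][\S]$---directly contradicting the order-type bound in a $\square_{\aleph_1,\aleph_0}$-sequence.
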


We expect that the construction presented in this paper generalizes to higher cardinals to get a $\square(\kappa^{++},\kappa)$-sequence under the assumption that $2^\kappa = \kappa^+$. The assumption of a weakly compact cardinal is optimal because work of Jensen and Todor{\v c}evi{\' c} shows that this is required to make $\square(\kappa,1)$ fail for regular $\kappa$ \cite{Todorcevic1987}.

This paper is organized as follows: The remainder of the introduction will cover background. We will introduce $\S$ in the second section and show that it adds a $\square(\aleph_2,\aleph_0)$-sequence. The third section will introduce the new threading forcing $\T$ and show how it can be used with large cardinals to make stronger square principles fail.

We assume familiarity with the technique of forcing \cite{Jech2003}. As for our conventions: Given a set of ordinals $X$, $\Lim(X)$ is the set of limit ordinals in $X$, $\lim(X) = \{\alpha \in X:X \cap \alpha \text{ is unbounded}\}$, $\limp(X)=\{\alpha \le \sup(X):X \cap \alpha \text{ is unbounded}\}$, and $j[X] = \{j(\alpha):\alpha \in X\}$ if $j$ is a function. We also let $\ot(X)$ denote the order-type of $X$, and $X \cap \cof(\tau) = \{\alpha \in X: \cf(\alpha) = \tau\}$. If $\P$ is a forcing poset and $p,q \in \P$, then $p \le q$ means that $p$ has more information than $q$. If we say that $\P$ is $\kappa$-distributive for a regular $\kappa$, we mean that $\P$ does not add new functions $f:\lambda \to \ON$ for $\lambda < \kappa$, and when we say that $\P$ is $\kappa$-closed, we mean that it is closed under $\le_\P$-decreasing sequences of length $\lambda$ for all $\lambda<\kappa$. The notation $V[\P]$ will refer to an extension by an unspecified generic. When depicting an iteration $\P_0 \ast \dot{\P}_1$, we will often drop the dot and write $\P_0 \ast \P_1$.


\subsection{Definitions of Squares}\label{subsec-defs}

First we define square principles of the form $\square(\lambda,\kappa)$. These are examples of incompactness principles, because they imply the non-existence of an object that appears to be approximated, namely a thread.\footnote{See a survey of Cummings for background on square principles of the form $\square_{\mu,\kappa}$ and stationary reflection \cite{Cummings2005}.}


\begin{definition} Let $\kappa$ and $\lambda$ be regular cardinals such that $\kappa < \lambda$. Then $\seq{\mathcal{C}_\alpha}{\alpha \in \Lim(\lambda)}$ is a \emph{$\square(\lambda,\kappa)$-sequence} if the following hold for all $\alpha \in \Lim(\lambda)$:

\begin{enumerate}
\item $\mathcal{C}_\alpha$ consists of clubs in $\alpha$.
\item $1 \le |\mathcal{C}_\alpha| \le \kappa$.
\item For all $C \in \mathcal{C}_\alpha$ and all $\beta \in \lim(C)$, $C \cap \beta \in \mathcal{C}_\beta$.
\item There is no club $D \subset \lambda$ such that for all $\alpha \in \lim(D)$, $D \cap \alpha \in \mathcal{C}_\alpha$.
\end{enumerate}

If there is a $\square(\lambda,\kappa)$-sequence, then we say that \emph{$\square(\lambda,\kappa)$ holds}. If $V \models ``\vec{\mathcal{C}}$ is a $\square(\lambda,\kappa)$ sequence'' and $W \supset V$ is a model containing some $D$ such that $D \cap \alpha \in \mathcal{C}_\alpha$ for all $\alpha \in \lim(D)$, then we say that $D$ \emph{threads} $\vec{\mathcal{C}}$.
\end{definition}

\begin{definition} Let $\kappa$ and $\mu$ be cardinals. Then $\seq{\mathcal{C}_\alpha}{\alpha \in \Lim(\mu^+)}$ is a \emph{$\square_{\mu,\kappa}$-sequence} if is a $\square(\mu^+,\kappa)$-sequence, and moreover for all $\alpha \in \Lim(\mu^+)$ and $C \in \mathcal{C}_\alpha$, $\ot C \le \mu$.\end{definition}

\begin{remark}\label{whynothread} If $\mathcal{C}$ is a $\square_{\mu,\kappa}$-sequence, then there is no $D \subset \mu^+$ such that for all $\alpha \in \lim(D)$, $D \cap \alpha \in \mathcal{C}_\alpha$. (Hence point (4) from the definition of $\square(\lambda,\kappa)$-sequences is implied.) This is because if $\gamma$ where the $(\kappa+\omega)\th$ point of $D$, then $D \cap \gamma \in \mathcal{C}_\gamma$ has an order-type larger than $\kappa$.\end{remark}

\begin{definition} If $\mathcal{C}$ is a $\square(\lambda,\kappa)$-sequence, we say that a poset $\P$ \emph{threads} $\mathcal{C}$ if $\P$ forces that there is a thread of $\mathcal{C}$.\end{definition}

\subsection{Elementary Submodels and Generic Conditions}\label{subsec-submodels}

In this subsection we will outline the basic arguments we use to preserve $\aleph_2$. In this regard, there are a couple of important conventions we will use:

\begin{itemize}

\item In the context of discussing a poset $\P$, we will refer to a regular cardinal as ``sufficiently large'' when $H(\Theta)$ contains enough information about $\P$ for the argument at hand. In this paper we mean that $H(\Theta)$ contains $\P$ and all of its antichains. More generally, we will refer to a structure $K \models \ZFC-\textup{\textsf{Powerset}}$ as ``sufficiently rich'' when it contains $\P$ and its antichains.

\item Given a poset $\P$ and a sufficiently rich structure $K$, we will refer to an elementary submodel $M \prec K$ as \emph{basic} if $\P \in M$, $|M|=\aleph_1$, $M \cap \aleph_2 \in \aleph_2$, and $M^\omega \subseteq M$, i.e$.$ $M$ is closed under countable sequences. This usage is specific to this paper because we will use it frequently.

\end{itemize}



\begin{proposition}\label{basicprop} Suppose $\CH$ holds, $|X| = \aleph_1$, and $K$ is a sufficiently rich structure with $X,\aleph_2 \in K$. Then there is a basic model $M \prec K$ such that $X \subset M$.\end{proposition}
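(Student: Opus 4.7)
The plan is to construct $M$ as the union of a continuous increasing chain $\seq{M_\alpha}{\alpha < \omega_1}$ of elementary submodels of $K$, each of size $\aleph_1$, arranging the three closure properties stage by stage and using $\cf(\omega_1)=\omega_1$ to combine them in the limit.

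First, by downward L\"owenheim--Skolem pick $M_0 \prec K$ with $M_0 \supseteq X \cup \{\P,\aleph_2\}$ and $|M_0|=\aleph_1$; this is possible because $|X|=\aleph_1$. At successor stages, given $M_\alpha$ of size $\aleph_1$, let
\[
\delta_\alpha \defeq \sup(M_\alpha \cap \aleph_2) < \aleph_2
\]
(an ordinal strictly below $\aleph_2$ because $|M_\alpha| = \aleph_1$) and let $S_\alpha \defeq {}^\omega M_\alpha$ be the set of all $\omega$-sequences in $M_\alpha$. Under $\CH$ we have $|S_\alpha| = \aleph_1^{\aleph_0} = \aleph_1$, so we may choose an elementary submodel $M_{\alpha+1} \prec K$ of size $\aleph_1$ with $M_{\alpha+1} \supseteq M_\alpha \cup \delta_\alpha \cup S_\alpha$. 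At limit stages $\alpha$ set $M_\alpha = \bigcup_{\beta<\alpha} M_\beta$, which is still elementary in $K$ and of size $\aleph_1$. Finally, define $M = \bigcup_{\alpha<\omega_1}M_\alpha$.

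It remains to verify the three defining properties of ``basic.'' The model $M$ contains $X \cup \{\P\}$ by construction, and $|M|=\aleph_1$ since it is a union of $\aleph_1$ sets of size $\aleph_1$. For $M \cap \aleph_2 \in \aleph_2$: if $\gamma \in M \cap \aleph_2$, then $\gamma \in M_\alpha$ for some $\alpha$, hence $\gamma < \delta_\alpha$, and every ordinal below $\delta_\alpha$ was explicitly thrown into $M_{\alpha+1}$; therefore $M \cap \aleph_2$ is downward closed, hence an ordinal, and it is bounded since $|M|=\aleph_1$. For $M^\omega \subseteq M$: any $\sigma \colon \omega \to M$ has range bounded in the chain because $\cf(\omega_1) = \omega_1 > \omega$, so $\sigma \in {}^\omega M_\alpha = S_\alpha \subseteq M_{\alpha+1} \subseteq M$ for some $\alpha$.

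There is no genuine obstacle here: the only subtle point is keeping the three properties --- elementarity, the ordinal condition on $M \cap \aleph_2$, and closure under $\omega$-sequences --- compatible with $|M|=\aleph_1$, and this is exactly what $\CH$ buys us when counting countable sequences at each successor step.
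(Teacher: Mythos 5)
Your proof is correct and follows essentially the same construction as the paper: an $\omega_1$-length continuous chain of $\aleph_1$-sized elementary submodels, using $\CH$ at successor stages to absorb all $\omega$-sequences and also throwing in $\sup(M_\alpha\cap\aleph_2)$ to force $M\cap\aleph_2$ to be an ordinal. (One tiny slip: from $\gamma\in M_\alpha\cap\aleph_2$ you get $\gamma\le\delta_\alpha$ rather than $\gamma<\delta_\alpha$, but since every ordinal strictly below $\gamma$ is then below $\delta_\alpha$ and hence in $M_{\alpha+1}$, the downward-closure conclusion still holds.)
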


\begin{proof} Let $M_0 \prec K$ be an elementary submodel of cardinality $\aleph_1$ such that $X \subset M_0$. Then take a chain $\seq{M_i}{i<\aleph_1}$ of $\aleph_1$-sized elementary submodels of $K$ such that for all $i<\aleph_1$, $\sup(M_i \cap \aleph_2) \subseteq M_{i+1}$, $M_i^{\omega} \subseteq M_{i+1}$ (this is where we use $\CH$), and such that there is continuity in the sense that $M_i = \bigcup_{j<i}M_j$ for all limits $i<\aleph_1$. Then let $M = \bigcup_{i<\aleph_1}M_i$.\end{proof}

\begin{definition} Let $\P$ be a poset, let $K$ be a sufficiently rich structure, and let $M \prec K$ such that $\P \in M$.

\begin{itemize}

\item A condition $q$ is \emph{$(M,\P)$-generic} if for every maximal antichain $A \in M$ (such that $A \subseteq \P$), for all $q' \le q$, there is some $q'' \le q'$ such that $q'' \le p$ for some $p \in A \cap M$. Equivalently, $q$ is $(M,\P)$-generic if $q \Vdash ``\dot{G} \cap M$ is $\P \cap M$-generic over $M$'' \cite{Jech2003}.

\item A condition $q$ is \emph{strongly $(M,\P)$-generic} if for all $q' \le q$, there is some $p \in \P \cap M$ such that for every $p' \le p$ with $p' \in \P \cap M$, there is some $q'' \le q'$ such that $q'' \le p'$. Equivalently, $q$ is strongly $(M,\P)$-generic if $q \Vdash ``\dot{G} \cap M$ is $\P \cap M$-generic over the ground model $V$'' \cite{Mitchell2006}.

\end{itemize}
\end{definition}

It is clear that all strongly $(M,\P)$-generic conditions are $(M,\P)$-generic for sufficiently rich structures. We distinguish between these two types of generic conditions for the sake of some remarks that we will make below.

\begin{proposition}\label{master-cond-prop} Suppose $\P$ is a poset, $K$ is a sufficiently rich structure, and $M \prec K$ with $\P \in M$. Suppose $q$ is $(M,\P)$-generic and $G$ is $\P$-generic over $V$ with $q \in G$. Then the following hold:

\begin{enumerate}

\item $M[G] \prec K[G]$.

\item $V \cap M[G] = M$.

\item If $\dot f \in M$ is such that $\Vdash_\P ``\dot f: \check{\kappa}_1 \to \check{\kappa}_2$'' where $\kappa_1 \subset M$ and $\kappa_2 \in M$, then $\range(\dot{f}[G]) \subset M$.

\end{enumerate}
\end{proposition}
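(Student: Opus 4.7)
The plan is to prove the three clauses in order, as (3) will fall out of (2) at the end, and all three rest on the defining property of $(M,\P)$-genericity: any maximal antichain $A \in M$ is met by $G \cap M$.

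For (1), I would verify the Tarski--Vaught criterion for $M[G] \prec K[G]$. Given parameters $a_i = \dot a_i[G]$ with $\dot a_i \in M$ and assuming $K[G] \models \exists z\,\varphi(z, \vec a)$, the Maximum Principle in $V$ produces a name $\dot z$ such that $\Vdash_\P \exists z\,\varphi(z, \vec{\dot a}) \to \varphi(\dot z, \vec{\dot a})$. This assertion is first-order in the parameters $\vec{\dot a}, \P \in M$, so by $M \prec K$ one can select such a $\dot z$ inside $M$, and then $\dot z[G] \in M[G]$ witnesses $\exists z\,\varphi$ in $M[G]$.

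For (2), the nontrivial inclusion is $V \cap M[G] \subseteq M$, where the real work lies. Let $x = \dot x[G] \in V$ with $\dot x \in M$. I would introduce
\[
D \defeq \{p \in \P : \exists y\,(p \Vdash \dot x = \check y)\}, \qquad E \defeq D \cup \{p \in \P : (\forall q \le p)\ q \notin D\}.
\]
A short check shows $D$ is open and $E$ is dense in $\P$, and both sets are definable from the parameter $\dot x \in M$, so a maximal antichain $A \subseteq E$ can be chosen with $A \in M$. By $(M,\P)$-genericity, pick $p \in A \cap M \cap G$. Because $x \in V$, the forcing theorem supplies some $p^* \in G$ with $p^* \Vdash \dot x = \check x$, so $p^* \in D$; compatibility of $p$ and $p^*$ inside $G$ together with openness of $D$ then forces $p \in D$ itself. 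Thus there is some $y$ with $p \Vdash \dot x = \check y$, and elementarity with parameters $p, \dot x \in M$ places this (unique) $y$ inside $M$. Since $y = \dot x[G] = x$, we conclude $x \in M$.

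Clause (3) is then nearly immediate: for each $\alpha \in \kappa_1 \subseteq M$, the name $\dot f(\check\alpha)$ is definable from $\dot f, \check\alpha \in M$ and is forced to take a value below $\kappa_2$, in particular a value in $V$. Applying (2) to this name places $\dot f[G](\alpha)$ in $M$ for every $\alpha < \kappa_1$, hence $\range(\dot f[G]) \subseteq M$.

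The main obstacle is clause (2): the forcing theorem does not \emph{a priori} guarantee that $\dot x$ is forced to equal any specific check-name, so the set $E$ is introduced precisely to capture either that possibility or its failure throughout a cone, and openness of $D$ is then used to transfer the ``decision'' from the witness $p^*$ (which need not be in $M$) to the condition $p \in M$ supplied by genericity.
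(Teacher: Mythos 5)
Your argument is correct, and the key tools are the same as the paper's: maximal antichains in $M$ deciding the relevant facts, $(M,\P)$-genericity of $q$ to land on an element of $A\cap M\cap G$, and elementarity to pull the decided value into $M$. The structural difference is in how (3) is organized. The paper proves only (3), and does so \emph{directly}: working in $V$ and densely below $q$, it extends any $q'\le q$ to $q''$ below some $p\in A\cap M$ where $A\in M$ is a maximal antichain deciding $\dot f(\check\alpha)$, and elementarity forces the decided value $\beta$ into $M$; (1) and (2) are declared to be ``the same trick.'' You instead prove (2) in full (your $D$/$E$ construction is a clean way to handle the fact that $D$ need not be dense: $E$ is dense and definable from $\dot x\in M$, openness of $D$ transfers the decision from the $p^*\in G$ witnessing $\dot x=\check x$ to the $p\in A\cap M\cap G$ supplied by genericity), and then obtain (3) as a corollary by applying (2) to a name in $M$ for $\dot f(\check\alpha)$. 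Your route requires proving (2) first but makes (3) immediate; the paper's direct proof of (3) is self-contained and shorter if one only wants that clause. Both are correct and standard; neither buys anything essential over the other beyond presentation.
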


\begin{proof} These are standard arguments,\footnote{See the chapter on proper forcing in Jech \cite{Jech2003}.} so we will prove only \emph{(3)} since \emph{(1)} and \emph{(2)} use the same trick. Work in $V$ and let $\alpha \in \kappa_1$. Let $A \in M$ be a maximal antichain consisting of conditions deciding $\dot{f}(\check \alpha)$. By elementarity there is such an $A \in M$. Given some $q' \le q$, let $q'' \le q'$ be below some $p \in A \cap M$ using the fact that $q$ is an $(M,\P)$-generic condition. By elementarity, $p \Vdash ``\dot{f}(\check \alpha) = \check \beta$ for some $\beta \in M$, so the same information is forced by $q''$.\end{proof}

\begin{definition} Suppose $\P \in M \prec K$ for $K$ sufficiently rich. We say that $M$ \emph{has generic conditions for $\P$ (resp$.$ strongly generic conditions)} if for all $p \in \P \cap M$, there is some $q \le p$ such that $q$ is $(M,\P)$-generic (resp$.$ strongly $(M,\P)$-generic).\end{definition}

\begin{proposition} Let $M \prec K$ be a basic elementary submodel of a sufficiently rich structure such that $M$ has generic conditions for $\P$. Then $\P$ preserves $\aleph_2$.\end{proposition}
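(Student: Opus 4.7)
The plan is a proof by contradiction that invokes the key idea of Proposition \ref{master-cond-prop}(3): in any generic extension by a filter containing an $(M,\P)$-generic condition, the values of a name $\dot f \in M$ for a function $\check\kappa_1 \to \check\kappa_2$ lie in $M$ whenever $\kappa_1 \subseteq M$ and $\kappa_2 \in M$. Suppose toward contradiction that $\P$ collapses $\aleph_2$. Then in any $\P$-extension $V[G]$, the ordinal $\aleph_2^V$ has cardinality at most $\aleph_1$, so there is a cofinal function $\aleph_1 \to \aleph_2$ in $V[G]$. Hence there exist $p \in \P$ and a $\P$-name $\dot h$ with $p \Vdash_\P \dot h : \check{\aleph}_1 \to \check{\aleph}_2$ cofinal. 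Since this assertion is one that $K$ can formulate (it contains $\P$ and its antichains) and it holds there, by elementarity of $M \prec K$ we may take $p, \dot h \in M$.

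Next observe that $\aleph_1 \subseteq M$: the clause $M \cap \aleph_2 \in \aleph_2$ says $M \cap \aleph_2$ equals some ordinal $\delta < \aleph_2$ (and so is downward closed), and since $\aleph_1 \in M$ with $\aleph_1 < \aleph_2$ we get $\aleph_1 < \delta$, whence $\aleph_1 \subseteq \delta \subseteq M$. By the assumption that $M$ has generic conditions for $\P$, pick $q \le p$ that is $(M,\P)$-generic and let $G$ be a $\P$-generic filter containing $q$.

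Now apply the argument of Proposition \ref{master-cond-prop}(3) to $\dot h$ over the domain $\aleph_1 \subseteq M$: for each $\alpha \in \aleph_1$, elementarity in $K$ supplies a maximal antichain $A_\alpha \in M$ of conditions deciding $\dot h(\check \alpha)$, and $(M,\P)$-genericity of $q$ yields some $p' \in A_\alpha \cap M \cap G$; by elementarity the ordinal $p'$ decides lies in $M$. Thus $\range(\dot h[G]) \subseteq M \cap \aleph_2 = \delta < \aleph_2$, contradicting cofinality of $\dot h[G]$ in $\aleph_2$. The only technical nuance is that Proposition \ref{master-cond-prop}(3) is phrased with $\Vdash_\P$ rather than $q \Vdash$, but its proof transfers verbatim since all the action takes place below $q$.
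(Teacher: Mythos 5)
Your proof is correct and follows essentially the same route as the paper: take a name in $M$ for a supposed collapse, note $\aleph_1 \subseteq M$ and $\aleph_2 \in M$, and apply the mechanism of Proposition~\ref{master-cond-prop}(3) below an $(M,\P)$-generic condition to trap the range inside $M \cap \aleph_2$. The one small variation is cosmetic: you derive $\aleph_1 \subseteq M$ from the clause $M \cap \aleph_2 \in \aleph_2$ (together with $\aleph_1 \in M$), whereas the paper reads it off from $M^\omega \subseteq M$; both facts hold for basic models and either route works.
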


\begin{proof} Working in $V$, we have $\aleph_2 \in M$ by elementarity and $\aleph_1 \subset M$ by $M^\omega \subset M$. $M$ would contain a $\P$-name $\dot f$ for a supposed collapsing function $f:\aleph_1^V \to \aleph_2^V$ by elementarity. Apply \autoref{master-cond-prop}.\end{proof}

\section{Defining the Main Poset}\label{sec2}


The goal of this section is to introduce our poset for adding a $\square(\aleph_2,\aleph_0)$-sequence with countable conditions.



\begin{definition}\label{newforcing} Let $\mathfrak{S}$ be the set of $(\pair{\alpha}{x}, \intv{\beta}{\gamma})$ such that $\alpha < \aleph_2$ is a limit ordinal, $x \in \omega$, and $\intv{\beta}{\gamma}$ is a closed interval of ordinals such that $-1 \le \beta \le \gamma < \alpha$.\footnote{We are abusing the term ``ordinal'' in the case that $\beta = -1$.} Define $\S$ to be the set of $s \subset \mathfrak{S}$ such that the following hold:

\begin{enumerate}

\item\label{cond-size} The set $s$ is countable.


\item\label{cond-disjointness} If $(\pair{\alpha}{x},\intv{\beta}{\gamma})$, $(\pair{\alpha}{x},\intv{\beta'}{\gamma'}) \in s$, then either $\beta = \beta'$ or $\intv{\beta}{\gamma} \cap \intv{\beta'}{\gamma'} = \emptyset$.

\item\label{cond-oneomega1club} If $\cf(\alpha) = \omega_1$, then for all $x,y \in \omega$ and $\beta,\gamma<\alpha$, $(\pair{\alpha}{x},\intv{\beta}{\gamma}) \in s$ if and only if $(\pair{\alpha}{y},\intv{\beta}{\gamma}) \in s$.

\item\label{cond-pointclosure} If $\{ (\pair{\alpha}{x} , \intv{\beta_i}{\gamma_i}) : i < \omega \} \subseteq s$ and $\beta^*=\sup_{i<\omega}\beta_i < \alpha$, then
it follows that $(\pair{\alpha}{x} , \intv{\beta^*}{\gamma^*}) \in s $ for some $\gamma^*$.

\item\label{cond-domainclosure} If $\{(\pair{\alpha_i}{x_i},\intv{\beta_i}{\gamma_i}): i<\omega \} \subset s$ and $\alpha^* = \sup_{i<\omega}\alpha_i$, then it follows that $(\pair{\alpha^*}{y},\intv{\beta}{\gamma}) \in s$ for some $y,\beta,\gamma$.

\item\label{cond-unboundedness} If $(\pair{\alpha}{x},\intv{\bar \beta}{\bar \gamma}) \in s$ and $\cf(\alpha) = \omega$, then $\{\beta<\alpha: \exists \gamma, (\pair{\alpha}{x},\intv{\beta}{\gamma}) \in s \}$ is unbounded in $\alpha$.




\item\label{cond-decisiveness} If $(\pair{\alpha}{x},\intv{\beta}{\gamma}) \in s$ and $\cf(\beta)=\omega$, then one of the following holds:

\begin{enumerate}
\item there exists a sequence of ordinals $\seq{\beta_i}{i<\omega}$ converging to $\beta$ such that $\forall i < \omega, \exists \gamma_i,(\pair{\alpha}{x},\intv{\beta_i}{\gamma_i}) \in s$.
\item there is some $\bar \beta < \beta$ and some sequence of ordinals $\seq{\gamma_i}{i<\omega}$ converging to $\beta$ such that $\forall i < \omega$, $(\pair{\alpha}{x},\intv{\bar \beta}{\gamma_i}) \in s$.
\end{enumerate}

%

\item\label{cond-coherence} Suppose that $(\pair{\alpha}{x},\intv{\beta}{\gamma}) \in s$ and either:
\begin{itemize}
\item $\cf(\beta) = \omega_1$ or else 
\item $\cf(\beta)=\omega$ and $\{\beta'<\beta: \exists \gamma',(\pair{\alpha}{x},\intv{\beta'}{\gamma'}) \in s\}$ is unbounded in $\beta$.
\end{itemize}
Then there is some $y \in \omega$ such that for all $\beta',\gamma' < \beta$, $(\pair{\alpha}{x},\intv{\beta'}{\gamma'}) \in s$ if and only if $(\pair{\beta}{y},\intv{\beta'}{\gamma'}) \in s$.

\end{enumerate}


Suppose $s \in \S$, $\beta,\alpha < \aleph_2$, and $x,y \in \omega$. We write that $\Coh(s,\alpha,x,\beta,y)$ holds if both of the following hold:

\begin{enumerate}[(i)]
\item $(\pair{\alpha}{x},\intv{\beta}{\gamma}) \in s$ and either
\begin{itemize}
\item $\cf(\beta) = \omega_1$ or else
\item $\cf(\beta)=\omega$ and the set $\{\beta'<\beta: \exists \gamma',(\pair{\alpha}{x},\intv{\beta'}{\gamma'}) \in s\}$ is unbounded in $\beta$;
\end{itemize}
\item for all $\beta' \le \gamma'< \beta$, $(\pair{\alpha}{x},\intv{\beta'}{\gamma'}) \in s$ if and only if $(\pair{\beta}{y},\intv{\beta'}{\gamma'}) \in s$.
\end{enumerate}


The ordering is defined so that $s' \le_\S s$ holds if and only if:

\begin{enumerate}
\item $s' \supseteq s$.
\item If $\Coh(s,\alpha,x,\beta,y)$ holds then $\Coh(s',\alpha,x,\beta,y)$ holds.


\end{enumerate}
\end{definition}

We will also use the following conventions:

\begin{itemize}

\item An element of $\mathfrak S$ is called a \emph{bit}.

\item If $s \in \S$, the set $\{\alpha<\aleph_2: \exists x,\beta,\gamma, (\pair{\alpha}{x},\intv{\beta}{\gamma}) \in s\}$ is called the \emph{domain} of $s$ and is abbreviated $\dom(s)$.

\item If $s \in \S$, then $\dom(s)$ has a maximal element $\gamma$, which will be denoted $\max(s)$.

\item For $\pair{\alpha}{x} \in \Lim(\aleph_2) \times \omega$, we let
\[
\dot C_\alpha^x := \{(\check \beta,s):\beta \ge 0,\exists \gamma, \bit{\alpha}{x}{\beta}{\gamma} \in s \in \S \}.
\]

\item  We sometimes use the terminology of coherence before we have verified that some $s \subset \mathfrak{S}$ is in fact a condition in $\S$. The second point of the definition of $\le_\S$ states that $s' \le_\S s$ means that if $s$ forces that $y$ witnesses coherence of $\dot{C}^x_\alpha$ at $\beta$, then so does $s'$.


\item If $\delta \in (\aleph_2 \cap \cof(\omega_1)) \cup \{\aleph_2\}$ and $s \in \S$, we write $s \nrest \delta := \{(\pair{\alpha}{x},\intv{\beta}{\gamma}) \in s: \alpha < \delta \}$, noting that $s \nrest \delta \in \S$.


\item For $s \in \S$, $\out s = \{\beta \ge 0: \exists \alpha, x, \gamma, \bit{\alpha}{x}{\beta}{\gamma} \in s, s \not\Vdash ``\ot(\dot{C}_\alpha^x) = \omega"\}$.


\end{itemize}

We can describe an intuition for \autoref{newforcing} here. If $(\pair{\alpha}{x},\intv{\beta}{\gamma}) \in s \in \S$ and $\beta \ge 0$, then this indicates that $s$ forces $\beta$ to be a point in $\dot{C}_\alpha^x$ and also forces $``\dot{C}_\alpha^x \cap \interval[open left]{\beta}{\gamma} = \emptyset"$. Note that $x$ is just a placeholder label here. \autoref{cond-size} and \autoref{cond-disjointness} should be thought of in association with Abraham's presentation of Baumgartner's forcing for adding a club in $\aleph_1$ with finite conditions \cite{Abraham-Shelah1983}, but generalized to $\aleph_2$. \autoref{cond-oneomega1club} asserts that if $\cf(\alpha) = \omega_1$, then the set of $\dot{C}_\alpha^x$ for $x \in \omega$ is in fact a singleton. \autoref{cond-pointclosure} ensures that the clubs in the $\square(\aleph_2,\aleph_0)$-sequence are closed. \autoref{cond-domainclosure} ensures that we will be able to extend any condition. \autoref{cond-unboundedness} ensures that the clubs in ordinals $\alpha$ of countable cofinality are unbounded. \autoref{cond-decisiveness} in part ensures that the question of whether $\beta$ is a limit point of its club is determined by the condition, so that in closure arguments we only need to deal with freshly added limits of $\beta$'s. Finally, \autoref{cond-coherence} ensures that the generic object added by $\S$ is a coherent sequence. The second point of the definition of $\le_\S$ will be used in the proof of \autoref{closure} below.


The next steps are to establish the facts about $\S$ that do not require countable closure.

\begin{proposition}\label{basiclimit} Given $s \in \S$, $\alpha \in \dom s$, and $x \in \omega$, the following are equivalent:

\begin{enumerate}

\item $s \Vdash ``\beta \in \lim \dot{C}_\alpha^x"$.

\item Either:

\begin{enumerate}[(a)]

\item $\cf(\beta) = \omega_1$ and $\bit{\alpha}{x}{\beta}{\gamma} \in s$ for some $\gamma$, or else

\item there are sequences $\seq{\beta_i}{i<\omega}$ and $\seq{\gamma_i}{i<\omega}$ converging to $\beta$ such that $\bit{\alpha}{x}{\beta_i}{\gamma_i} \in s$ for all $i<\omega$.

\end{enumerate}
\end{enumerate}
\end{proposition}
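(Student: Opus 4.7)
The plan is to unpack both directions by examining the clauses of \autoref{newforcing}.

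For $(2b)\Rightarrow(1)$: each bit $\bit{\alpha}{x}{\beta_i}{\gamma_i}\in s$ already forces $\beta_i \in \dot C^x_\alpha$, so the $\beta_i$'s sit inside $\dot C^x_\alpha \cap \beta$ and converge to $\beta$. Point-closure (clause \ref{cond-pointclosure}) applied to this $\omega$-sequence with $\beta^* = \sup_{i<\omega}(\beta_i+1) = \beta < \alpha$ then furnishes a witnessing bit $\bit{\alpha}{x}{\beta}{\gamma^*} \in s$, so $\beta \in \dot C^x_\alpha$ is also forced, and hence $s \Vdash ``\beta \in \lim \dot C^x_\alpha"$.

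For $(2a)\Rightarrow(1)$: since $\cf(\beta) = \omega_1$, clause \ref{cond-coherence} applied to $\bit{\alpha}{x}{\beta}{\gamma} \in s$ yields some $y\in\omega$ witnessing coherence of $\dot C^x_\alpha$ at $\beta$, and the second clause of $\le_\S$ preserves this coherence in every $s'\le_\S s$. A density argument below $s$ then produces, for every $s' \le s$ and $\eta < \beta$, an extension $s''\le s'$ containing a bit $\bit{\alpha}{x}{\delta}{\delta}$ with $\delta\in(\eta,\beta)$: since $s'$ is countable and $\cf(\beta)=\omega_1$, the existing left-endpoints for the pair $(\alpha,x)$ in $s'$ are bounded below $\beta$, so one may pick $\delta$ well above $\eta$ and above the sup of these endpoints, and adjoin a small auxiliary family of bits implementing decisiveness (clause \ref{cond-decisiveness}) at $\delta$, together with the coherence partners on the $(\beta,y)$-side.

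For $(1)\Rightarrow(2)$: our convention $\lim(X)\subseteq X$ gives $s\Vdash\beta\in\dot C^x_\alpha$. A short density argument establishes that $s$ itself must contain a bit $\bit{\alpha}{x}{\beta}{\gamma}$ for some $\gamma$ (otherwise one could extend $s$ by a fresh disjoint-interval bit containing $\beta$ in its interior, forcing $\beta \notin \dot C^x_\alpha$ and contradicting the hypothesis). Clause \ref{cond-decisiveness} then places $\beta$ into one of four cases: case (b) is incompatible with $\beta$ being a limit ordinal (which it must be since $\beta \in \lim(\cdot)$); case (d) forces $\dot C^x_\alpha \cap (\bar\beta,\beta)=\emptyset$, which would bound $\dot C^x_\alpha\cap\beta$ below and contradict unboundedness; and the remaining cases (a) and (c) are exactly (2a) and (2b).

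The main obstacle is orchestrating the bits in the density step of $(2a)\Rightarrow(1)$ so that all nine clauses of $\S$---especially decisiveness (\ref{cond-decisiveness}) at each new left endpoint and the completeness requirement (\ref{cond-completeness})---are simultaneously satisfied. Since $s'$ is countable, $\cf(\beta)=\omega_1$, and only countably many auxiliary bits are needed, there is ample room below $\beta$ to choose a suitable $\delta$ and fill in whatever supporting bits are required; the coherence-preservation clause of $\le_\S$ then ensures the new condition really lies below $s'$.
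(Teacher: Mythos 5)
Your proof is essentially correct and runs along the same lines as the paper's: both directions are handled by density arguments combined with inspection of the clauses of \autoref{newforcing}, and the hardest case in both accounts is $(2a)\Rightarrow(1)$. A few comparative remarks are worth making.

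For $(1)\Rightarrow(2)$, the paper argues the contrapositive directly by constructing, from the failure of (2), an explicit extension $s'$ which forces $\beta \notin \lim\dot C^x_\alpha$. You instead first force $\bit{\alpha}{x}{\beta}{\gamma}\in s$ via a density argument, then classify by \autoref{cond-decisiveness} and rule out cases (b) and (d). This is logically equivalent but organizes the argument more cleanly around the built-in ``decisiveness'' structure: once you know a bit at $\beta$ is present, \autoref{cond-decisiveness} tells you exactly which of four situations obtains, and (b) and (d) are easily dismissed. One small thing to spell out in case (c): you need $\gamma_i \to \beta$, not just $\beta_i\to\beta$, and this follows because $\beta_i \le \gamma_i < \beta$ forced by \autoref{cond-disjointness} together with $\bit{\alpha}{x}{\beta}{\gamma}\in s$.

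For $(2a)\Rightarrow(1)$, your observation that the bookkeeping must be compatible with the second clause of $\le_\S$ (coherence preservation) and with \autoref{cond-coherence} at $\beta$ itself is correct and is a point the paper passes over quickly: when one adds bits for the pair $\pair{\alpha}{x}$ below $\beta$, one must simultaneously add the matching bits for the pair $\pair{\beta}{y}$ where $y$ is the coherence witness, and also the $\cf(\alpha)=\omega_1$ duplicate bits if relevant. You also correctly identify that one should choose $\delta$ above the (bounded) set of existing left endpoints and take care of \autoref{cond-decisiveness} at the fresh left endpoint; picking $\delta$ a successor sidesteps the more delicate coherence cases there. Neither account fully writes out the auxiliary-bit bookkeeping, but what you describe is the right construction, and your attention to the coherence partners is if anything more careful than the paper's sketch.
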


\begin{proof} \emph{(2)$\then$(1)}: The implication is clear if (b) holds, so assume that (a) holds.

We consider the case $\cf(\alpha) = \omega$. Suppose $s' \le s$ (we are now suppressing the notation $\le_\S$) and $\bar \beta < \beta$, where we want to show that there is some $s'' \le s'$ such that $s'' \Vdash ``\interval[open right]{\bar \beta}{\beta} \cap \dot{C}_\alpha^x \ne \emptyset"$.

Let
\[
\beta^* = \sup\{\beta'<\beta:\exists \gamma', \bit{\alpha}{x}{\beta'}{\gamma'} \in s'\},
\]
and
\[
\gamma^* = \sup\{\gamma'<\beta:\exists \beta', \bit{\alpha}{x}{\beta'}{\gamma'} \in s'\}.
\]
so $\beta^* \le \gamma^* < \beta$ by the fact that $s'$ is countable and $\cf(\beta) = \omega_1$. Moreover, $\beta^* \in \dom (s')$ by \autoref{cond-pointclosure} and \autoref{cond-coherence} together.

If $\bar \beta \le \beta^*$ then we are done. Suppose otherwise. Since our goal is to obtain $s''$ such that $s'' \Vdash ``\interval[open right]{\bar \beta}{\beta} \cap \dot{C}_\alpha^x \ne \emptyset"$, we can assume a larger value of $\bar{\beta}$ without loss of generality. Therefore, we assume that $\bar{\beta} \ge \gamma^*$. Then we let
\begin{align*}
s'' = & s' \cup  \{\bit{\alpha}{x}{\bar{\beta}+1}{\bar{\beta}+1} \} \\
& \{\bit{\tilde{\alpha}}{y}{\bar{\beta}+1}{\bar{\beta}+1}): \Coh(s',\tilde{\alpha},y,\alpha,x) \textup{ holds}\}.
\end{align*}



Let us argue that $s''$ is a condition by going through the clauses.

\autoref{cond-size}: This is immediate since we are only adding countably many bits.

 \autoref{cond-disjointness}: We do not need to consider bits of the form $(\pair{\alpha'}{y},\intv{\beta'}{\gamma'})$ if $\alpha' \ne \alpha$. We also do not need to consider pairs of bits that are taken from $s'$. The clause clearly holds if both bits under consideration are the newly-added ones. The remaining cases are where one bit takes the form $(\pair{\alpha}{x},\intv{\beta'}{\gamma'}) \in s'$ and the other bit is one of the new ones, so the clause holds because we know that $\beta^* \le \gamma^* \le \bar{\beta}$.

\autoref{cond-oneomega1club}: This holds vacuously since $\cf(\alpha)=\omega$.

\autoref{cond-pointclosure}: Observe that if $\{\bit{\alpha'}{x'}{\beta_i}{\gamma_i}:i<\omega\} \subseteq s''$, then we can verify the clause by considering each $\pair{\alpha'}{x'}$ as a sub-case and noting that we have added finitely many bits for each such sub-case.


\autoref{cond-domainclosure}: This holds for the same reason as \autoref{cond-pointclosure}.

\autoref{cond-unboundedness}: This holds because $s'$ is a condition and
\[
\{\pair{\alpha}{x}:\exists \beta,\gamma,(\pair{\alpha}{x},\intv{\beta}{\gamma}) \in s'\} = \{\pair{\alpha}{x}:\exists \beta,\gamma,(\pair{\alpha}{x},\intv{\beta}{\gamma}) \in s''\}.
\]

\autoref{cond-decisiveness}: The notable case is that in which we consider a bit of the form $(\pair{\alpha'}{x'},\intv{\beta'}{\gamma'}) \in s''$ and $\beta' = \bar{\beta}+1$, in which case the clause holds vacuously. The other cases follow from the fact that $s'$ is a condition.

\autoref{cond-coherence}: This clause holds because we dealt with all $\alpha',x'$ for which $\Col(s',\tilde{\alpha},y,\alpha,x)$ holds, and we did not add new instances in which the hypothesis of \autoref{cond-coherence} holds.


Given that $s''$ is a condition, it is clear that $s'' \Vdash ``\interval[open right]{\bar \beta}{\beta} \cap \dot{C}_\alpha^x \ne \emptyset"$.

We also need to have $s'' \le_\S s'$. We have $s'' \supseteq s'$, so we then need to consider the coherence clause for $\le_\S$. This holds for reasons similar to the fact that \autoref{cond-coherence} holds in that no bits have been added where coherence needs to be verified.

The only substantive difference in the case where $\cf(\alpha)=\omega_1$ is for Clause 3, which holds because it is explicitly taken care of for all $y \in \omega$. Using the notation from the other case, we use
\begin{align*}
s'' = & s' \cup \{\bit{\alpha}{x}{\bar{\beta}+1}{\bar{\beta}+1} :x \in \omega\} \\
& \{\bit{\tilde{\alpha}}{y}{\bar{\beta}+1}{\bar{\beta}+1}): x \in \omega, \Coh(s',\tilde{\alpha},y,\alpha,x) \textup{ holds}\}.
\end{align*}

The argument that $s'' \le_\S s'$ is also analogous in this case.

\emph{(1)$\then$(2)}: Assume that $\beta$ is a limit ordinal and that \emph{(2)} does not hold. First suppose that $\cf(\alpha) = \omega$. Let
\[
\beta^* = \sup\{\beta':\beta'<\beta:\exists \gamma', \bit{\alpha}{x}{\beta'}{\gamma'} \in s'\}
\]
if this set is not empty, otherwise $\beta^* = 0$. Then the assumption that \emph{(2)} does not hold implies that $\beta^*<\beta$, either by \autoref{cond-pointclosure} or by $\cf(\beta)=\omega_1$.

If there is no $\gamma$ such that $\bit{\alpha}{x}{\beta}{\gamma} \in s$, then we let 
\[
s' = s \cup \{ \bit{\alpha}{x}{\beta^*}{\beta} \} \cup \{\bit{\tilde{\alpha}}{y}{\beta^*}{\beta}:\Coh(s',\tilde{\alpha},y,\alpha,x) \textup{ holds}\}.
\]

The argument that $s'$ is a condition and that $s' \le_\S s$ is similar to the backwards direction of the proof of this proposition, but it is strictly easier since we are only adding one bit. Then we see that $s' \Vdash `` \dot{C}^x_\alpha \cap \interval[open left]{\beta^*}{\beta} = \emptyset$'' and hence $s' \Vdash ``\beta \notin \lim \dot{C}^x_\alpha$''.


If there is some $\gamma$ such that $\bit{\alpha}{x}{\beta}{\gamma} \in s$, then $\cf(\beta)=\omega$ (since we are assuming \emph{(2)} does not hold). Then case (b) of \autoref{cond-decisiveness} holds, so there is some $\bar{\beta}<\beta$ and a sequence of $\gamma_i$'s such that $(\pair{\alpha}{x},\intv{\bar{\beta}}{\gamma_i}) \in s$ for all $i<\omega$. Hence $s \Vdash ``\dot{C}^x_\alpha \cap (\bar{\beta},\beta) = \emptyset$'' and therefore $s \Vdash ``\beta \notin \lim \dot{C}^x_\alpha$''.

An analogous argument applies for the case that $\cf(\alpha)=\omega_1$, i.e$.$ in the sense that we need only alter the argument to ensure that \autoref{cond-oneomega1club} holds.\end{proof}




\begin{proposition}\label{objects-are-clubs} The name $\dot{C}_\alpha^x$ is forced to be a closed unbounded set in $\alpha$ for all limit ordinals $\alpha<\aleph_2$ and all $x<\omega$.\end{proposition}

\begin{proof} First we show that the $\dot{C}_\alpha^x$'s are forced to be closed. (We will prove nonemptiness when we prove unboundedness.) If there are $\beta'$ and $\gamma'$ with $(\pair{\alpha}{x},\intv{\beta'}{\gamma'}) \in \bar{s}$, then $\bar{s}$ forces $\dot{C}^x_\alpha$ to be closed under countable sequences by \autoref{cond-pointclosure}. Closure under uncountable sequences follows from \autoref{basiclimit} since if $s \in \S$ forces $\beta \notin \lim \dot{C}_\alpha^x$ for some $\beta$ with $\cf(\beta)=\omega_1$, then $s$ forces $\beta \notin \dot{C}_\alpha^x$.

Most of the proof then consists of showing that the $\dot{C}_\alpha^x$'s are forced to be unbounded and in particular nonempty.

Consider the case of $\alpha$ with $\cf(\alpha)= \omega$.
 Fix some $\bar{s} \in \S$. If there are $\beta$ and $\gamma$ with $(\pair{\alpha}{x},\intv{\beta}{\gamma}) \in \bar{s}$, then $\bar{s}$ forces $\dot{C}^x_\alpha$ to be unbounded by \autoref{cond-unboundedness}. 

Now suppose there are no $\beta$ and $\gamma$ with $(\pair{\alpha}{x},\intv{\beta}{\gamma}) \in \bar{s}$. Take a sequence $\seq{\delta_i}{i<\omega}$ converging to $\alpha$ where $\delta_0 = 0$ and $\delta_i$ is a successor ordinal for $i>0$. Then let $s:= \bar{s} \cup \{(\pair{\alpha}{x},\intv{\delta_i}{\delta_{i+1}-1}): x \in X, i<\omega\}$. As long as we show that $s$ is a condition such that $s \le_\S \bar{s}$, it is clear that $s$ forces $\dot{C}_\alpha^x$ to be unbounded and (vacuously) closed in $\alpha$.

We argue that $s$ is a condition, going through the less trivial clauses:

\autoref{cond-disjointness}: The intervals $\intv{\delta_i}{\delta_{i+1}-1}$ are pairwise disjoint.

\autoref{cond-pointclosure}: This closure holds vacuously when the new bits are considered.

\autoref{cond-unboundedness}: The $\delta_i$'s are unbounded in $\alpha$.

\autoref{cond-coherence}: The only notable case is where $\bar{s} \Vdash ``\alpha \in \lim \dot{C}_{\alpha'}^{x'}$'' for some $\pair{\alpha'}{x'}$, but in this case we would already have $\Coh(\bar{s},\alpha',x',{\color{blue}\alpha},y)$ for some $y<\omega$.


The fact that $s' \le_\S s$ is quite immediate given the argument for \autoref{cond-coherence}.

Now we consider the case of $\alpha$ with $\cf(\alpha) = \omega_1$.

For the case where $\cf(\alpha)=\omega_1$. Fix $x<\omega$ and $\bar{\beta}<\alpha$ and some $\bar{s} \in \S$ such that for some $\beta',\gamma'$, $(\pair{\alpha}{x},\intv{\beta'}{\gamma'}) \in \bar{s}$.

Let
\[
\beta^* = \sup\{\beta'<\alpha:\exists \gamma', \bit{\alpha}{x}{\beta'}{\gamma'} \in \bar{s}\},
\]
so $\beta^* < \alpha$ by the fact that $s'$ is countable and $\cf(\beta) = \omega_1$. If $\beta^* \ge \bar{\beta}$ then we are done. (We can also allow the set to be empty and for $\beta^*$ to be $0$.) Otherwise, we let
\[
\gamma^* = \sup\{\gamma'<\alpha:\exists \beta', \bit{\alpha}{x}{\beta'}{\gamma'} \in \bar{s}\},
\]
and we assume without loss of generality that $\gamma^* \le \bar{\beta}$.

Now let
\begin{align*}
s = & \bar{s} \cup \{\bit{\alpha}{y}{\bar{\beta}+1}{\bar{\beta}+1}:y<\omega \} \cup \\ & \{\bit{\tilde{\alpha}}{\tilde{x}}{\bar{\beta}+1}{\bar{\beta}+1}:\Coh(\bar{s},\tilde{\alpha},\tilde{x},\alpha,y) \textup { holds}\}.
\end{align*}

The proof that $s$ is a condition such that $s \le_\S \bar{s}$ is very similar to the first part of the proof of \autoref{basiclimit}. From there it is immediate that $s \Vdash ``\dot{C}_\alpha^x \cap (\bar{\beta},\alpha) \ne \emptyset$''.\end{proof}







Now we will work towards proving that a dense subset of $\S$ is countably closed. First we define the subset:


\begin{definition} We say that $s \in \S$ is \emph{complete} if the following holds: If $(\pair{\alpha}{x},\intv{\beta}{\gamma}) \in s$ and there are $\alpha'<\alpha$ and some $x' \in \omega$ such that $(\pair{\alpha'}{x'},\intv{\beta'}{\gamma'}) \in s$, then:

\begin{itemize}
\item there are $\beta'' \le \alpha'$ and $\gamma'' \ge \alpha'$ such that $(\pair{\alpha}{x},\intv{\beta''}{\gamma''}) \in s$,
\item there are $\beta''' \le \beta'$ and $\gamma''' \ge \beta'$ such that $(\pair{\alpha}{x},\intv{\beta'''}{\gamma'''}) \in s$.
\end{itemize}
In other words $s$ decides $\alpha' \in \dot{C}_\alpha^x$ and $\beta' \in \dot{C}_\alpha^x$ either positively or negatively.
\end{definition}

\begin{proposition} The set of complete conditions is dense in $\S$. More precisely, if $\bar{s} \in \S$, then there is a complete $s \le_\S \bar{s}$ such that $\dom{s} = \dom{\bar{s}}$.\end{proposition}

\begin{proof} Fix $\bar s \in \S$. For all $\alpha \in \dom \bar{s}$ and $x<\omega$, define
\[
B = \{ \beta : \exists \alpha,x, \gamma, (\pair{\alpha}{x},\intv{\beta}{\gamma}) \in \bar{s}\}\textup{ and }A = \dom \bar{s}.
\]
and
\[
B_{\pair{\alpha}{x}} = \{ \beta : \exists \gamma, (\pair{\alpha}{x},\intv{\beta}{\gamma}) \in \bar{s}\}.
\]
If $\gamma \in A \cup B$ and $\gamma<\alpha$, let $\beta^\gamma_{\pair{\alpha}{x}} = \sup(B_{\pair{\alpha}{x}} \cap \gamma)$.  (In particular, there is some $\gamma'$ such that $\bit{\alpha}{x}{\beta^\gamma_{\pair{\alpha}{x}}}{\gamma'} \in \bar{s}$ and there are no $\tilde{\beta},\tilde{\gamma}$'s with $\beta^\gamma_{\pair{\alpha}{x}} < \tilde{\beta} \le \gamma$ and $\bit{\alpha}{x}{\tilde{\beta}}{\tilde{\gamma}} \in \bar{s}$. It is also possible that $\beta^\gamma_{\pair{\alpha}{x}}=\gamma$.)

Let
\begin{align*}
s = \bar{s} \cup \{\bit{\alpha}{x}{\beta^\gamma_{\pair{\alpha}{x}}}{\gamma}: & \exists \beta',\gamma',(\pair{\alpha}{x},\intv{\beta'}{\gamma'}) \in \bar{s},\\ & \gamma \in A \cup B, \gamma<\alpha,\alpha \in \dom(\bar{s})\}.
\end{align*}

We can see that $s$ is complete by construction. To the extent that we are adding new information, we are only forcing ordinals not to be in the $\dot{C}^x_\alpha$'s. Now we verify that $s$ is a condition.

\autoref{cond-size}: Immediate.

\autoref{cond-disjointness}: Fix $\alpha$ and $x$ for consideration. Suppose $(\pair{\alpha}{x},\intv{\beta}{\gamma}) \in s$ and $(\pair{\alpha}{x},\intv{\beta'}{\gamma'}) \in s$. If they are both in $\bar{s}$ then we are done. If they are both in $s \setminus \bar{s}$ with $\beta \le \beta'$, then by definition there are no points in $B_{\pair{\alpha}{x}}$ between $\beta$ and $\gamma$, so either $\beta = \beta'$ or $\gamma < \beta'$. If without loss of generality the first of these bits is in $\bar{s}$ and the other is in $s \setminus \bar{s}$ with $\beta \le \beta'$, then we know that there are no elements of $B_{\pair{\alpha}{x}}$ in the interval $\interval[open left]{\beta}{\gamma}$, so either $\beta = \beta'$ or $\gamma < \beta'$. If we assume that $\beta' \le \beta$ the argument is similar.

\autoref{cond-oneomega1club}: This property is inherited by $s$ from $\bar{s}$ because the same changes are made to every $\dot{C}_\alpha^x$ for each $x<\omega$.

\autoref{cond-pointclosure}: We have for all $\alpha$ and $x$ that
\[
\{\beta:\exists \gamma,(\pair{\alpha}{x},\intv{\beta}{\gamma}) \in s \} = \{\beta:\exists \gamma,(\pair{\alpha}{x},\intv{\beta}{\gamma}) \in \bar{s}\}
\]
and therefore the fact that \autoref{cond-pointclosure} holds for $s$ follows from the fact that it holds for $\bar{s}$.

\autoref{cond-domainclosure}: This holds because $\dom \bar{s} = \dom s$ and moreover the $x$'s for which $\dot{C}_\alpha^x$ is defined are the same.

\autoref{cond-unboundedness}: Holds for the same reason as \autoref{cond-domainclosure}.

\autoref{cond-decisiveness}: Holds for the same reason as \autoref{cond-pointclosure}.

\autoref{cond-coherence}: Fix $\pair{\alpha}{x}$. Suppose first that $\cf(\beta)=\omega$ and that the set $\{\beta':\exists \gamma',(\pair{\alpha}{x},\intv{\beta'}{\gamma'}) \in s\}$ is unbounded in $\beta$. Then it follows that $\{\beta':\exists \gamma',(\pair{\alpha}{x},\intv{\beta'}{\gamma'}) \in \bar{s}\}$ is unbounded in $\beta$ and therefore $\Coh(\bar{s},\alpha,x,\beta,y)$ holds for some $y<\omega$. But if we write
\[
B' = \{ \beta'<\beta :\exists  \gamma', (\pair{\beta}{y},\intv{\beta'}{\gamma'}) \in \bar{s}\}
\]
then $B \cap \beta = B'$. Moreover, $B_{\pair{\alpha}{x}} \cap \beta = B_{\pair{\beta}{y}}$. Therefore, given how the bits in $\bar{s}$ are defined, we have $\Coh(s,\alpha,x,\beta,y)$. The case where $\cf(\beta)=\omega_1$ is analogous.

Finally, see that $s \le_\S \bar{s}$, which follows because clearly $s \supseteq \bar{s}$ and by an argument analogous to the argument for \autoref{cond-coherence}.\end{proof}

The proof that the set of complete conditions in $\S$ is countably closed is similar to the analogous proof for Jensen's method of forcing square sequences in the sense that $\omega$-sequences can vacuously top off lower bounds of countable sequences. However, it is different in the sense that new limit points of the $\dot{C}_\alpha^x$'s are added. Because of this complication, we introduce some additional terminology.

\begin{definition}\label{kindsofpoints} Let $\vec s = \seq{s_i}{i<\omega}$ be an $\le_\S$-decreasing sequence of conditions.

\begin{itemize} 


\item Let $\ed(\vec s) = \bigcup_{i<\omega}\dom(s_i)$ be the \emph{existing domain}.


\item Let $\fd(\vec s) = \limp(\ed(\vec s)) \setminus \ed(\vec s)$ be the \emph{fresh domain}.




\item Let the \emph{existing points} $\ep(\vec s)$ consist of $\beta \in \aleph_2$ such that for some $\alpha \in \ed$, some $x \in \omega$, there is some $i<\omega$ such that $s_i \Vdash \beta \in \lim C^x_\alpha$.



\item Suppose $\beta \in \lim(\aleph_2)$ and $ \alpha \in \ed(\vec s)$ as witnessed by $(\pair{\alpha}{x},\intv{\beta'}{\gamma'})$. Suppose that:
\begin{itemize}
\item $\beta<\alpha$ and there is \emph{no} $i<\omega$ such that $s_i \Vdash \beta \in \lim C^x_\alpha$,
\item there are sequences $\seq{\beta_n}{n<\omega}$ and $\seq{\gamma_n}{n<\omega}$ with supremum $\beta$ such that $\forall n < \omega, (\pair{\alpha}{x},\intv{\beta_n}{\gamma_n}) \in \bigcup_{i<\omega}s_i$.
\end{itemize}
Then we let $\beta \in \fp(\vec s)$, the set of \emph{fresh points}, and we say that \emph{$\pair{\alpha}{x}$ witnesses $\beta \in \fp(\vec s)$}.

\end{itemize}


If $\bar s$ is a lower bound of $\vec s$, then we say that $\bar s$ is \emph{parsimonious} if:

\begin{itemize}

\item $\dom(\bar s) = \ed(\vec s) \cup \fd(\vec s) \cup \fp(\vec s)$;
\item for all $\alpha \in \Lim(\aleph_2)$ and $x \in \omega$, $\bit{\alpha}{x}{\beta}{\gamma} \in \bar s$ implies either $\beta \in \ep(\vec s) \cup \fp(\vec s)$ or $\bar s \Vdash ``\ot(\dot{C}_\alpha^x) = \omega"$. 
\end{itemize}

We drop the notation for $\vec s$ when the context is clear.\end{definition}


\begin{lemma}\label{closure} The set of complete conditions in $\S$ is countably closed. Moreover, every $\le_\S$-decreasing sequence $\seq{s_i}{i<\omega}$ of complete conditions has a parsimonious lower bound.\end{lemma}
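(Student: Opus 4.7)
The plan is to let $s^* := \bigcup_{i<\omega} s_i$ and construct $\bar s$ by adjoining a countable set of new bits closing $s^*$ under the clauses of \autoref{newforcing}. Because $s^*$ is countable, $\ed(\vec s)$ is countable, so every $\alpha \in \fd(\vec s)$ and every $\beta^* \in \fp(\vec s)$ has cofinality $\omega$; this makes \autoref{cond-oneomega1club} vacuous at fresh locations and eliminates the $\cf = \omega_1$ cases of \autoref{cond-decisiveness} and \autoref{cond-coherence} from consideration there.

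I would perform three stages of additions. \emph{Stage 1 (fresh domain):} For each $\alpha \in \fd$, pick a strictly increasing sequence $\seq{\epsilon^\alpha_n}{n<\omega}$ of successor ordinals cofinal in $\alpha$ so that every point of $(\ed \cup \fd \cup \fp) \cap \alpha$ lies in some $\intv{\epsilon^\alpha_n}{\epsilon^\alpha_{n+1}-1}$; for each $y \in \omega$, add $\bit{\alpha}{y}{-1}{\epsilon^\alpha_0-1}$ and $\bit{\alpha}{y}{\epsilon^\alpha_n}{\epsilon^\alpha_{n+1}-1}$ for $n < \omega$. This makes $\dot C_\alpha^y$ an $\omega$-sequence, trivializing \autoref{cond-decisiveness} and \autoref{cond-coherence} at $\pair{\alpha}{y}$. \emph{Stage 2 (fresh points):} For each $\beta^* \in \fp$ and each witnessing pair $(\alpha,x)$, add $\bit{\alpha}{x}{\beta^*}{\beta^*}$ (uniformly across $y$ if $\cf(\alpha) = \omega_1$), pick a fresh label $y^* = y(\beta^*,\alpha,x)$, and add $\bit{\beta^*}{y^*}{\beta'}{\gamma'}$ for every $\bit{\alpha}{x}{\beta'}{\gamma'} \in s^*$ with $\gamma' < \beta^*$. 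This copies $\dot C_\alpha^x \cap \beta^*$ into $\dot C_{\beta^*}^{y^*}$ and witnesses \autoref{cond-coherence} at $\beta^*$. \emph{Stage 3 (completeness):} For each $\alpha \in \ed$, each $x$ with $\alpha \in \dom(s^*)$ via $\pair{\alpha}{x}$, and each $\alpha' \in (\fd \cup \fp) \cap \alpha$ not spanned by any bit at $\pair{\alpha}{x}$ after Stages 1--2, add $\bit{\alpha}{x}{\bar\beta}{\alpha'}$ with $\bar\beta \in \ep \cup \fp$ taken to be the largest point of $\dot C_\alpha^x$ weakly below $\alpha'$.

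Verification of the nine clauses is then largely bookkeeping: \autoref{cond-pointclosure} holds since any $\omega$-accumulation of $\beta$'s at $\pair{\alpha}{x}$ strictly below $\alpha$ is either already in $\ep$ (covered by $s^*$) or in $\fp$ (covered by Stage 2); \autoref{cond-domainclosure} is delivered by Stage 1; \autoref{cond-completeness} by Stages 1 and 3; and \autoref{cond-coherence} at $\pair{\alpha}{x}$, $\beta$ holds because either $\beta \in \ep$ and the witness from some $s_k$ persists, or $\beta \in \fp$ and Stage 2 supplies the witness (both as a witness for the original witnessing pair and, via the copy, for any $\pair{\beta^*}{y^*}$ through which $\beta$ is inherited), or else $\dot C_\alpha^x$ was built as an $\omega$-sequence and the hypothesis is vacuous. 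Parsimony holds because $\dom(\bar s) = \ed \cup \fd \cup \fp$ by construction, and every $\beta$ appearing as a $\beta$-coordinate of a new bit lies in $\ep \cup \fp$ except at freshly defined order-type-$\omega$ clubs (Stage 1). The relation $\bar s \le_\S s_i$ holds because we only add bits, and for any coherence forced by $s_i$ between $\pair{\alpha}{x}$ and $\pair{\beta}{y}$ at $\beta$, the bits below $\beta$ at these indices are untouched.

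The main obstacle is verifying mutual compatibility of the additions with $s^*$ and with each other. For Stage 2, the added $\bit{\alpha}{x}{\beta^*}{\beta^*}$ satisfies \autoref{cond-disjointness} with existing bits because the witnessing $\beta_n$'s approach $\beta^*$ from below and disjointness in each $s_k$ prevents an existing bit-interval from straddling $\beta^*$; the coherent copy at $\pair{\beta^*}{y^*}$ inherits \autoref{cond-pointclosure} and \autoref{cond-decisiveness} from the corresponding bits at $\pair{\alpha}{x}$; and inherited coherence at lower $\beta \in \ep \cup \fp$ inside $\dot C_{\beta^*}^{y^*}$ is witnessed by recycling the labels already assigned. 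For Stage 3, the choice of $\bar\beta$ guarantees that bits at $\pair{\alpha}{x}$ with $\beta' < \bar\beta$ terminate strictly below $\bar\beta$ while those with $\beta' > \bar\beta$ begin strictly above $\alpha'$, so \autoref{cond-disjointness} is preserved and no new limit structure is introduced.
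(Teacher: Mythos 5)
Your proposal follows essentially the same architecture as the paper's proof (classify fresh domain ordinals and fresh limit points, add $\omega$-sequences at new domain ordinals, copy the witnessing club into each fresh point, then seal off for \autoref{cond-completeness}), but it has two concrete gaps. First, Stage 1 ranges over all of $\fd$, but the paper's corresponding set $S_4$ is restricted to $\fd \setminus \fp$, because $\fd$ and $\fp$ are not disjoint in general: a fresh limit of domain ordinals can simultaneously be a fresh limit point of some $\dot{C}_\alpha^x$. At such an $\alpha$, Stage~1 assigns \emph{all} labels $y \in \omega$ to freshly built $\omega$-sequences, and then Stage~2 has no ``fresh label'' $y^*$ left and would place conflicting bits at $\pair{\alpha}{y^*}$, violating \autoref{cond-disjointness}. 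You must restrict Stage~1 to $\fd \setminus \fp$ and let Stage~2 supply the clubs at $\alpha \in \fp$.

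Second, and more substantively, Stage 3 (sealing off) only runs over $\alpha \in \ed$, while \autoref{cond-completeness} must hold for every $\alpha \in \dom(\bar s) = \ed \cup \fd \cup \fp$. For $\beta^* \in \fp$, the bits you install at $\pair{\beta^*}{y^*}$ in Stage~2 are copied \emph{only} from $s^*$, so they span $\ed \cap \beta^*$ but not the fresh ordinals $\alpha' \in (\fd \cup \fp) \cap \beta^*$, and $\bar s$ fails \autoref{cond-completeness} at such $\alpha'$. Compounding this, when Stage~3 adds a spanning bit $\bit{\alpha}{x}{\bar\beta}{\alpha'}$ for $\alpha \in \ed$ with $\alpha' < \beta^* < \alpha$ and $\beta^*$ a fresh point witnessed by $\pair{\alpha}{x}$, that bit is not mirrored at $\pair{\beta^*}{y^*}$, so \autoref{cond-coherence} fails for $\bar s$ at $\beta^*$. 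The paper handles both problems at once by defining the sealing-off set $S_5$ uniformly over all $\alpha \in \ed \cup \fd \cup \fp$ as a function of the sets $B_{\pair{\alpha}{x}}$ of existing $\beta$-coordinates in $\tilde S$; since coherence at $\beta^*$ in $\tilde S$ gives $B_{\pair{\alpha}{x}} \cap \beta^* = B_{\pair{\beta^*}{y^*}}$, the added bits automatically match below $\beta^*$. You need this uniformity: your ad hoc choice of a single bit per unspanned $\alpha'$, applied only to $\alpha \in \ed$, does not guarantee coherence survives to $\bar s$.
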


Before proving the lemma, we establish the basic relationships between the sets of points described above.

\begin{proposition}\label{noninterference} Let $\vec s = \seq{s_i}{i<\omega}$ be an $\le_\S$-decreasing sequence of complete conditions in $\S$.


\begin{enumerate}


\item $\fd(\vec s),\fp(\vec s) \subset \aleph_2 \cap \cof(\omega)$.

\item $\fp(\vec s) \cap \ep(\vec s) = \emptyset$, and if $\beta \in \fp(\vec s)$, then there is no $i<\omega$ such that $s_i \Vdash \beta \notin \lim C_\alpha^x$.

\item $\fp(\vec s) \cap \ed(\vec s) = \emptyset$.

\end{enumerate}

\end{proposition}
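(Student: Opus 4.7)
The plan is to address the two claims separately. For \emph{(1)}, the key observation is that each $s_i$ is countable by \autoref{cond-size}, so $\ed(\vec s) = \bigcup_{i<\omega} \dom(s_i)$ is countable, and in particular $\sup(\ed(\vec s)) < \aleph_2$. Any $\alpha \in \fd(\vec s) = \limp(\ed(\vec s)) \setminus \ed(\vec s)$ is then a limit ordinal below $\aleph_2$ that is approached by a subset of a countable set, so $\cf(\alpha) \le \aleph_0$ and hence $\cf(\alpha) = \omega$. For $\fp(\vec s)$, the definition directly supplies sequences $\seq{\beta_n}{n<\omega}, \seq{\gamma_n}{n<\omega}$ with supremum $\beta$ and $\beta < \alpha < \aleph_2$, so $\beta$ is a limit ordinal of countable cofinality below $\aleph_2$.

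For \emph{(2)}, the argument will be by contradiction. Suppose $\beta \in \fp(\vec s) \cap \ed(\vec s)$. Since $\beta \in \ed(\vec s)$, pick $i$ with $\beta \in \dom(s_i)$, witnessed by some bit $(\pair{\beta}{y_0},\intv{\beta^*}{\gamma^*}) \in s_i$. Since $\beta \in \fp(\vec s)$, there are $\alpha > \beta$ in $\ed(\vec s)$, $x \in \omega$, and sequences $\seq{\beta_n}{n<\omega}, \seq{\gamma_n}{n<\omega}$ with supremum $\beta$ such that each $\bit{\alpha}{x}{\beta_n}{\gamma_n}$ lies in some $s_{k_n}$. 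Since the sequence is $\le_\S$-decreasing, $s_j \supseteq s_i$ whenever $j \ge i$, so setting $j_n = \max(i,k_n)$ places both the witness $(\pair{\beta}{y_0},\intv{\beta^*}{\gamma^*})$ and $\bit{\alpha}{x}{\beta_n}{\gamma_n}$ into $s_{j_n}$. Applying \autoref{cond-completeness} inside $s_{j_n}$ with $\alpha' = \beta$ and $x' = y_0$ then yields $\beta''_n \le \beta \le \gamma''_n$ with $\bit{\alpha}{x}{\beta''_n}{\gamma''_n} \in s_{j_n}$.

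The heart of the argument is to show that $\beta''_n = \beta$. Otherwise $\beta''_n < \beta$, and since $\sup_m \beta_m = \beta$ we can pick $m$ with $\beta_m > \beta''_n$. Setting $k = \max(j_n, j_m)$, both $\bit{\alpha}{x}{\beta''_n}{\gamma''_n}$ and $\bit{\alpha}{x}{\beta_m}{\gamma_m}$ lie in $s_k$, so \autoref{cond-disjointness} forces either $\beta''_n = \beta_m$ or the two intervals to be disjoint. The former fails by the choice $\beta''_n < \beta_m$, and the latter fails because $\beta''_n < \beta_m \le \beta \le \gamma''_n$ places $\beta_m$ in $[\beta''_n,\gamma''_n] \cap [\beta_m,\gamma_m]$. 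Thus $\beta''_n = \beta$, so $\bit{\alpha}{x}{\beta}{\gamma''_n} \in s_{j_n}$, showing $\beta \in \ep(\vec s)$ and contradicting the requirement $\beta \notin \ep(\vec s)$ from the definition of $\fp(\vec s)$. The main obstacle is recognizing that \autoref{cond-completeness} supplies an interval straddling $\beta$; once that is in hand, \autoref{cond-disjointness} against the approximating sequence $\seq{\beta_m}{m<\omega}$ pins the interval's left endpoint exactly at $\beta$.
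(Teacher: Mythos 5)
Your proof is correct and follows essentially the same route as the paper's. Part (1) rests on the countability of the $s_i$'s; for part (2) you, like the paper, apply \autoref{cond-completeness} to produce an interval $\intv{\beta''}{\gamma''}$ with $\beta'' \le \beta \le \gamma''$ and then play it off against the approximating sequence $\seq{\beta_n}{n<\omega}$ via \autoref{cond-disjointness} — the paper leaves that last disjointness step implicit and phrases the contradiction as "no such sequence of $\beta_n$'s can exist," while you spell it out and land on $\beta \in \ep(\vec s)$, but these are the same argument.
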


\begin{proof} \emph{(1)} is immediate.

To prove \emph{(2)}, first note that the first equality follows by definition, since the fresh points are defined to exclude the existing points. Now suppose that $\beta \in \fp(\vec s)$ is witnessed by $\pair{\alpha}{x}$ and $\seq{\beta_n}{n<\omega}$ and $\seq{\gamma_n}{n<\omega}$. Suppose for contradiction that there is some $i<\omega$ such that $s_i \Vdash \beta \notin \lim C_\alpha^x$. Then there are $\gamma_1 < \beta \le \gamma_2$ such that $(\pair{\alpha}{x},\intv{\gamma_1}{\gamma_2}) \in \dom s_i$. Suppose that  $n$ is large enough that $\gamma_1 <\beta_n$. Then this is a contradiction.

The argument that $\fp(\vec s) \cap \ed(\vec s) = \emptyset$ is analogous: Again, suppose that $\beta \in \fp(\vec s)$ is witnessed by $\pair{\alpha}{x}$ and that $\beta \in \ed(\vec s)$. But by completeness of $s_i$ and the fact that $\beta<\alpha$, we have some $\beta'$ and $\gamma'$ such that $(\pair{\alpha}{x},\intv{\beta'}{\gamma'}) \in s_i$ and $\beta' \le \beta \le \gamma'$. But then this contradicts $\beta \in \fp(\vec s)$.\end{proof}


\begin{proof}[Proof of \autoref{closure}] Let $\seq{s_i}{i<\omega}$ be an $\le_\S$-decreasing sequence of complete conditions in $\S$ and fix $\ed$, $\fd$, $\ep$, and $\fp$ as in \autoref{kindsofpoints}. Let $S_0 := \bigcup_{i<\omega}s_i$. For each $\beta \in \fp$, let $X_\beta$ be the set of pairs $\pair{\alpha}{x}$ witnessing that $\beta \in \fp$. We let
\[
S_1 := \{(\pair{\alpha}{x},\intv{\beta}{\beta}): \alpha \in \ed, \beta \in \fp, \pair{\alpha}{x} \in X_\beta\}.
\]
For each $\beta \in \fp$, let $f_\beta:\omega \to X_\beta$ be a surjection. Let
\begin{align*}
S_2 := \{(\pair{\beta}{x},\intv{\beta'}{\gamma'}): & \beta \in \fp, f_\beta(x)=\pair{\alpha}{y} \in X_\beta,\\ & (\pair{\alpha}{y},\intv{\beta'}{\gamma'}) \in S_0, \gamma' < \beta , x \in \omega\}.
\end{align*}
Also let
\[
S_3 := \{(\pair{\beta}{x},\intv{\beta'}{\beta'}): \beta'<\beta, \fp \supseteq \{\beta,\beta' \}, f_\beta(x)=\pair{\alpha}{y} \in X_\beta \cap X_{\beta'} \}.\footnote{This means that if $\pair{\alpha}{x}$ witnesses that both $\beta \in \fp$ and $\beta' \in \fp$ where $\beta'<\beta$, we make sure to put the point $\beta'$ in $\dot{C}_{\beta}^x$ where we will show that $\Coh(s,\alpha,y,\beta,x)$ holds.}
\]
For each $\alpha \in \fd \setminus \fp$, choose a sequence $\seq{\delta^\alpha_n}{n<\omega}$ of ordinals converging to $\alpha$ where $\delta^\alpha_0 = 0$ and $\delta^\alpha_n$ is a successor ordinal for all $n>0$. Let
\[
S_4 := \{(\pair{\alpha}{x},\intv{\delta_n^\alpha}{\delta_{n+1}^\alpha-1}): \alpha \in \fd \setminus \fp, x \in \omega, n < \omega \}.
\]


The lower bound we seek is $s := S_0 \cup S_1 \cup S_2 \cup S_3 \cup S_4$. It is clear that $s$ is parsimonious as long as it is a lower bound. It remains to argue that $s$ is a condition and that $s \le_\S s_i$ for all $i<\omega$.


Assuming that $s$ is in fact a condition, it is relatively straightforward to argue that it is a lower bound: Since we have $s \supseteq s_i$ for all $i<\omega$, the first point in the definition of $\le_\S$ holds. For the second point, suppose that $\Coh(s_i,\alpha,x,\beta,y)$ holds. Then $\alpha,\beta \in \ed$. Since \autoref{cond-coherence} holds for $s_i$ we do not need to worry about bits from $S_0$, for which \autoref{cond-coherence} is witnessed by other bits from $S_0$, and so we only need to consider bits from $S_1$ since this is the only one of the $S_k$'s with bits of the form $(\pair{\alpha^*}{x^*},\intv{\beta^*}{\gamma^*})$ with $\alpha^* \in \ed$. Suppose that $(\pair{\alpha}{x},\intv{\beta'}{\beta'}) \in S_1$ with $\beta'<\beta$, i.e$.$ $\beta' \in \fp \cap \beta$, and $\pair{\alpha}{x} \in X_{\beta'}$. Then $\pair{\beta}{y} \in X_{\beta'}$ as well, so $(\pair{\beta}{y},\intv{\beta'}{\beta'}) \in S_1$. The reverse reasoning in which we start by considering $(\pair{\beta}{y},\intv{\beta'}{\beta'}) \in S_1$ also holds.

For proving that $s$ is a condition, we first observe that if $(\pair{\alpha}{x},\intv{\beta}{\gamma}) \in s$, then exactly one of the following holds: $\alpha \in \ep$, $\alpha \in \fd \setminus \fp$, and $\alpha \in \fp$. This is because it follows from the definitions that either $\alpha \in \ep$ or $\alpha \in \fp$, and we have that $\alpha \in \ep \cap \fp = \emptyset$ by \autoref{noninterference}. Moreover, exactly one of the following holds: $\beta \in \ep$ or $\beta \in \fp$, also by \autoref{noninterference}.


\autoref{cond-size}: This follows because the $S_k$'s are each defined from countably many parameters.

\autoref{cond-disjointness}: Fix $\pair{\alpha}{x}$, $\intv{\beta}{\gamma}
$, and $\intv{\beta'}{\gamma'}$. We consider the possible cases:

\begin{description}[style=unboxed,font=\normalfont,leftmargin=.2cm]

\item[$\alpha \in \ed$, $\beta,\beta' \in \ep$] If $i$ is large enough that $(\pair{\alpha}{x},\intv{\beta}{\gamma})$ and $(\pair{\alpha}{x},\intv{\beta'}{\gamma'})$ are in $s_i$, then this follows from the fact $s_i$ is a condition.


\item[$\alpha \in \ed$, $\beta \in \ep$, $\beta' \in \fp$] Then $\gamma' = \beta'$, so we can assume $\beta<\beta'$ and we want to show that $\gamma < \beta$. If this were not the case, then because of the fact that we have some $i<\omega$ with $(\pair{\alpha}{x},\intv{\beta}{\gamma}) \in s_i$,  and because $\pair{\alpha}{x}$ would witness $\beta' \in \fp$ in this case, it follows that $\beta \le \gamma$ would imply that $\beta' \notin \fp$, a contradiction.


\item[$\alpha \in \ed$, $\beta',\beta \in \fp$] Then $\gamma'=\beta'$ and $\gamma = \beta$, so there is nothing to deal with.

\item[$\alpha \in \fd \setminus \fp$] Then the clause follows from the fact that the intervals $\intv{\delta_n^\alpha}{\delta_{n+1}^\alpha-1}$ are disjoint.

\item [$\alpha \in \fp$, $\beta,\beta' \in \ep$] Then there is some $\tilde{x} \in \omega$ and $\tilde{\alpha} \in \ep$ such that $f_\alpha(x)=\pair{\tilde \alpha}{\tilde{x}}$. Then $(\pair{\alpha}{x},\intv{\beta}{\gamma}),(\pair{\alpha}{x},\intv{\beta'}{\gamma'}) \in S_2$, and so the fact that we have $(\pair{\tilde \alpha}{\tilde{x}},\intv{\beta}{\gamma}), (\pair{\tilde \alpha}{\tilde{x}},\intv{\beta'}{\gamma'}) \in s_i$ for large enough $i$ gives us the clause for this case.

\item[$\alpha \in \fp$, $\beta \in \ep$, $\beta' \in \fp$] We can assume $\beta<\beta'$ since for $\beta \ge \beta'$, $[\beta',\beta']$ is taken. It must be the case that $(\pair{\alpha}{x},\intv{\beta'}{\beta'}) \in S_3$ as witnessed by some $\pair{\tilde \alpha}{\tilde{x}}$ with $f_{\alpha}(x)=\pair{\tilde \alpha}{\tilde{x}}$ and moreover that $\pair{\tilde \alpha}{\tilde{x}} \in X_{\beta'}$. Then $\bit{\alpha}{x}{\beta}{\gamma} \in S_2$, so there is some $i<\omega$ such that $(\pair{\tilde \alpha}{\tilde{x}},\intv{\beta}{\gamma}) \in s_i$. Then if it were the case that $\beta' \le \gamma$, it would not be the case that $\pair{\tilde \alpha}{\tilde x} \in X_{\beta'}$.


\item[$\alpha \in \fp$, $\beta,\beta' \in \fp$] This is analogous to the case $\alpha \in \ed$, $\beta,\beta' \in \fp$ in the sense that we only need to consider whether or not $\beta = \beta'$.

\end{description}


\autoref{cond-oneomega1club}: If $\cf(\alpha) = \omega_1$ and $\alpha \in \dom(\bar s)$, then $\alpha \in \ed$. Then $\bar s$ inherits \autoref{cond-oneomega1club} from the fact that the $s_i$'s have \autoref{cond-oneomega1club}.

\autoref{cond-pointclosure}: If $\alpha \in \fd \setminus \fp$ this clause holds vacuously. For the other cases, suppose that $\{ (\pair{\alpha}{x} , \intv{\beta_n}{\gamma_n}) : i < \omega \} \subseteq s$ and $\beta^*=\sup_{n<\omega}\beta_n < \alpha$. We will subdivide the cases based on the minimal value $k$ for which infinitely many of these bits are taken from $S_k$.

\begin{description}[style=unboxed,font=\normalfont,leftmargin=.2cm]

\item[Inf$.$ many from $S_0$] Choose $i<\omega$ large enough so that $(\pair{\alpha}{x},\intv{\beta_n}{\gamma_n}) \in s_i$ for some $n$ in our infinite set. Then $(\pair{\alpha}{x},\intv{\beta_n}{\gamma_n}) \in s_i$ will hold for infinitely many $n$. Therefore we can see that either $\beta^* \in \ep$ or $\beta^* \in \fp$. If $\beta^* \in \ep$ then we are done. If $\beta^* \in \fp$, then this is witnessed by $\pair{\alpha}{x}$, so we have $(\pair{\alpha}{x},\intv{\beta^*}{\beta^*}) \in S_1 \subseteq s$.

\item[Inf$.$ many from $S_1$] Then we have infinitely many $(\pair{\alpha}{x},\intv{\beta_n}{\beta_n})$ where $\pair{\alpha}{x} \in X_{\beta_n}$. So for each such $n<\omega$ there is a sequence $\beta_n^j,\gamma_n^j$ converging to $\beta_n$ such that $(\pair{\alpha}{x},\intv{\beta_n^j}{\gamma_n^j}) \in S_0$, so it follows that $\beta^* \in \fp$ and that $\pair{\alpha}{x} \in X_{\beta^*}$. Therefore $(\pair{\alpha}{x},\intv{\beta^*}{\beta^*}) \in S_1 \subseteq s$.

\item[Inf$.$ many from $S_2$] Then let $\pair{\tilde{\alpha}}{\tilde{x}}$ be such that $f_\alpha(x) = \pair{\tilde{\alpha}}{\tilde{x}}$. Then we are saying that we have infinitely many $n<\omega$ such that $(\pair{\tilde{\alpha}}{\tilde{x}},\intv{\beta_n}{\gamma_n}) \in S_0$. Therefore $\beta^* \in \fp$, as witnessed by $\pair{\tilde{\alpha}}{\tilde{x}}$, and hence $(\pair{\alpha}{x},\intv{\beta^*}{\beta^*}) \in S_3 \subseteq s$.

\item[Inf$.$ many from $S_3$] Again choose $\pair{\tilde{\alpha}}{\tilde{x}}$ such that $f_\alpha(x) = \pair{\tilde{\alpha}}{\tilde{x}}$. We are saying that we have infinitely many $n<\omega$ such that $(\pair{\tilde{\alpha}}{\tilde{x}},\intv{\beta_n}{\beta_n}) \in S_3$ where $\pair{\tilde{\alpha}}{\tilde{x}}$ also witnesses that each $\beta_n$ is in $\fp$. This implies that $\pair{\tilde{\alpha}}{\tilde{x}}$ witnesses that $\beta^*$ is in $\fp$. Hence $(\pair{\alpha}{x},\intv{\beta^*}{\beta^*}) \in S_3 \subseteq s$.

\item[Inf$.$ many from $S_4$] We considered this case at the beginning of our discussion of \autoref{cond-pointclosure}.

\end{description}

%
%
%
%
%
%
%


%
%
%
%
%
%

\autoref{cond-domainclosure}: Suppose $\{(\pair{\alpha_i}{x_i},\intv{\beta_i}{\gamma_i}): i<\omega \} \subset s$ and $\alpha^* = \sup_{i<\omega}\alpha_i$. All possibilities are among the following:

\begin{description}[style=unboxed,font=\normalfont,leftmargin=.2cm]

\item[$\alpha^* \in \ed$] Then there is some $i<\omega$ such that $\alpha^* \in \dom s_i$, hence $\alpha^* \in \dom s$.

\item[$\alpha^* \in \fd \setminus \fp$] Then for all $n<\omega$ we have $(\pair{\alpha^*}{x},\intv{\delta_n^\alpha}{\delta_{n+1}^\alpha-1}) \in S_4 \subseteq s$ and so we have $\alpha^* \in \dom s$.

\item[$\alpha^* \in \fp$] Then there is some $\pair{\alpha}{y}$ witnessing $\alpha^* \in \fp$ and some $x$ such that $f_{\alpha^*}(x) = \pair{\alpha}{y}$. Choose some $(\pair{\alpha}{y},\intv{\beta}{\gamma}) \in S_0$ with $\gamma<\alpha^*$. Then $(\pair{\alpha^*}{x},\intv{\beta}{\gamma}) \in S_2 \subseteq s$.

\end{description}

\autoref{cond-unboundedness}: This is clear by inspection.


\autoref{cond-decisiveness}: If $\alpha \in \fd \setminus \fp$ then the clause holds vacuously.

For the other cases, suppose that $(\pair{\alpha}{x},\intv{\beta}{\gamma}) \in s$ and $\cf(\beta)=\omega$.

\begin{description}[style=unboxed,font=\normalfont,leftmargin=.2cm]

\item[$\alpha \in \ed,\beta \in \ep$] Then the clause holds because $(\pair{\alpha}{x},\intv{\beta}{\gamma}) \in s_i$ for large $i$ and $s_i$ is a condition.

\item[$\alpha \in \ed,\beta \in \fp$] Then we have $(\pair{\alpha}{x},\intv{\beta}{\gamma}) \in S_1$ where $\beta = \gamma$ and $\pair{\alpha}{x}$ witnesses $\beta \in \fp$. Then it follows that case (a) holds.

\item[$\alpha \in \fp,\beta \in \ep$]

In this case we have $(\pair{\alpha}{x},\intv{\beta}{\gamma}) \in S_2$, meaning that $f_\alpha(x)=\pair{\tilde{\alpha}}{\tilde{x}}$ for some $\pair{\tilde{\alpha}}{\tilde{x}} \in X_\alpha$, and that $(\pair{\tilde{\alpha}}{\tilde{x}},\intv{\beta}{\gamma}) \in S_0$. Hence if $i<\omega$ is large enough that $(\pair{\tilde{\alpha}}{\tilde{x}},\intv{\beta}{\gamma}) \in s_i$, then the clause is witnessed by the fact that $s_i$ is a condition and $\Coh(s_i,\tilde{\alpha},\tilde{x},\alpha,x)$ holds.

\item[$\alpha \in \fp,\beta \in \fp$] In this case we have $(\pair{\alpha}{x},\intv{\beta}{\gamma}) \in S_2$, meaning that $\beta = \gamma$, $f_\alpha(x) = \pair{\tilde{\alpha}}{\tilde{x}}$, and $\pair{\tilde{\alpha}}{\tilde{x}}$ also witnesses $\beta \in \fp$. Hence there are $\beta_i,\gamma_i$ converging to $\beta$ such that $(\pair{\tilde{\alpha}}{\tilde{x}},\intv{\beta_i}{\gamma_i}) \in  S_0$ for $i<\omega$. Therefore $(\pair{\alpha}{x},\intv{\beta_i}{\gamma_i}) \in  S_2$ for $i<\omega$. This implies that case (a) holds.

\end{description}

\autoref{cond-coherence}: Referring to the statement in \autoref{newforcing}, fix $\pair{\alpha}{x}$ and $\beta$ such that the hypothesis holds for $(\pair{\alpha}{x},\intv{\beta}{\gamma}) \in s$ for some $\gamma$.

\begin{description}[style=unboxed,font=\normalfont,leftmargin=.2cm]


\item[$\alpha \in \fd \setminus \fp$] Then coherence holds vacuously because there are no limit points to consider.

\item[$\alpha \in \ed, \beta \in \ep$] Since $\beta \in \ep$, there is some $i<\omega$ such that $s_i \Vdash ``\beta \in \lim \dot{C}_\alpha^x$''. Therefore there is some $y$ witnessing that $\Coh(s_i,\alpha,x,\beta,y)$ holds. This also holds for $i' \ge i$ by the definition of $\le_\S$. This means that if $(\pair{\alpha}{x},\intv{\beta'}{\gamma'}) \in s$ for $\gamma'<\beta$, then there are two possibilities. The first is that $\beta' \in \ep$, and hence $(\pair{\alpha}{x},\intv{\beta'}{\gamma'}) \in s_j$ for some $j \ge i$, in which $\Coh(s_j,\alpha,x,\beta,y)$ implies that $(\pair{\beta}{y},\intv{\beta'}{\gamma'}) \in s_j \subseteq s$. The second possibility is that $\beta' \in \fp$. Then it must be the case that $(\pair{\alpha}{x},\intv{\beta'}{\gamma'}) \in S_1$, meaning that $\beta'=\gamma'$ and that $\pair{\alpha}{x} \in X_{\beta'}$. Then this plus the fact that $\Coh(s_i,\alpha,x,\beta,y)$ holds for large $i$ implies that $\pair{\beta}{y} \in X_{\beta'}$. Therefore $(\pair{\beta}{y},\intv{\beta'}{\beta'}) \in S_1$.

Now suppose $(\pair{\beta}{y},\intv{\beta'}{\gamma'}) \in s$ and consider the same two possibilities. If $\beta' \in \ep$, then if $i$ is large enough that $s_i \Vdash ``\beta \in \lim \dot{C}_\alpha^x$'' and $(\pair{\beta}{y},\intv{\beta'}{\gamma'}) \in s_i$, then $\Coh(s_i,\alpha,x,\beta,y)$ implies $(\pair{\alpha}{x},\intv{\beta'}{\gamma'}) \in s_i$. If $\beta' \in \fp$ then $(\pair{\beta}{y},\intv{\beta'}{\gamma'}) \in S_1$, implying $\beta'=\gamma'$ and $\pair{\beta}{y} \in X_{\beta'}$, so $\pair{\alpha}{x} \in X_{\beta'}$, hence $(\pair{\alpha}{x},\intv{\beta'}{\gamma'}) \in S_1$.


\item[$\alpha \in \ed$, $\beta \in \fp$] Then $\pair{\alpha}{x}$ witnesses $\beta \in \fp$, so let $y$ be such that $f_\beta(y)=\pair{\alpha}{x}$. Then we can argue that $\Coh(s,\alpha,x,\beta,y)$ holds. If $(\pair{\alpha}{x},\intv{\beta'}{\gamma'}) \in s$ and $\beta' \in \ep$, then $(\pair{\alpha}{x},\intv{\beta'}{\gamma'}) \in S_0$, and hence $(\pair{\beta}{y},\intv{\beta'}{\gamma'}) \in S_2$. If $(\pair{\alpha}{x},\intv{\beta'}{\gamma'}) \in s$ and $\beta' \in \fp$, then $\gamma'=\beta'$ and $\pair{\alpha}{x} \in X_{\beta'}$, so $(\pair{\beta}{y},\intv{\beta'}{\gamma'}) \in S_3$. The other inclusion follows the same reasoning.

\item[$\alpha \in \fp$, $\beta \in \ep$] Then there is some $\pair{\tilde \alpha}{\tilde x}$ witnessing $\alpha \in \fp$ with $f_\alpha(x)=\pair{\tilde \alpha}{\tilde x}$. All bits $(\pair{\alpha}{x},\intv{\beta}{\gamma}) \in s$ such that $\beta \in \ep$ are in $S_2$ and witnessed by $(\pair{\tilde \alpha}{\tilde x},\intv{\beta}{\gamma}) \in S_0$. Therefore there is some $i<\omega$ with $s_i \Vdash ``\beta \in \lim \dot{C}^{\tilde x}_{\tilde \alpha}$''. Let $y$ be such that $\Coh(s_i,\tilde{\alpha},\tilde{x},\beta,y)$ holds. We can then argue that $\Coh(s,\alpha,x,\beta,y)$ holds. Suppose that $\beta',\gamma'<\beta$ and $(\pair{\alpha}{x},\intv{\beta'}{\gamma'}) \in \tilde S$ with $\beta' \in \ep$. Then $(\pair{\tilde \alpha}{\tilde x},\intv{\beta'}{\gamma'}) \in S_0$, so $(\pair{\beta}{y},\intv{\beta'}{\gamma'}) \in S_0$. If instead $\beta' \in \fp$, then $(\pair{\alpha}{x},\intv{\beta'}{\beta'}) \in S_3$ and $\pair{\tilde \alpha}{\tilde x} \in X_{\beta'}$, which implies $\pair{\beta}{y} \in X_{\beta'}$, so $(\pair{\beta}{y},\intv{\beta'}{\beta'}) \in S_3$. Again, the reverse reasoning applies if we start with the premise that $(\pair{\beta}{y},\intv{\beta'}{\gamma'}) \in s$.

\item[$\alpha \in \fp$, $\beta \in \fp$] Then $(\pair{\alpha}{x},\intv{\beta}{\beta}) \in S_3$, so there is some $\pair{\tilde \alpha}{\tilde x} \in X_\alpha \cap X_\beta$ such that $f_\alpha(x)=\pair{\tilde \alpha}{\tilde x}$. Let $y$ be such that $f_\beta(y)=\pair{\tilde \alpha}{\tilde x}$. We argue that $\Coh(s,\alpha,x,\beta,y)$ holds. Suppose $\bit{\alpha}{x}{\beta'}{\gamma'} \in s$. If $\beta' \in \ep$, then $(\pair{\alpha}{x},\intv{\beta'}{\gamma'}) \in S_2$, so $(\pair{\tilde \alpha}{\tilde x},\intv{\beta'}{\gamma'}) \in S_0$, so $(\pair{\beta}{y},\intv{\beta}{\gamma'}) \in S_2$. If $\beta' \in \fp$, then $\beta'=\gamma'$ and $(\pair{\alpha}{x},\intv{\beta'}{\beta'}) \in S_3$, implying $\pair{\tilde \alpha}{\tilde x} \in X_{\beta'} \cap X_\alpha$, i.e$.$ $\pair{\tilde \alpha}{\tilde x} \in X_{\beta'} \cap X_\beta \cap X_\alpha$, implying $(\pair{\beta}{y},\intv{\beta'}{\beta'}) \in S_3$ also. Once more, we can reverse the reasoning.


\end{description}

This finishes the proof of \autoref{closure}.\end{proof}

The next task is to prove that $\S$ preserves $\aleph_2$.


\begin{lemma}\label{squaremaster} If $s \in \S$ and $M \prec K$ is a basic model with $s,\S \in M$, then $s$ is strongly $(M,\S)$-generic.\end{lemma}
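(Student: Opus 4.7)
Let $\delta := M \cap \aleph_2$. Since $|M| = \aleph_1$ and $M^\omega \subseteq M$, we have $\cf(\delta) = \omega_1$, placing $\delta$ in the set $(\aleph_2 \cap \cof(\omega_1)) \cup \{\aleph_2\}$ permitted for the restriction operation $\nrest$. The plan is, given any $s' \le_\S s$, to exhibit a reduction $p \in \S \cap M$ with $p \le_\S s$ such that every $p' \le_\S p$ in $M$ is compatible with $s'$. The natural candidate is $p := s' \nrest \delta$: this lies in $\S$ by the convention before the lemma, and countability together with $M^\omega \subseteq M$ yields $p \in M$; moreover $p \le_\S s$ follows from $s \subseteq M$ (by elementarity plus the countability of $s$), which forces $s \subseteq p$, while any coherence witness of $s$ lives below $\delta$ and so persists unchanged through the restriction.

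Now fix $p' \le_\S p$ with $p' \in M$. Since $p'$ is countable and in $M$, one has $p' \subseteq M$ and hence $\sup(\dom(p')) < \delta$; moreover $s' \nrest \delta = p \subseteq p'$, so the bits of $s'$ absent from $p'$ are exactly those with $\alpha \ge \delta$. The candidate common lower bound is $q := s' \cup p' \cup S$, where $S$ plays the role of $S_5$ in \autoref{closure}: for each $(\alpha,x)$ with $\alpha \in \dom(s') \setminus \delta$, and each $\alpha'' \in \dom(p') \setminus \dom(s')$ with $\alpha'' < \alpha$ not already covered by an $(\alpha,x)$-bit of $s'$, we adjoin a bit $\bit{\alpha}{x}{\beta''}{\gamma''}$ in the appropriate gap. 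The crucial design choice is that these endpoints must be picked \emph{consistently} with coherence: if $\dot C_\alpha^x$ coheres in $s'$ at some $\beta \in \dom(s') \cap (\alpha'', \delta]$ via witness $y$, then $\beta \in \dom(p')$ (via $p \subseteq p'$), and we take $(\beta'',\gamma'')$ to be the endpoints of the (unique, by \autoref{cond-disjointness}) bit $\bit{\beta}{y}{\beta''}{\gamma''} \in p'$ covering $\alpha''$, whose existence is guaranteed by \autoref{cond-completeness} applied to $p'$. The transitivity of coherence --- if $(\alpha,x)$ coheres at $\beta_1 < \beta_2$ with witnesses $y_1,y_2$, then $(\beta_2,y_2)$ coheres at $\beta_1$ with $y_1$ --- ensures that this prescription is independent of the choice of $\beta$.

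Verification that $q \in \S$ proceeds clause by clause as in \autoref{closure}, splitting each case on whether $\alpha < \delta$ (when all relevant bits come from $p'$) or $\alpha \ge \delta$ (when all relevant bits come from $s'$ together with $S$). Boundedness of $\dom(p')$ in $\delta$ ensures that no $\omega$-convergent sequence drawn from $p'$ reaches $\delta$, so clauses (\ref{cond-size})--(\ref{cond-domainclosure}), (\ref{cond-unboundedness}), and (\ref{cond-decisiveness}) are routine, while (\ref{cond-completeness}) is exactly what $S$ was constructed to secure. The main obstacle is (\ref{cond-coherence}): for a bit $\bit{\alpha}{x}{\beta}{\gamma} \in q$ meeting its hypothesis, one must exhibit $y$ with $\bit{\alpha}{x}{\beta'}{\gamma'} \in q \iff \bit{\beta}{y}{\beta'}{\gamma'} \in q$ for all $\gamma' < \beta$. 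The case $\alpha < \delta$ reduces directly to coherence of $p'$; when $\alpha \ge \delta$, the witness $y$ supplied by $s'$ works, but one must also verify that for each sealing bit $\bit{\alpha}{x}{\beta''}{\gamma''} \in S$ with $\gamma'' < \beta$, the matching bit $\bit{\beta}{y}{\beta''}{\gamma''}$ is in $q$. This is where the consistent prescription pays off: the common sealing endpoints at any $\alpha'' < \min\{\beta,\delta\}$ are dictated by the same bit $\bit{\beta_0}{y_0}{\beta''}{\gamma''} \in p'$ for any coherence point $\beta_0 \in (\alpha'',\delta]$ common to $(\alpha,x)$ and $(\beta,y)$, so the sealing off done for $(\alpha,x)$ and for $(\beta,y)$ agree. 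Finally, $q \le_\S s'$ and $q \le_\S p'$ follow by inspection of the second clause of the definition of $\le_\S$, since by construction $S$ preserves every coherence witness already forced by $s'$ or $p'$.
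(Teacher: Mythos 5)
Your overall strategy mirrors the paper's: you take the reduction $p = s' \nrest \delta$ (which coincides with the paper's $s^*$, since $\alpha < \delta$ automatically forces $\beta,\gamma < \delta$), observe $p \in \S \cap M$ by the closure of $M$ under countable sequences, and then, given $p' \le p$ in $M$, try to build a common lower bound of $s'$ and $p'$. The reduction step and the claim $p \le_\S s$ are correct. The gap lies in the construction of $q$.

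Your $S$ consists only of sealing-off bits: one bit $\bit{\alpha}{x}{\beta''}{\gamma''}$ for each new domain element $\alpha'' \in \dom(p') \setminus \dom(s')$ below $\alpha$, chosen to cover $\alpha''$ and thereby secure \autoref{cond-completeness}. But \autoref{cond-coherence} demands much more. Suppose $\alpha \ge \delta$, $\beta < \delta$, and $s'$ forces $\beta$ to be a coherence limit point of $\dot C_\alpha^x$ with witness $y$. Then $q$ must satisfy the full biconditional: for all $\beta' \le \gamma' < \beta$, $\bit{\alpha}{x}{\beta'}{\gamma'} \in q$ if and only if $\bit{\beta}{y}{\beta'}{\gamma'} \in q$. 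Now $p'$ can (and generically will) contain bits $\bit{\beta}{y}{\beta'}{\gamma'}$ whose endpoints are not domain elements of $p'$ at all — for instance, an approximation to a club in $\beta$ whose points are not themselves limit ordinals carrying a cell of the square sequence. Such a bit sits in $q$ via $p' \subseteq q$, but your $S$ never produces the matching bit $\bit{\alpha}{x}{\beta'}{\gamma'}$, since $\intv{\beta'}{\gamma'}$ covers no element of $\dom(p') \setminus \dom(s')$. Thus the "only if" direction of \autoref{cond-coherence} fails and $q \notin \S$. What is needed is a wholesale transfer: for each $\alpha \ge \delta$ and each coherence pair $\pair{\beta}{y}$ with $\beta < \delta$, every bit $\bit{\beta}{y}{\beta'}{\gamma'} \in p'$ must contribute $\bit{\alpha}{x}{\beta'}{\gamma'}$ to $q$. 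The paper's proof does exactly this with its set $S_0$ (defined via the sets $X_{\pair{\alpha}{x}}$ of coherence pairs), and only afterward applies a sealing-off step $S_1$ playing the role your $S$ was meant to play. Without the analogue of $S_0$, the argument breaks.

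Your observation about transitivity of coherence is correct and does ensure the transfer is well-defined (independent of which coherence point is chosen), and the remaining clause verifications are routine once $S_0$ is added; but as written the proof does not establish that $q \in \S$.
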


Therefore \autoref{basicprop} implies:

\begin{corollary}\label{aleph2-pres} Under $\CH$, $\S$ preserves $\aleph_2$.\end{corollary}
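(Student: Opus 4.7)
Set $\delta := M \cap \aleph_2$, which is an ordinal of cofinality $\omega_1$ (using $\aleph_1 \subseteq M$ and $M^\omega \subseteq M$), so $s' \nrest \delta \in \S$ for every $s' \in \S$ by the remark immediately after \autoref{newforcing}. Given $s' \le_\S s$, define the residue $p := s' \nrest \delta$. Every bit of $p$ involves only ordinals below $\delta$, all of which lie in $M$, so $p$ is a countable subset of $M$, and $M^\omega \subseteq M$ yields $p \in M$. To establish strong $(M,\S)$-genericity of $s$, it suffices to produce, for each $p' \in \S \cap M$ with $p' \le_\S p$, a common lower bound $\tilde s \le_\S s',p'$.

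Begin with $\tilde s \supseteq s' \cup p'$. This union may fail \autoref{cond-completeness} at pairs $(\alpha,x)$ with $\alpha \in \dom(s') \setminus \delta$ and $\alpha' \in \dom(p') \setminus \dom(s')$ with $\alpha' < \alpha$, and it may fail \autoref{cond-coherence} when $s'$ forces $\dot C^x_\alpha$ to cohere at some $\beta < \delta$ via $(\beta,y)$ for which $p'$ has added new bits. The remedy for both is to copy: whenever a coherence witness $(\beta,y) \in s'$ satisfies $\alpha' < \beta$, the bit of $(\beta,y)$ in $p'$ that covers $\alpha'$ (furnished by completeness of $p'$) is copied to an analogous bit of $(\alpha,x)$ adjoined to $\tilde s$. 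If no such witness lies in $(\alpha',\delta)$, we instead seal off $(\alpha,x)$ above $\alpha'$ with a successor-start bit, which triggers no new coherence obligation via \autoref{cond-decisiveness}(b).

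A delicate point arises when $\cf(\alpha) = \omega_1$: \autoref{cond-oneomega1club} forces the bits of $(\alpha,x)$ in $\tilde s$ to agree across $x$, while $s'$ may admit distinct witnesses $\{y_x : x \in \omega\}$ for coherence of $\dot C^x_\alpha$ at some $\beta < \delta$ with $\cf(\beta) = \omega$, all of which must remain witnesses in $\tilde s$ by the second clause of $\le_\S$. The extension $p'$ may have allowed the bits of $(\beta,y_x)$ to diverge above any common coherence witness level they share in $p'$; we repair this by enlarging each $(\beta,y_x)$ in $\tilde s$ to the union of the $p'$-bits appearing at $(\beta,y_{x'})$ for all $x' \in \omega$, and propagating the analogous equalization down the coherence tree of $p'$. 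The cascade terminates in countably many steps, since $s'$ and $p'$ are countable and the $\beta$-level being processed strictly decreases.

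The remaining clauses of \autoref{newforcing} are verified by splitting cases along $\delta$: because $\cf(\delta) = \omega_1$, no $\omega$-sequence of indices crosses $\delta$, so \autoref{cond-pointclosure}, \autoref{cond-domainclosure}, \autoref{cond-unboundedness}, and the decisiveness requirement for pre-existing bits all reduce to the corresponding properties of $s'$ or $p'$ in isolation. The inequalities $\tilde s \le_\S s'$ and $\tilde s \le_\S p'$ then follow because every coherence pair forced by either $s'$ or $p'$ is, by construction, forced by $\tilde s$. The principal obstacle is the $\cf(\alpha) = \omega_1$ cascade: verifying that the union-and-propagate operation preserves every coherence pair already forced by $p'$ without disturbing completeness is where the most care is needed.
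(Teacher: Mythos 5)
Your proposal is really a re-derivation of Lemma \ref{squaremaster} (strong genericity of arbitrary conditions over basic models), after which the corollary follows by combining with \autoref{basicprop}; that much is the right skeleton, and the residue $p := s' \nrest \delta$ plays exactly the role of the $s^*$ in the paper's proof. However, the main body of your argument diverges from the paper in a way that introduces a genuine gap.

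The central mechanism in the paper's proof is not ``start with $s' \cup p'$ and patch'': it is the explicit construction of a \emph{copy set}. For each $\pair{\alpha}{x}$ with $\alpha \in \dom(s') \setminus \delta$, one collects the set $X_{\pair{\alpha}{x}}$ of pairs $\pair{\beta}{y}$ with $\beta<\delta$ such that $s' \Vdash \beta \in \lim \dot{C}_\alpha^x$ and $y$ witnesses coherence there; then one transfers every bit of $p'$ sitting on any such $\pair{\beta}{y}$ back onto $\pair{\alpha}{x}$. This copying is what enforces \autoref{cond-coherence} for the new condition, and the verification of \autoref{cond-disjointness} requires showing the copied bits are mutually compatible --- which is done by observing that the different coherence witnesses in $X_{\pair{\alpha}{x}}$ are themselves linked by coherence relations inside $s^*$, relations that $p'$ must preserve under $\le_\S$. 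Your proposal never states this copy set or the consistency argument, and instead proposes an ad hoc ``enlarge-and-propagate'' cascade; this cascade is not a valid move, since unioning bit sets across different $(\beta,y_x)$ can easily violate \autoref{cond-disjointness} or \autoref{cond-decisiveness}, and the claim that it terminates because ``the $\beta$-level strictly decreases'' has no clear basis.

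Your ``delicate point'' about $\cf(\alpha)=\omega_1$ is also misidentified. By \autoref{cond-oneomega1club}, when $\cf(\alpha)=\omega_1$ the bit sets $(\pair{\alpha}{x},\cdot)$ are literally the same set for all $x$, so the coherence condition of $\dot C^x_\alpha$ at $\beta$ does not depend on $x$ and one may take the same witness for every $x$. The actual subtlety the paper's definitions are navigating has nothing to do with that clause; it is that multiple $y$'s can witness coherence at the \emph{same} $\beta$, and one needs the coherence relations inside $p'$ (inherited from $s^*$ via the second clause of $\le_\S$) to guarantee the copies agree. Finally, you gesture at the clause verifications by saying they ``reduce to the corresponding properties of $s'$ or $p'$ in isolation'' because $\cf(\delta)=\omega_1$; that is true for \autoref{cond-pointclosure} and \autoref{cond-domainclosure}, but \autoref{cond-disjointness} and \autoref{cond-coherence} genuinely interleave $s'$ and $p'$ and need the copy-set argument, which is where the paper does most of its work.
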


\begin{proof}[Proof of \autoref{squaremaster}] Let $\delta = M \cap \aleph_2$. Suppose $s' \le s$ and  let $s^*$ consist of all $(\pair{\alpha}{x},\intv{\beta}{\gamma}) \in s'$ such that $\alpha,\beta,\gamma < \delta$. Then $s^* \in M$ by the countable closure of $M$. We can see that $s^* \in \S$ since it is an initial segment of $s'$, the supremum of whose domain is a limit ordinal. Let $s^{**} \le s^*$ where $s^{**} \in M$. We will construct $s''$ such that $s'' \le s',s^{**}$.

For each $\alpha \in \dom(s') \setminus \delta$, $x \in \omega$, let $X_{\pair{\alpha}{x}}$ be the set of $\pair{\beta}{y}$ with $\beta<\delta$ such that $s' \Vdash ``\beta \in \lim \dot{C}_\alpha^x \cap \delta$'', and such that $\Coh(s',\alpha,x,\beta,y)$ holds. Let $S_0$ be the set
\begin{multline*}
\{(\pair{\alpha}{x},\intv{\beta'}{\gamma'}): \alpha \in \dom(s')\setminus \delta, x \in  \omega, \exists \pair{\beta}{y} \in X_{\pair{\alpha}{x}},\\ (\pair{\beta}{y},\intv{\beta'}{\gamma'}) \in s^{**}\}.
\end{multline*}
We then argue that $s'' := s^{**} \cup s' \cup S_0$ is a condition by verifying the more substantial clauses from \autoref{newforcing}.



\autoref{cond-disjointness}: Fix $(\pair{\alpha}{x},\intv{\beta}{\gamma})$ and $(\pair{\alpha}{x},\intv{\beta'}{\gamma'})$ for which we will prove the clause. The clause follows from the fact that $s^{**}$ and $s'$ are conditions in $\S$ if $\alpha < \delta$ or if both bits are in $s'$. Suppose that $(\pair{\alpha}{x},\intv{\beta}{\gamma}) \in s'$ and $(\pair{\alpha}{x},\intv{\beta'}{\gamma'}) \in S_0 \setminus s'$ as witnessed by $\pair{\bar \beta}{y} \in X_{\pair{\alpha}{x}}$. Since $\gamma'<\bar \beta$, we can assume without loss of generality that $\beta < \bar \beta$, from which it follows that $\gamma<\bar \beta$ since $s' \Vdash ``\bar \beta \in \lim \dot{C}^x_\alpha$''. Then $(\pair{\bar \beta}{y},\intv{\beta}{\gamma}) \in s^*$ and $(\pair{\bar \beta}{y},\intv{\beta'}{\gamma'}) \in s^{**}$, meaning that both bits are in $s^{**}$. Hence it must be that either $\beta = \beta'$ or $\intv{\beta}{\gamma} \cap \intv{\beta'}{\gamma'} = \emptyset$. The remaining case, in which both $(\pair{\alpha}{x},\intv{\beta}{\gamma})$ and $(\pair{\alpha}{x},\intv{\beta'}{\gamma'})$ are both in $S_0 \setminus s'$, is similar.



\autoref{cond-coherence}: Fix $\pair{\alpha}{x}$ and $\beta$ as in the statement of \autoref{cond-coherence} in \autoref{newforcing}. If $\alpha<\delta$, then if $\bit{\alpha}{x}{\beta'}{\gamma'} \in s^{**} \cup s' \cup S_0$ for some $x,\beta,\gamma$, then $\bit{\alpha}{x}{\beta'}{\gamma'} \in s^{**}$, so we have what we need because $s^{**} \in \S$ and satisfies \autoref{cond-coherence}. If $\alpha \in \aleph_2 \setminus \delta$ and $\beta \ge \delta$, then we have what we need because $s' \in \S$. Observe that if $\beta<\delta$, $\alpha \in \aleph_2 \setminus \delta$, and $s' \Vdash ``\beta \in \lim \dot{C}_\alpha$'', then we have $\beta \in \dom(s^{**})$, so the clause holds for the remaining case due to bits in $S_0$.\end{proof}

\begin{lemma}\label{addingsquare} $\S$ adds a $\square(\aleph_2,\aleph_0)$-sequence.\end{lemma}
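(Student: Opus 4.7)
In $V[G]$, set $C_\alpha^x := \dot C_\alpha^x[G]$ and $\mathcal{C}_\alpha := \{C_\alpha^x : x \in \omega\}$. The plan is to verify the four clauses defining a $\square(\aleph_2,\aleph_0)$-sequence. Clauses (1)--(3) reduce via standard density arguments to the corresponding clauses of \autoref{newforcing}: closure of each $C_\alpha^x$ combines \autoref{cond-pointclosure} with the countable closure of $\S$ (\autoref{closure}) to fold the relevant $\omega$-sequence of bits into a single condition in $G$; unboundedness is immediate from \autoref{cond-unboundedness} when $\cf(\alpha) = \omega$ and follows from a direct density extension when $\cf(\alpha) = \omega_1$; $|\mathcal{C}_\alpha| \le \aleph_0$ is trivial and reduces to $1$ when $\cf(\alpha) = \omega_1$ by \autoref{cond-oneomega1club}; and coherence is exactly \autoref{cond-coherence}.

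The substantive content is the absence of a thread, which I plan to prove by contradiction. Suppose $s \fr \dot D$ is a thread, and by \autoref{basicprop} pick a basic $M \prec K$ with $s, \dot D \in M$ and $\delta := M \cap \aleph_2$ of cofinality $\omega_1$. Since $\dot D$ is forced to be a club in $\aleph_2$, a density argument refines $s$ to force $\check\delta \in \lim \dot D$, and then \autoref{squaremaster} yields $s^* \le s$ that is strongly $(M,\S)$-generic with $\delta \in \dom(s^*)$ and $s^* \fr \check\delta \in \lim \dot D$. By \autoref{cond-oneomega1club}, $s^* \fr \dot D \cap \check\delta = \dot C_\delta^0$.

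Set $s^\dagger := \{\bit{\alpha}{x}{\beta}{\gamma} \in s^* : \alpha,\beta,\gamma < \delta\}$; by countable closure of $M$ and the computation in the proof of \autoref{squaremaster}, $s^\dagger \in M$ is a condition in $\S \cap M$ below $s$. Pick $\beta < \delta$ of cofinality $\omega_1$ strictly above every $\gamma'$ occurring in a bit $\bit{\delta}{0}{\beta'}{\gamma'}$ of $s^*$. I claim $s^\dagger$ cannot decide $\check\beta \in \dot D$: any such decision, combined with $s^* \le_\S s^\dagger$ and $s^* \fr \dot D \cap \check\delta = \dot C_\delta^0$, would force $s^* \fr \check\beta \in \dot C_\delta^0$ or $s^* \fr \check\beta \notin \dot C_\delta^0$ correspondingly; but by the choice of $\beta$, both $s^* \cup \{\bit{\delta}{0}{\beta}{\beta}\}$ and $s^* \cup \{\bit{\delta}{0}{\beta''}{\gamma''}\}$ (for a suitable $\beta''$ of cofinality $\omega_1$ between the $\delta$-support of $s^*$ and $\beta$, with $\gamma'' \ge \beta$) are conditions extending $s^*$ and deciding $\check\beta \in \dot C_\delta^0$ in opposite directions, ruling out both cases. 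Set $s^+ := s^* \cup \{\bit{\delta}{0}{\beta}{\beta}\}$; this is a condition forcing $\check\beta \in \dot C_\delta^0$. Choose $p^- \le s^\dagger$ in $\S \cap M$ with $p^- \fr \check\beta \notin \dot D$. The amalgamation construction in the proof of \autoref{squaremaster}, applied to $s^+$ and $p^-$, produces a common extension $s^{**}$ forcing both $\check\beta \in \dot C_\delta^0 = \dot D \cap \check\delta$ (hence $\check\beta \in \dot D$) and $\check\beta \notin \dot D$, a contradiction.

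The main obstacle I anticipate is executing the amalgamation cleanly: $s^+$ extends $s^*$ only by a bit with first coordinate $\delta \notin M$, while $p^-$ extends $s^\dagger$ only by bits with first coordinates $< \delta$, so the extensions live on disjoint coordinates and the $S_0$/$S_1$ construction from the proof of \autoref{squaremaster} adapts with minor bookkeeping. The choice of $\beta$ strictly above the $\delta$-support of $s^*$ is essential to ensure that \autoref{cond-coherence} at $\delta$ continues to hold after inserting $\bit{\delta}{0}{\beta}{\beta}$, since then $\beta$ is not a limit point of $\dot C_\delta^0$ in the amalgamated condition.
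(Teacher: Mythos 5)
The first three parts of your plan (closure, unboundedness, the cardinality/singleton constraint, and coherence) match the paper's treatment and are fine. The unthreadability argument, however, takes a genuinely different route from the paper, and it contains a gap I do not think is ``minor bookkeeping.''

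Your closing sentence claims that choosing $\beta$ above the $\delta$-support of $s^*$ ensures ``$\beta$ is not a limit point of $\dot C_\delta^0$ in the amalgamated condition.'' This is exactly backwards when $\cf(\beta)=\omega_1$: by \autoref{basiclimit}, case (2)(a), the single bit $\bit{\delta}{0}{\beta}{\beta}$ \emph{already} forces $\beta \in \lim\dot C_\delta^0$, so $s^+$ must satisfy \autoref{cond-coherence} at $\beta$ (and \autoref{cond-oneomega1club} at $\delta$). Thus $s^+$ cannot be the literal set $s^* \cup \{\bit{\delta}{0}{\beta}{\beta}\}$; you must add, for every $x\in\omega$, all bits $\bit{\beta}{x}{\beta'}{\gamma'}$ matching the $\delta$-bits of $s^*$ below $\beta$ (and the $x$-copies at $\delta$). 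These new bits lie below $\delta$, so the restriction of $s^+$ to $\mathfrak{S}\cap M$ is strictly larger than $s^\dagger$. But strong genericity (as proved in \autoref{squaremaster}) guarantees amalgamation only for $p^- \in M$ below the restriction of $s^+$, not below $s^\dagger$. You chose $p^- \le s^\dagger$ forcing $\check\beta\notin\dot D$, and nothing you wrote rules out that $p^-$ carries bits at $\beta$ that collide with the $\beta$-bits just forced into $s^+$ (or that the enlarged restriction $s^\dagger \cup \{\bit{\beta}{x}{\beta'}{\gamma'}:\cdots\}$ already decides $\check\beta\in\dot D$ in the affirmative, in which case no such $p^-$ exists). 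So the common extension $s^{**}$ is not available, and the contradiction does not close.

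For comparison, the paper avoids elementary submodels entirely for this step: it builds an $\omega$-length interleaved descending chain $\seq{s_i}{i<\omega}$ and ordinals $\seq{\alpha_i}{i<\omega}$ so that $\alpha^* := \sup_i\alpha_i$ does not enter $\bigcup_i\dom(s_i)$, and then invokes the parsimonious lower bound of \autoref{closure}: at a point of $\fd\setminus(\ed\cup\fp)$ the constructed $\mathcal{C}_{\alpha^*}$ is forced to consist solely of $\omega$-sequences, while $\dot D\cap\alpha^*$ was arranged to have order type exceeding $\omega$. This forces $\dot D \cap \alpha^* \notin \dot{\mathcal C}_{\alpha^*}$ directly, no amalgamation needed. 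If you want to retain a master-condition flavor, you would need to control the restriction of $s^+$ all the way along (or avoid inserting a new $\cf=\omega_1$ limit point), and at that stage the paper's direct density argument is both simpler and cleaner.
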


\begin{proof} Suppose $G$ is $\S$-generic over $V$. We know from \autoref{closure} and \autoref{aleph2-pres} that $\S$ preserves cardinals and cofinalities up to and including $\aleph_2$. In $V[G]$ we let $\vec{\mathcal{C}} = \seq{\mathcal{C}_\alpha}{\alpha \in \Lim(\aleph_2)}$ be defined so that $\mathcal{C}_\alpha = \{C_\alpha^x:x \in \omega\}$ and $C_\alpha^x = \dot{C}_\alpha^x[G]$. That the $C_\alpha^x$'s are clubs was handled by \autoref{objects-are-clubs}.


First, we show that $\vec{\mathcal{C}}$ is coherent. Suppose that $s \in \S$ forces that $\beta$ is a limit point of $\dot{C}_\alpha^x$, and suppose for contradiction that there is no $y \in \omega$ and no $s' \le s$ forcing that $``\dot{C}_\alpha^x \cap \beta = \dot{C}_\beta^y"$. Then we find an $\le_\S$-decreasing sequence $\seq{s_i}{i<\omega}$ of complete conditions such that $s_i \Vdash ``\dot{C}_\alpha^x \cap \beta \ne \dot{C}_\beta^i"$, and we find a lower bound $\bar s$ of this sequence using the countable closure of $\S$. Then $\bar s$ violates \autoref{cond-coherence}.

Next, we show that $\vec{\mathcal{C}}$ does not have a thread in $V[\S]$, using the usual genericity argument. Suppose $s$ forces that $\dot D$ is a closed unbounded subset of $\aleph_2$. Let $\bar \alpha$ and $s' \le s$ be such that $s'$ forces $\bar \alpha$ to be the $\omega\th$ point of $\dot D$.  Build an $\S$-decreasing sequence of complete conditions $\seq{s_i}{i<\omega}$ below $s'$ and an increasing sequence of ordinals $\seq{\alpha_i}{i<\omega}$ above $\bar \alpha$ as follows: Let $\alpha_0 = \bar \alpha$ and $s_0 = s'$. Given $s_i$ and $\alpha_i$, $s_{i+1} \le s_i$ and $\alpha_{i+1}$ will be chosen such that $s_{i+1} \Vdash ``\alpha_{i+1} \in \lim \dot{D}"$ and such that $\alpha_{i+1}> \max s_i$ and $\max s_{i+1}>\alpha_i$. Then let $\alpha^*$ be the supremum of the $\alpha_i$'s and let $s^*$ be a lower bound of the $s_i$'s forcing that $\dot{\mathcal{C}}_{\alpha^*}$ consists only of $\omega$-sequences---this is specifically possible from the argument in \autoref{closure}, where $\alpha^*$ would be in $\fd \setminus (\ed \cup \fp)$ in that argument. Therefore $s^* \Vdash ``\dot D \cap \alpha^* \notin \dot{\mathcal{C}}_{\alpha^*}$''.\end{proof}


\section{Threads}\label{sec-newthreading}

Next we introduce the threading forcing $\T$. Of course, a $\square(\aleph_2,\aleph_0)$-sequence is defined as such because it has no thread, but our ability to force a thread allows us to make use of large cardinals from the ground model. This is also how the threading forcing for Jensen's method works. The goal of this section is to show that we can use our own $\T$ to lift weakly compact embeddings of the form $j:\mathcal{M} \to \mathcal{N}$.


\begin{definition}\label{newthreading} Let $G_\S$ be $\S$-generic over $V$ and work in $V[G_\S]$. For all $\alpha \in \Lim(\aleph_2)$ and $x \in \omega$, let $C_\alpha^x = \dot{C}_\alpha^x[G_\S]$.

Define $\mathfrak{T}$ to be the set of closed intervals $\intv{\beta}{\gamma}$ such that $-1 \le \beta \le \gamma < \aleph_2$. Then we define a poset of $\T$ of conditions $t \subset \mathfrak{T}$ such that the following holds:

\begin{enumerate}

\item\label{tcond-ctble} The set $t$ is countable.

\item\label{tcond-disjointness} If $\intv{\beta}{\gamma}$, $\intv{\beta'}{\gamma'} \in t$, then either $\beta = \beta'$ or $\intv{\beta}{\gamma} \cap \intv{\beta'}{\gamma'} = \emptyset$.

\item\label{tcond-pointclosure} If $\{  \intv{\beta_i}{\gamma_i} : i < \omega \} \subseteq t$ and $\beta^*=\sup_{i<\omega}\beta_i$, then
it follows that $ \intv{\beta^*}{\gamma^*} \in t$ for some $\gamma^*$.


\item\label{tcond-decisiveness} If $\intv{\beta}{\gamma} \in t$, then one of the following holds:

\begin{enumerate}
\item $\cf(\beta) = \omega_1$.
\item $\beta = \beta'+1$ and there is some $\bar \beta \le \beta'$ such that $\intv{\bar \beta}{\beta'} \in t$.
\item $\cf(\beta)=\omega$ and there exists a sequence of ordinals $\seq{\beta_i}{i<\omega}$ converging to $\beta$ such that $\forall i < \omega, \exists \gamma_i,\intv{\beta_i}{\gamma_i} \in t$.
\item $\cf(\beta)=\omega$ and there is some $\bar \beta < \beta$ and some sequence of ordinals $\seq{\gamma_i}{i<\omega}$ converging to $\beta$ such that $\forall i < \omega, \intv{\bar \beta}{\gamma_i} \in t$.
\end{enumerate}


\item\label{tcond-top} There is some $\alpha<\aleph_2$ such that $\intv{\alpha}{\alpha} \in t$ and such that for all $\beta > \alpha$, there is no $\gamma$ such that $\intv{\beta}{\gamma} \in t$. Either $\cf(\alpha) = \omega_1$ or there is some $\seq{\beta_n}{n<\omega}$ cofinal in $\alpha$ such that for all $n<\omega$, $\intv{\beta_n}{\gamma} \in t$ for some $\gamma$.

\item\label{tcond-isathread} Suppose that $\intv{\beta}{\gamma} \in t$ and either $\cf(\beta) = \omega_1$ or else $\{\beta'<\beta: \exists \gamma',\intv{\beta'}{\gamma'} \in t\}$ is unbounded in $\beta$. Then there some $x \in \omega$ such that for all $\beta' \le \gamma' < \beta$, $\intv{\beta'}{\gamma'} \in t$ implies both $\beta' \in C_\beta^x$ and $C_\beta^x \cap \interval[open left]{\beta'}{\gamma'} = \emptyset$.


\end{enumerate}


We have $t' \le_\T t$ if and only if:

\begin{enumerate}
\item $t' \supseteq t$;
\item If $\pair{\alpha}{x}$ is such that for $\beta,\gamma<\alpha$, $\intv{\beta}{\gamma} \in t$ implies $\beta \in C_\alpha^x$ and $C_\alpha^x \cap \interval[open left]{\beta}{\gamma} = \emptyset$, then for $\beta,\gamma<\alpha$, $\intv{\beta}{\gamma} \in t'$ implies $\beta \in C_\alpha^x$ and $C_\alpha^x \cap \interval[open left]{\beta}{\gamma} = \emptyset$.

\end{enumerate}
\end{definition}

We use the following conventions for $t \in \T$:

\begin{itemize}

\item $\pos(t) = \{\beta \ge 0:\exists \gamma, \intv{\beta}{\gamma} \in t\}$.

\item $\max(t)$ is the largest element of $\pos(t)$.


\end{itemize}

\begin{definition} If $s \in \S$, let
\[
\thr(s) = \{( \{\intv{\alpha}{\alpha}\} \cup \{ \intv{\beta}{\gamma}:(\pair{\alpha}{x},\intv{\beta}{\gamma}) \in s \}): \alpha \in \dom(s), x \in \omega\}.
\]
We say that $\pair{\alpha}{x}$ \emph{witnesses} that $t \in \thr(s)$ if $\intv{\alpha}{\alpha} \in t$ and for $\beta,\gamma < \alpha$, $\intv{\beta}{\gamma} \in t$ if and only if $\bit{\alpha}{x}{\beta}{\gamma} \in s$.\end{definition}




Now we can connect the notion of $\thr(s)$ for $s \in \S$ with $\T$ if we are working in $V$.

\begin{proposition}\label{whythethreads} The following are true given $s \in \S$:

\begin{enumerate}
\item If $t \in \thr(s)$ is witnessed by $\pair{\alpha}{x}$ then $s \Vdash ``t \in \dot{\T}"$. 
\item If $s \Vdash ``\dot t \in \dot{\T}"$, then there is some $s' \le s$ and some $t' \in \thr(s')$ such that $s' \Vdash ``t' \le \dot{t}"$.
\end{enumerate}\end{proposition}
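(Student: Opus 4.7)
Part \emph{(1)} is a clause-by-clause verification. \autoref{tcond-ctble}, \autoref{tcond-disjointness}, \autoref{tcond-pointclosure}, and \autoref{tcond-decisiveness} of $\T$ transfer directly from \autoref{cond-size}, \autoref{cond-disjointness}, \autoref{cond-pointclosure}, and \autoref{cond-decisiveness} of $\S$ applied to bits of the form $\bit{\alpha}{x}{\beta}{\gamma}$, while \autoref{tcond-top} holds because $\intv{\alpha}{\alpha} \in t$ by definition of $\thr$ and cofinal bit-positions in the $\cf(\alpha) = \omega$ case are supplied by \autoref{cond-unboundedness}. The crucial \autoref{tcond-isathread} follows from coherence: at each limit $\beta \in \pos(t)$, \autoref{cond-coherence} of $\S$ applied to $\pair{\alpha}{x}$ at $\beta$ yields some $y \in \omega$ with $\bit{\alpha}{x}{\beta'}{\gamma'} \in s \iff \bit{\beta}{y}{\beta'}{\gamma'} \in s$, and since $\bit{\beta}{y}{\beta'}{\gamma'} \in s$ forces $\beta' \in \dot C_\beta^y$ and $\dot C_\beta^y \cap \interval[open left]{\beta'}{\gamma'} = \emptyset$, this $y$ serves as the witness required by $\T$.

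For part \emph{(2)}, I work in two stages. Since $s$ forces $\dot t$ to be countable and $\mathfrak T \in V$, the countable closure of $\S$ (\autoref{closure}) lets me extend $s$ to some $s_1 \le s$ and find $t^* \in V$ with $s_1 \Vdash \dot t = \check t^*$. Letting $\alpha_0 = \max \pos(t^*)$, I choose a limit ordinal $\alpha > \alpha_0$ with $\alpha \notin \dom(s_1)$, and build $s' \le s_1$ by adjoining bits $\bit{\alpha}{x}{\beta}{\gamma}$ for each $\intv{\beta}{\gamma} \in t^*$, plus filler bits cofinal in $\alpha$ above $\alpha_0$ when $\cf(\alpha) = \omega$ (to enforce \autoref{cond-unboundedness}), plus sealing bits analogous to $S_5$ in the proof of \autoref{closure} to enforce \autoref{cond-completeness}. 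Then $t' := \{\intv{\alpha}{\alpha}\} \cup \{\intv{\beta}{\gamma} : \bit{\alpha}{x}{\beta}{\gamma} \in s'\}$ lies in $\thr(s')$ by construction, and the choice of $s'$ is arranged so that $s' \Vdash t' \le_\T \check t^* = \dot t$.

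The main obstacle is verifying \autoref{cond-coherence} for $s'$ at each newly added $\pair{\alpha}{x}$-bit whose first coordinate $\beta \in \pos(t^*)$ is a limit. \autoref{tcond-isathread} applied to $t^*$ at $\beta$ yields a forcing-level witness $y_\beta$, which a further countable extension of $s_1$ can decide; however, $\S$-coherence demands the bit-level equivalence $\bit{\alpha}{x}{\beta'}{\gamma'} \in s' \iff \bit{\beta}{y_\beta}{\beta'}{\gamma'} \in s'$, so I must add matching bits at $\pair{\beta}{y_\beta}$ to $s'$. When $\cf(\beta) = \omega$, which holds for any $\beta$ that is a limit of the countable set $\pos(t^*)$ from below, I can take $y_\beta$ fresh to avoid conflict with existing bits; when $\cf(\beta) = \omega_1$, \autoref{cond-oneomega1club} forces all $y$-coordinates at $\beta$ to share the same bits, so I must rely on the compatibility of $s_1$'s existing bits at $\pair{\beta}{\cdot}$ with $t^*$ guaranteed by the forcing relation. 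A parallel subtlety is preservation of $\le_\T$-witnesses of $t^*$ under passage to $t' \supseteq t^*$, which is why I keep the new bits below $\alpha_0$ exactly aligned with $t^*$'s intervals. Carrying out these constructions recursively over the countably many limits in $\pos(t^*)$ constitutes the technical heart of the argument.
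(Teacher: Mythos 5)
Part \emph{(1)} matches the paper's (terse) argument and is fine.

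Part \emph{(2)} takes a genuinely different route, and the route has a real gap that you acknowledge but do not close. You pick a \emph{fresh} limit ordinal $\alpha$ above $\max(\pos(t^*))$ with $\alpha\notin\dom(s_1)$ and then graft new bits $\bit{\alpha}{x}{\beta}{\gamma}$ onto $s_1$ for each $\intv{\beta}{\gamma}\in t^*$. As you note, the problem is \autoref{cond-coherence}: for each limit position $\beta\in\pos(t^*)$ you must produce a $y_\beta$ so that the bits of $s'$ at $\pair{\alpha}{x}$ below $\beta$ are \emph{exactly} the bits of $s'$ at $\pair{\beta}{y_\beta}$ below $\beta$. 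Your proposed fixes do not work. First, the paper works modulo the dense set where every $\pair{\alpha}{x}$ with $\alpha\in\dom(s)$ carries at least one bit, so there is no ``fresh'' $y_\beta$ at $\pair{\beta}{\cdot}$ to use freely. Second, even granting a choice of $y_\beta$, the bits $s_1$ already has at $\pair{\beta}{y_\beta}$ need only be \emph{compatible} with $t^*$ (via \autoref{tcond-isathread}); \autoref{cond-coherence} demands \emph{exact} agreement, and $s_1$ may well record intervals of $\dot C_\beta^{y_\beta}$ that fall strictly between consecutive intervals of $t^*$. You would then have to add matching $\bit{\alpha}{x}{\cdot}{\cdot}$ bits, which in turn must cohere at \emph{their} limit positions, which need not lie in $\pos(t^*)$ — the recursion you gesture at is not obviously well-founded and you give no argument that it terminates in a legal condition. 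The $\cf(\beta)=\omega_1$ subcase is worse still, since \autoref{cond-oneomega1club} pins down all bits at $\pair{\beta}{\cdot}$ simultaneously, leaving no freedom at all.

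The paper sidesteps all of this by \emph{not} introducing a new club head. Since $s$ forces $\dot t\in\dot\T$, clauses \autoref{tcond-top} and \autoref{tcond-isathread} already force the existence of $\alpha=\max(\dot t)$ and a witness $x\in\omega$ such that every interval of $\dot t$ is compatible with $\dot C_\alpha^x$. One extends to $s^*\le s$ deciding this $\pair{\alpha}{x}$, then uses countable closure to find $s'\le s^*$ deciding $\dot t = t^+\in V$ while also deciding, for each $\intv{\beta}{\gamma}\in t^+$, either that $\bit{\alpha}{x}{\beta}{\gamma}\in s'$ or that some conflicting interval sits at $\pair{\alpha}{x}$. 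Because $s'$ at every stage remains an honest $\S$-condition, \autoref{cond-coherence} is maintained automatically — you are merely recording facts about the \emph{same} $\dot C_\alpha^x$ that the generic is already building, rather than trying to copy $t^*$ onto an unrelated new index. A short forcing argument then shows the conflicting branch is impossible (it would contradict \autoref{tcond-isathread} for $t^+$ at $\alpha$), so all of $t^+$'s intervals appear as $\pair{\alpha}{x}$-bits in $s'$, and $t':=\{\intv{\alpha}{\alpha}\}\cup\{\intv{\beta}{\gamma}:\bit{\alpha}{x}{\beta}{\gamma}\in s'\}$ is the desired element of $\thr(s')$ with $s'\Vdash t'\le\dot t$. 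The moral is that the witness $\pair{\alpha}{x}$ is supplied for free by the definition of $\T$; manufacturing one from scratch is the source of your difficulty.
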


\begin{proof} \emph{(1)} is clear, with the appropriate $\alpha \in \dom s$ and $x \in \omega$ witnessing \autoref{tcond-isathread} from \autoref{newthreading}. \emph{(2)} works as follows: Let $s^* \le s$, $\alpha$, and $x \in \omega$ be such that $s^* \Vdash ``\alpha = \max(\dot t)$ and $\pair{\alpha}{x}$ witnesses \autoref{tcond-isathread} of \autoref{newthreading}''. Then use the countable closure of $\S$ to find $s' \le s^*$ such that $s' \Vdash ``\dot t = t^+ \in V"$ and such that for all $\intv{\beta}{\gamma} \in t^+$, either $\bit{\alpha}{x}{\beta}{\gamma} \in s'$ or else there is some $\bit{\alpha}{x}{\beta'}{\gamma'} \in s'$ such that $\beta \ne \beta'$ and $\intv{\beta}{\gamma} \cap \intv{\beta'}{\gamma'} \ne \emptyset$. Then it must be the case that if $\intv{\beta}{\gamma} \in t^+$ then $\bit{\alpha}{x}{\beta}{\gamma} \in s'$ because otherwise the definition of $\le_{\dot{\T}}$ would be violated. Then let $t'$ be such that $\intv{\alpha}{\alpha} \in t'$ and such that for all $\beta,\gamma<\alpha$, $\intv{\beta}{\gamma} \in t'$ if and only if $\bit{\alpha}{x}{\beta}{\gamma} \in s'$. Then $s'$ and $t'$ witness \emph{(2)}.\end{proof}



The next step is to introduce our version of the two-step iterations that appear in Jensen's method of forcing squares.


\begin{definition}\label{twostepdef} We let $D(\S \ast \T)$ refer to the set of pairs $(s,\dot t) \in \S \ast \T$ such that:

\begin{enumerate}
\item\label{stcond-threads} $s \Vdash ``\dot t = \check t"$ for some $t \in \thr(s)$.
\item\label{stcond-maxima} $s \Vdash ``\max(\dot t)=\max(s) \ge \sup(\out(s))"$.
\item $s$ is complete.

\end{enumerate}\end{definition}

If $(s,\check t) \in D(\S \ast \T)$, we will most often write $(s, \check t)$ as $(s,t)$.

\begin{lemma}\label{twostepclosure} The set $D(\S \ast \T)$ is dense in $\S \ast \T$ and countably closed. Moreover, if a sequence $\seq{(s_i,t_i)}{i<\omega}$ is $\le_{\S \ast \T}$-decreasing, then it has a lower bound $(\bar s,\bar t)$ such that $\bar s$ is a parsimonious lower bound of $\seq{s_i}{i<\omega}$.\end{lemma}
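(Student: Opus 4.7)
The plan is to prove density and countable closure separately, building on \autoref{closure} and \autoref{whythethreads}.

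For density, given $(s, \dot t) \in \S \ast \T$, invoke \autoref{whythethreads}(2) to find $s_1 \le s$ and $t_1 \in \thr(s_1)$ with $s_1 \Vdash t_1 \le \dot t$, witnessed by some $\pair{\alpha_1}{x_1}$. To satisfy \autoref{stcond-maxima}, extend once more by picking $\alpha^* > \max(s_1) \cup \sup(\out(s_1))$ of cofinality $\omega$, and form $s_2$ by adjoining to $s_1$, for every $y \in \omega$, the bit $\bit{\alpha^*}{y}{\beta}{\gamma}$ for each $\bit{\alpha_1}{x_1}{\beta}{\gamma} \in s_1$, together with $\bit{\alpha^*}{y}{\alpha_1}{\alpha_1}$, a cofinal $\omega$-sequence of successor interval bits in $(\alpha_1, \alpha^*)$ to enforce \autoref{cond-unboundedness} at $\alpha^*$, and sealing-off bits analogous to $S_5$ of \autoref{closure} to enforce \autoref{cond-completeness}. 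Letting $t_2$ be the thread in $\thr(s_2)$ witnessed by $\pair{\alpha^*}{x^*}$ for any $x^* \in \omega$ yields $(s_2, t_2) \le (s, \dot t)$ in $D(\S \ast \T)$.

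For countable closure, let $\seq{(s_i, t_i)}{i<\omega}$ be $\le_{\S\ast\T}$-decreasing in $D(\S\ast\T)$, and write $\alpha_i = \max(s_i) = \max(t_i)$ with $x_i$ the witness for $t_i \in \thr(s_i)$. Since $t_{i+1} \supseteq t_i$ forces $\max(t_{i+1}) \ge \max(t_i)$, the sequence $\seq{\alpha_i}{i<\omega}$ is non-decreasing; set $\alpha^* = \sup_i \alpha_i$. If the $\alpha_i$ are eventually equal to $\alpha^*$, take $\bar s$ to be the parsimonious lower bound from \autoref{closure} and $\bar t = t_{i^*}$ for any sufficiently large $i^*$; here $\alpha^* \in \ed(\vec s)$ is already the maximum of $\dom(\bar s)$. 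If the $\alpha_i$ are strictly increasing, then $\alpha^* \in \fd(\vec s) \setminus \fp(\vec s)$ because no element of $\ed(\vec s)$ exceeds $\alpha^*$; modify the parsimonious construction by replacing the generic $S_4$-bits at $\alpha^*$ with thread bits: for every $y \in \omega$ and every $\intv{\beta}{\gamma} \in \bigcup_i t_i$ put $\bit{\alpha^*}{y}{\beta}{\gamma}$ into $\bar s$, adjoining bits analogous to $S_5$ to seal off. Since each such $\beta$ lies in $\ep(\vec s)$ (because $\intv{\beta}{\gamma} \in t_j$ entails $\bit{\alpha_j}{x_j}{\beta}{\gamma} \in s_j$), parsimoniousness is preserved. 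Set $\bar t = \{\intv{\alpha^*}{\alpha^*}\} \cup \bigcup_i t_i$.

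The hard part is verifying, in the strictly-increasing case, the clauses of \autoref{newforcing} for $\bar s$---in particular \autoref{cond-coherence} at each $\alpha_i$ for the new $\dot{C}_{\alpha^*}^y$'s. The key point is that $\{\intv{\beta}{\gamma} \in \bigcup_i t_i : \gamma<\alpha_i\}$ matches the set of $(\beta,\gamma)$ with $\bit{\alpha_i}{x_i}{\beta}{\gamma} \in \bigcup_j s_j$, so that $x_i$ itself witnesses coherence. The forward inclusion uses the $\T$-ordering: since $\pair{\alpha_i}{x_i}$ witnesses $t_i \in \thr(s_i)$, its ``thread-witness'' property is preserved in every $t_j$ for $j \ge i$, forcing each interval of $t_j$ below $\alpha_i$ to correspond to a bit at $\pair{\alpha_i}{x_i}$ in $s_j$; the reverse inclusion is dual. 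The remaining clauses of \autoref{newforcing}---most importantly \autoref{cond-pointclosure} and \autoref{cond-completeness}---are handled as in the proof of \autoref{closure} because the thread bits at $\alpha^*$ only use positions in $\ep(\vec s) \cup \fp(\vec s)$. Finally $(\bar s, \bar t) \in D(\S \ast \T)$ holds because $\out(\bar s) \subseteq \alpha^* = \max(\bar s) = \max(\bar t)$, and $(\bar s, \bar t) \le (s_i, t_i)$ for each $i$ follows by construction and the coherence just established.
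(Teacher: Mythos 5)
Your overall strategy matches the paper's: prove density via \autoref{whythethreads} plus a top extension, and prove closure by taking a parsimonious lower bound $\bar s$ from \autoref{closure} and grafting a thread-club at the new maximum $\alpha^*$. The density argument is essentially the paper's, and your case split in the closure argument (eventually-constant maxima versus strictly increasing maxima) also mirrors the paper. But there is a real gap on the $\T$ side in both of your cases, and it stems from not accounting for the fresh limit points of the union of threads.

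In the strictly-increasing case you set $\bar t = \{\intv{\alpha^*}{\alpha^*}\} \cup \bigcup_i t_i$, and correspondingly put only the bits $\bit{\alpha^*}{y}{\beta}{\gamma}$ for $\intv{\beta}{\gamma} \in \bigcup_i t_i$ into $\bar s$. This omits the fresh limit points of $\pos(\bigcup_i t_i)$. If $\beta'$ is such a point (which can certainly occur: the $t_i$'s can accumulate positions that converge to some $\beta'$ that never itself appears in any $\pos(t_i)$), then the resulting $\bar t$ violates \autoref{tcond-pointclosure} of \autoref{newthreading}, and $\bar s$ violates \autoref{cond-pointclosure} of \autoref{newforcing} at $\pair{\alpha^*}{y}$. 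The paper handles this by explicitly forming $t^* = \bigcup_i t_i \cup \{\intv{\beta}{\beta} : \beta \in \fp(t)\}$ (where $\fp(t) = \limp(\ep(t)) \setminus \ep(t)$) before building the thread bits at $\bar\alpha$, and then sealing off. Your claim that all thread positions lie in $\ep(\vec s) \cup \fp(\vec s)$ remains true after this correction, so parsimoniousness is not the problem, but the missing $\fp(t)$ bits are.

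The eventually-constant case has a separate error: $\bar t = t_{i^*}$ is generally not a lower bound of $\seq{t_i}{i<\omega}$. Even with $\max(t_i)$ fixed at $\alpha^*$, the conditions $s_i$ keep acquiring new bits at $\pair{\alpha^*}{x_i}$, so the $t_i$'s keep acquiring new intervals below $\alpha^*$; $t_{i^*}$ will then fail to contain $t_j$ for $j > i^*$. Moreover, the parsimonious $\bar s$ will contain $S_1$-type fresh-point bits at $\pair{\alpha^*}{x_{i^*}}$, so $t_{i^*} \notin \thr(\bar s)$. Here too one must take $\bar t$ read off from $\bar s$ at the appropriate pair, i.e.\ with positions $\pos(\bigcup_i t_i) \cup \fp(t)$, as the paper does via $t^*$ and $\tilde t$.

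A minor but notable divergence: at $\alpha^*$ you overwrite the $S_4$-bits with thread bits at every index $y \in \omega$, whereas the paper shifts the existing $S_4$-bits to indices $x+1$ and places the thread only at index $0$. Both are permissible here since $\alpha^*$ has countable cofinality (so \autoref{cond-oneomega1club} imposes nothing), and since no $s_i$ has bits at $\alpha^*$ there is no compatibility constraint. Your coherence check via ``$x_i$ itself witnesses coherence'' is plausible but glosses over the fact that the coherence witness for $\dot C_{\alpha_j}^{x_j}$ at $\alpha_i$ in $s_j$ need not literally equal $x_i$; the paper instead locates an intermediate $\beta^* \in (\beta,\bar\alpha)$ in $\ep(t)$ satisfying \autoref{tcond-decisiveness}(a) or (c) and transfers the coherence witness from $\dot C_{\beta^*}^{x}$. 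You should justify why the particular index you name really does match, or adopt the paper's more agnostic route.
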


\begin{proof} We will prove each claim separately. The ``moreover'' part of the statement will be clear from the proof.

\begin{claim}\label{twostepclosure-density} $D(\S \ast \T)$ is dense in $\S \ast \T$.\end{claim}

Suppose $(s,\dot t) \in \S \ast \T $. Let $s'$ and $t'$ witness \autoref{whythethreads} with respect to $(s,\dot t)$. Then choose $s'' \le s'$, $t'' \in \thr(s'')$, and a large enough $\alpha^*$ such that $(s'',\check{t}'') \le (s',\check{t}')$ and such that $\alpha^* = \max(t'' ) = \max(s'') \ge \sup(\out(s''))$ as follows: Let $\beta^* = \sup\{\gamma+1:\exists \beta, \intv{\beta}{\gamma} \in t'\}$ and choose $\alpha^*$ such that $\max(s') \cup \sup(\out(s')) < \alpha^* $. Let $\seq{\alpha_n}{n<\omega}$ be a sequence with supremum $\alpha^*$ such that $\alpha_0 = \beta^*$ and $\alpha_n$ is a successor ordinal for $n>0$. Let $s''$ be
\[
s' \cup \{\bit{\alpha^*}{x}{\alpha_n}{\alpha_{n+1}-1}:x,n \in \omega\} \cup \{\bit{\alpha^*}{x}{\beta}{\gamma}:\intv{\beta}{\gamma} \in t', x \in \omega\}.
\]
Let $t''$ be such that $\intv{\alpha^*}{\alpha^*} \in t''$ and such that for all $\beta,\gamma<\alpha^*$, $\intv{\beta}{\gamma} \in t''$ if and only if $\bit{\alpha^*}{x}{\beta}{\gamma} \in s''$ for all $x \in \omega$.


\begin{claim}\label{twostepclosure-closure} $D(\S \ast \T)$ is countably closed.\end{claim}

Let $\seq{(s_i,t_i)}{i<\omega}$ be $\le_{\S \ast \T }$-decreasing in $D(\S \ast \T)$. Let $s^*$ be a parsimonious lower bound of $\vec s = \seq{s_i}{i<\omega}$. We use some definitions with the goal of eventually constructing $\bar t$:

\begin{itemize}

\item $T_0 = \bigcup_{i<\omega}t_i $,
\item $\ep(t) = \{\beta : \exists \gamma, \intv{\beta}{\gamma} \in T_0\}$,
\item $\fp(t) = \limp(\ep(t)) \setminus \ep(t)$,
\item $t^* = T_0 \cup \{ \intv{\beta}{\beta} : \beta \in \fp(t)\}$.

\end{itemize}

Now let $\bar \alpha$ be the supremum of $\ep(t) \cup \fp(t)$. If there is some $i$ such that $\bar \alpha \in \dom(s_i)$, in other words $\bar \alpha \in \ep(t)$, then it will be the case that $\pos(t^*) = \pos(\tilde t)$ for some $\tilde t \in \thr(\bar s)$.  Then we can let $\bar s =s^*$ and let $\bar t$ be $\tilde t$. Let us therefore assume for the rest of the proof that for all $i<\omega$, $\bar \alpha \notin \dom s_i$. Then we will define a lower bound $(\bar s,\bar t)$ of $\seq{(s_i,t_i)}{i<\omega}$ by modifying $s^*$ (we will \emph{not} necessarily have that $\bar{s}$ and $s^*$ are comparable) and we will argue that $(\bar{s},\bar{t}) \in D(\S \ast \T )$.

We will define the modification $\bar s$ as follows:

\begin{itemize}

\item If $\alpha \ne \bar \alpha$, then $(\pair{\alpha}{x},\intv{\beta}{\gamma}) \in \bar{s}$ if and only if $(\pair{\alpha}{x},\intv{\beta}{\gamma}) \in s^*$.

\item $(\pair{\bar \alpha}{x+1},\intv{\beta}{\gamma}) \in \bar{s}$ if and only if $(\pair{\bar \alpha}{x},\intv{\beta}{\gamma}) \in s^*$.

\item $(\pair{\bar \alpha}{0},\intv{\beta}{\gamma}) \in \bar{s}$ if $\intv{\beta}{\gamma} \in t^*$ and $\gamma < \bar \alpha$.

\end{itemize}

We need to argue that $\bar s \in \S$. It suffices to show that \autoref{cond-coherence}, i.e$.$ coherence, from \autoref{newforcing} holds for bits of the form $\bit{\bar\alpha}{0}{\beta'}{\gamma'}$ i.e$.$ for coherence of $\dot{C}^0_{\bar{\alpha}}$. Let $\beta$ be such that either $\cf(\beta) = \omega_1$ or else $\cf(\beta)=\omega$ and $\{\beta'<\beta: \exists \gamma',(\pair{\alpha}{x},\intv{\beta'}{\gamma'}) \in s\}$ is unbounded in $\beta$. Observe that there is some $\beta^* \in (\beta,\alpha)$ such that $\beta^* \in \ep(t)$ and either Case (a) or Case (c) of \autoref{tcond-decisiveness} of \autoref{newthreading} holds for $\beta^*$. Let $x$ be such that $\pair{\beta^*}{x}$ witnesses \autoref{tcond-isathread} of \autoref{newthreading}. Then $\beta^* \in \dom s^*$ and $s^* \Vdash ``\beta \in \lim \dot{C}_{\beta^*}^x$'', so let $y$ witness coherence of $\dot{C}_{\beta^*}^x$ at $\beta$. Then $y$ witnesses coherence of $\dot{C}^0_{\bar \alpha}$ at $\beta$. Coherence for $\dot{C}^0_x$ with $x>0$ is given by the ``shift'' in the definition of $\bar{s}$.




We let $\bar t$ be defined so that $\intv{\bar \alpha}{\bar \alpha} \in \bar t$ and so that for all $\beta,\gamma < \bar \alpha$, $\intv{\beta}{\gamma} \in \bar t$ if and only if $\bit{\bar \alpha}{0}{\beta}{\gamma} \in \bar s$. We claim that $(\bar{s},\bar{t}) \in D(\S \ast \T)$. It can be checked that $\bar{s}$ is a lower bound of $\seq{s_i}{i<\omega}$: Observe that $\bar \alpha = \max(\bar s)= \max (s^*)$ because $s^*$ was chosen to be parsimonious and because \autoref{stcond-maxima} and \autoref{stcond-threads} from \autoref{twostepdef} hold for $(s_i,t_i)$ for all $i<\omega$. In particular, the fact that $\sup(\out(s_i)) \le \max(s_i)$ for all $i<\omega$ implies that $\fp(\vec s)\subseteq \max(s^*)$. We only needed to guarantee that we would have $\bar t \in \thr(\bar s)$.\end{proof}

We therefore see that $\T$ preserves $\aleph_1$ over $V[\S]$ since it is a factor of a countably distributive iteration. Moreover, we can expand the proof from \autoref{squaremaster} to see:

\begin{proposition}\label{threadcardpres} $\S \ast \T$ preserves $\aleph_2$ over $V$.\end{proposition}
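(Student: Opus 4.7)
The plan is to adapt the proof of \autoref{squaremaster} to the two-step iteration, showing that every $(s, t) \in D(\S \ast \T)$ lying in a basic elementary submodel $M \prec K$ with $\S \ast \T \in M$ is strongly $(M, \S \ast \T)$-generic. Combined with \autoref{twostepclosure} (giving density and countable closure of $D(\S \ast \T)$) and the existence of basic models from \autoref{basicprop}, the preservation of $\aleph_2$ then follows by the argument in \autoref{aleph2-pres} applied to $\S \ast \T$.

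Fix $(s', t') \le_{\S \ast \T} (s, t)$; by density I may assume $(s', t') \in D(\S \ast \T)$. Let $\delta = M \cap \aleph_2$ and define $s^* = \{(\pair{\alpha}{x}, \intv{\beta}{\gamma}) \in s' : \alpha, \beta, \gamma < \delta\}$, which lies in $M$ by the countable closure of $M$. To build a reduction $(s^*, t^*) \in (\S \ast \T) \cap M$, I would let $x''$ witness $t' \in \thr(s')$ at $\max(s')$ and, using \autoref{cond-coherence} applied to the pair $\pair{\max(s')}{x''}$ together with the fact that $s'$ is countable while $\cf(\delta) = \omega_1$, find some $\alpha^* \in \dom(s^*)$ that $s'$ forces to be a limit of $\dot{C}^{x''}_{\max(s')}$; choosing $x^* \in \omega$ to witness the coherence at $\alpha^*$, I set $t^*$ to be the thread in $\thr(s^*)$ witnessed by $\pair{\alpha^*}{x^*}$.

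For the main step, given $(s^{**}, t^{**}) \le (s^*, t^*)$ in $(\S \ast \T) \cap M$ (WLOG in $D(\S \ast \T)$ by density, since $D(\S \ast \T) \in M$), I will construct a common extension $(s'', t'')$ of $(s', t')$ and $(s^{**}, t^{**})$ as follows. Construct $s'' = s^{**} \cup s' \cup S_0 \cup S_1$ exactly by the recipe from \autoref{squaremaster}, with $S_0$ adding the coherence-driven bits linking $\pair{\alpha}{x}$-clubs for $\alpha \in \dom(s') \setminus \delta$ to their limit-point clubs in $s^{**}$, and $S_1$ sealing off. Then define $t''$ as the thread in $\thr(s'')$ witnessed by $\pair{\max(s')}{x''}$. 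Because $\pair{\alpha^*}{x^*}$ belongs to the witness set $X_{\pair{\max(s')}{x''}}$ used to define $S_0$, the $S_0$-bits at level $\pair{\max(s')}{x''}$ mirror the bits of $s^{**}$ at level $\pair{\alpha^*}{x^*}$, so $t''$ extends $t^{**}$; and $t''$ extends $t'$ automatically from $s'' \supseteq s'$.

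The main obstacle is verifying that $(s'', t'') \in \S \ast \T$. The proof that $s'' \in \S$ and that $s'' \le s', s^{**}$ in $\S$ carries over essentially unchanged from \autoref{squaremaster}. The delicate point is confirming that $s'' \Vdash t'' \in \dot\T$ — in particular clause \autoref{tcond-isathread} of \autoref{newthreading} — at every level of $\dot{C}^{x''}_{\max(s')}$ where coherence is triggered, especially at the interface $\alpha^*$ between the old $s'$-structure and the new $s^{**}$-structure, as well as confirming that the ordering clause of $\le_{\S \ast \T}$ survives. Both should follow by the design of $S_0$, since the bits added there propagate the coherence witnesses $x^*$ at $\alpha^*$ and $x''$ at $\max(s')$ precisely to the places where $t''$ acquires new limit points; but careful bookkeeping through the subcases analogous to those in \autoref{closure} will be required.
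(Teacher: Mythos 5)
The key point your argument misses is that the master condition is not $(s,t)$ itself but $(s, t \cup \{\intv{\delta}{\delta}\})$, i.e.\ you must first cap the thread at $\delta = M \cap \aleph_2$, as the paper does. Without that cap, the strong genericity claim as you have stated it is actually false, and the gap shows up precisely at the interface with $\delta$ that you flagged as the ``delicate point.''

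Here is the problem. Take $(s,t) \in D(\S\ast\T) \cap M$ and extend it to $(s',t')$ with $\max(s') > \delta$ so that the thread $t'$ contains an interval $\intv{\max(t)}{\gamma}$ with $\gamma \ge \delta$ (this is a perfectly legal way to extend: add a new top level far above $\delta$ whose club skips directly from $\max(t)$ to some ordinal above $\delta$). Now consider any candidate reduction $(s^*,t^*) \in (\S\ast\T)\cap M$. The thread $t^*$ has $\max(t^*) < \delta$ and, to stay compatible with $t'$, must in fact have $\max(t^*) \le \max(t)$, and whatever interval it declares above $\max(t)$ has right endpoint $\gamma' < \delta$. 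But then there is an extension $(s^{**},t^{**}) \le (s^*,t^*)$ inside $M$ whose thread adds a position $\beta_1$ with $\gamma' < \beta_1 < \delta$. Since $\beta_1 \in (\max(t),\gamma]$, the interval $\intv{\beta_1}{\cdot}$ of $t^{**}$ violates the disjointness clause (\autoref{tcond-disjointness}) against $\intv{\max(t)}{\gamma} \in t'$, so $(s^{**},t^{**})$ and $(s',t')$ have no common extension. Thus no $(s^*,t^*) \in M$ is a reduction of this particular $(s',t')$, and $(s,t)$ is not strongly $(M,\S\ast\T)$-generic. Your choice of $\alpha^*$ and $x^*$ and the $S_0$/$S_1$ mirroring of the $\S$-part are all fine for the $\S$-side of the iteration, but they cannot repair this: the reduction simply cannot encode the hole $(\max(t),\gamma]$ of $t'$ when $\gamma \ge \delta$.

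What the paper does instead is start from the condition $(s, t\cup\{\intv{\delta}{\delta}\})$. Since $\cf(\delta)=\omega_1$, this is a valid strengthening, and once $\delta$ is a position of the thread, every extension $(s',t')$ must keep $\delta$ as a position. By \autoref{tcond-disjointness}, no interval of $t'$ can then straddle $\delta$, so every hole of $t'$ below $\delta$ has both endpoints below $\delta$ and therefore lies in $M$; moreover, the thread structure below $\delta$ is carried by the bits of $s'$ at level $\delta$ (via \autoref{tcond-isathread} and \autoref{cond-coherence}), all of which have parameters below $\delta$. This is exactly what makes a correct reduction in $M$ possible. Your remaining construction of $(s'',t'')$ via $S_0$, $S_1$ and the coherence witnesses would then go through, but only after that cap is in place.
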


\begin{proof} Specifically, suppose that $s \in \S$, $t \in \T$, and $M \prec K$ is a basic model with $s,\S \ast \T \in M$, and $\delta = M \cap \aleph_2$. Then it can be argued that $(s,t \cup \intv{\delta}{\delta})$ is strongly $(M,\S \ast \T)$-generic. Then apply \autoref{basicprop}.
\end{proof}

Although $\T$ preserves cardinals, it does not preserve the canonical $\square(\aleph_2,\aleph_0)$-sequence added by $\S$. This is the primary function of $\T$ in some sense, despite its usefulness for lifting embeddings (which will be established shortly).

\begin{proposition} If $G_\S \ast G_\T$ is $\S \ast \T$-generic over $V$, and $\mathcal{D} = \{\beta < \aleph_2:\exists \gamma, \intv{\beta}{\gamma} \in G_\T\}$, then $\mathcal{D}$ is a thread of the $\square(\aleph_2,\aleph_0)$-sequence $\mathcal{C}$ derived from $G_\S$.\end{proposition}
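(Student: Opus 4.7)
The plan is to verify in $V[G_\S \ast G_\T]$ that $\mathcal{D}$ is a club subset of $\aleph_2$ and that for every $\beta \in \lim(\mathcal{D})$ there is some $x \in \omega$ with $\mathcal{D} \cap \beta = C_\beta^x$, so that $\mathcal{D} \cap \beta \in \mathcal{C}_\beta$.

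Unboundedness is handled by a direct density argument using \autoref{twostepclosure}: given $\alpha < \aleph_2$, the density of $D(\S \ast \T)$ from \autoref{twostepclosure-density} allows us to extend any condition to one whose top $\max(t')$ lies above $\alpha$, placing $\max(t') \in \mathcal{D}$. For closedness at a limit $\beta$ of countable cofinality, I would select an $\omega$-cofinal sequence from $\mathcal{D} \cap \beta$, gather it into a single condition using directedness of $G_\S \ast G_\T$ and the countable closure of $D(\S \ast \T)$, and then apply either \autoref{tcond-pointclosure} or \autoref{tcond-top} of \autoref{newthreading} to conclude $\beta \in \pos(t) \subseteq \mathcal{D}$. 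Closedness at $\omega_1$-cofinality limits is a density argument showing that the set of conditions placing $\beta$ at the top of the thread is dense, exploiting \autoref{tcond-decisiveness}(a) and \autoref{tcond-top} of \autoref{newthreading} which allow $\beta$ itself to serve as a top when $\cf(\beta) = \omega_1$, alongside \autoref{cond-oneomega1club} of \autoref{newforcing} on the $\S$-side.

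The coherence condition is the heart of the argument. Given $\beta \in \lim(\mathcal{D})$, pick $(s, t) \in G_\S \ast G_\T$ with $\intv{\beta}{\gamma} \in t$ and the hypothesis of \autoref{tcond-isathread} of \autoref{newthreading} met at $\beta$. That clause yields $x \in \omega$ for which $\pos(t) \cap \beta$ aligns with $C_\beta^x$, and the second clause of the ordering $\le_\T$ propagates this witness $x$ to every stronger condition, giving $\mathcal{D} \cap \beta \subseteq C_\beta^x$. For the reverse inclusion, I would show that for each $\beta' \in C_\beta^x$ the set of conditions with $\beta' \in \pos(t)$ is dense below $(s, t)$. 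The key geometric observation is that combining \autoref{tcond-isathread} with the coherence of $\vec{\mathcal{C}}$, the hole-intervals $\intv{\beta''}{\gamma''}$ of any $t' \le t$ below $\beta$ begin in $C_\beta^x$ and satisfy $C_\beta^x \cap (\beta'', \gamma''] = \emptyset$; hence $\beta'$ lies in no such hole, so the bit $\intv{\beta'}{\beta'}$ can be adjoined to $t'$ together with a compatible bit $(\pair{\max(t')}{x'},\intv{\beta'}{\beta'})$ in $s'$ at the top witness of $t'$, and then sealed off as in the density proof of \autoref{twostepclosure-density}.

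The main obstacle is this reverse-inclusion density step: while the geometric picture is clean, one must confirm that the resulting extension satisfies every clause of both \autoref{newforcing} and \autoref{newthreading}, most notably the coherence clause \autoref{cond-coherence} at the new bit and the decisiveness clause \autoref{tcond-decisiveness} at $\beta'$, which depending on the cofinality of $\beta'$ may require supplementary bits to be added in a manner analogous to the construction of parsimonious lower bounds in \autoref{closure} and \autoref{twostepclosure}.
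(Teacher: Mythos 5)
The paper states this proposition without proof, so there is no argument of the paper to compare against; one must judge your proposal on its own terms. Your overall structure is sound: unboundedness from the density of $D(\S\ast\T)$, closedness of $\mathcal{D}$ from \autoref{tcond-pointclosure} plus a density/genericity argument, and the forward inclusion $\mathcal{D}\cap\beta\subseteq C_\beta^x$ from \autoref{tcond-isathread} together with the persistence clause in $\le_\T$ are all correct.

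The gap is in the reverse inclusion. The claim that the set of conditions with $\beta'\in\pos(t)$ is dense below $(s,t)$ is not true in the form you need, and the ``adjoin $\intv{\beta'}{\beta'}$ and seal off'' construction fails in general. The obstruction: if $\gamma_0 = \max(\pos(t')\cap\beta')$ and $C_\beta^x\cap(\gamma_0,\beta')$ is uncountable (which is entirely possible, since a condition records only countably many bits of the partial $\dot{C}_\beta^x$ while the full club may have size $\aleph_1$), then you can neither place $\beta'$ in a hole (\autoref{tcond-isathread} at $\beta$ forbids a hole whose interior meets $C_\beta^x$) nor add enough new positions to make a legal condition (you would need uncountably many of them to avoid manufacturing an illegal hole, but conditions are countable). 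So no extension of such a $t'$ has $\beta'\in\pos$, and the density you invoke does not hold in $\T$ over $V[G_\S]$. This is precisely the sort of case -- $\cf(\beta')=\omega_1$ with $\beta'$ isolated in $C_\beta^x$, or a big stretch of $C_\beta^x$ sitting unacknowledged in a gap -- that your last paragraph flags but does not resolve; it is not merely a matter of ``supplementary bits'' in the style of \autoref{closure}.

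The correct route to the reverse inclusion goes through $\S$ rather than through $\T$. By \autoref{twostepclosure}, $G$ concentrates on $D(\S\ast\T)$, where $t\in\thr(s)$ and, by \autoref{cond-coherence}, the intervals of $t$ below $\beta$ are \emph{exactly} the bits of $s$ at coordinate $\pair{\beta}{y}$. Hence $\pos(t)\cap\beta$ is, literally, the partial $\dot{C}_\beta^y$ carried by $s$; as $G_\S$ decides more of $\dot{C}_\beta^y$, each new point decided into it appears as a position of the accompanying thread via completeness and coherence of $s$, so $\mathcal{D}\cap\beta=C_\beta^y$ follows from genericity of $G_\S$, not from a density argument carried out inside $\T$. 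To make this fully rigorous one also has to check that the witness $y$ stabilizes along $G$ (via the second clauses of $\le_\S$ and $\le_\T$), a point your proposal does not address.
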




The following lemma shows how we will use a weakly compact cardinal together with $\S$ by lifting embeddings, using $\T$ in a crucial way. In the context of Jensen's method for forcing squares, we would use a generic condition argument for $j(\S)$. However, when we have lifted $j:\mathcal{M}[\Col(\aleph_1,<\!\kappa)] \to \mathcal{N}[\Col(\aleph_1,<\!j(\kappa))]$ and are working in $V[\Col(\aleph_1,<\!j(\kappa)$], we have $|\S|=\aleph_1$, yet the conditions in both $\S$ and $j(\S)$ are countable. Therefore we use a factorization argument instead.

\begin{lemma}\label{basiclifting} Suppose the following:
\begin{itemize}
\item $\mathfrak{C}=\Col(\aleph_1,<\kappa)$ is the L{\'e}vy collapse and $G_\mathfrak{C}$ is $\mathfrak{C}$-generic over $\bar V$.
\item $j:\mathcal{M} \to \mathcal{N}$ is a weakly compact embedding with critical point $\kappa$.
\item $j:\mathcal{M}[G_\mathfrak{C}] \to \mathcal{N}[j(G_\mathfrak{C})]$ is the usual lifted embedding given by $\dot x_{G_\mathfrak{C}} \mapsto j(\dot x)_{j(G_\mathfrak{C})}$.
\item $V[j(G_\mathfrak{C})]$ contains generics $G_\S$ and $G_\T$ for $\S$ and $\T$ respectively.
\end{itemize}
Then in $V[j(G_\mathfrak{C})]$, $j(\S)$ is forcing equivalent to $\S \ast \T \ast \S'$ where $\S'$ is a countably closed forcing over $V[j(G_\mathfrak{C})]$.\end{lemma}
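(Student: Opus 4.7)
The plan is to define a projection $\pi : j(\S) \to \S \ast \dot{\T}$ by decomposing each condition according to where its bits sit relative to the critical point $\kappa$. Since $\Col(\aleph_1,<\!j(\kappa))$ is $\sigma$-closed and $\kappa$ was regular in $\bar V$, we have $\cf^{\bar V[j(G_\mathfrak{C})]}(\kappa) = \omega_1$. Thus for $s \in j(\S)$ with $\kappa \in \dom(s)$, \autoref{cond-oneomega1club} ensures that
\[
t_s := \{\intv{\beta}{\gamma} : (\pair{\kappa}{0}, \intv{\beta}{\gamma}) \in s\} \cup \{\intv{\kappa}{\kappa}\}
\]
is independent of the choice of label, and \autoref{cond-coherence} applied at $\kappa$ guarantees that $t_s$ satisfies \autoref{tcond-isathread} of \autoref{newthreading} and is a condition of $\T$. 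By elementarity, $s \nrest \kappa \in \S$. The quotient $\S' := j(\S) / G_{\S \ast \T}$ will then be the third factor.

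First I would show that the set $D = \{s \in j(\S) : \kappa \in \dom(s), \max(s) \ge \kappa, \sup(\out(s) \cap \kappa) < \kappa\}$ is dense in $j(\S)$, using the sealing-off construction of \autoref{twostepclosure-density} inside $\mathcal{N}[j(G_\mathfrak{C})]$. Define $\pi : D \to \S \ast \dot{\T}$ by $\pi(s) = (s \nrest \kappa, \check{t_s})$. Order-preservation is immediate. For the projection property, given $s \in D$ and $(s_0', t') \le \pi(s)$, I would glue $s_0'$, $t'$, and $s$ along their coherence constraints at $\kappa$ to produce $s' \le s$ in $j(\S)$ with $\pi(s') \le (s_0', t')$; the glue is a minor generalization of the construction of $s''$ in the proof of \autoref{squaremaster}.

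To verify countable closedness of $\S'$ in $\bar V[j(G_\mathfrak{C})][G_{\S \ast \T}]$, take a $\le_{\S'}$-decreasing sequence $\seq{s_i}{i<\omega}$. Use the countable closure of $\S \ast \T$ (\autoref{twostepclosure}) together with the fact that $G_{\S \ast \T}$ is countably directed for a countably closed forcing to find $(s^+, t^+) \in G_{\S \ast \T}$ with $(s^+, t^+) \le \pi(s_i)$ for all $i$. Apply \autoref{closure} in $\mathcal{N}[j(G_\mathfrak{C})]$ to obtain a parsimonious lower bound $\bar s_0$ of $\seq{s_i}{i<\omega}$ in $j(\S)$, and glue it with $(s^+, t^+)$ to produce $\bar s \in j(\S)$ with $\bar s \nrest \kappa = s^+$, $t_{\bar s} = t^+$, and $\bar s \le s_i$ for all $i$, so $\bar s \in \S'$. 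The main obstacle is ensuring the glue produces a valid $j(\S)$-condition: the coherence clause \autoref{cond-coherence} at $\kappa$ ties the bits at $\kappa$ (determined by $t^+$) both to those in $s^+$ below $\kappa$ and to those at ordinals $\alpha > \kappa$ in $\dom(\bar s)$ for which $\kappa \in \lim(\dot{C}^x_\alpha)$. Parsimony of $\bar s_0$ guarantees that no extraneous coherence witnesses appear above $\kappa$, and each $s_i$ already reconciles coherence with its own $\pi(s_i)$, so $\bar s$ inherits the necessary coherence by the same case analysis used in \autoref{closure} and \autoref{twostepclosure}.
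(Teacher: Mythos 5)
Your overall strategy---decomposing a $j(\S)$-condition according to where its bits sit relative to $\kappa$, and treating the $\kappa$-bits as the $\T$-part---matches the spirit of the paper, but there is a genuine gap in the projection map $\pi$ that prevents the argument from going through as written.

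The object $t_s := \{\intv{\beta}{\gamma} : (\pair{\kappa}{0},\intv{\beta}{\gamma}) \in s\} \cup \{\intv{\kappa}{\kappa}\}$ is not a condition of $\T$. By \autoref{newthreading}, $\mathfrak{T}$ consists of intervals $\intv{\beta}{\gamma}$ with $\gamma < \aleph_2$, and $\aleph_2 = \kappa$ in $V[G_\mathfrak{C}]$, so $\intv{\kappa}{\kappa}$ is not even a legal bit; moreover \autoref{tcond-top} requires that $\max(t) < \aleph_2$ for every condition. So $\pi$ is not a map into $\S \ast \T$ at all. Furthermore, even if you discard $\intv{\kappa}{\kappa}$ and put a cap at the supremum of the $\kappa$-bits, you still need $s \nrest \kappa \Vdash ``t_s \in \dot\T"$, and in particular that the limit points of $t_s$ are forced to satisfy \autoref{tcond-isathread}. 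The bits of $s$ at $\kappa$ give you coherence witnesses inside $j(\S)$, but that does not automatically translate to a statement that the countable subcondition $s \nrest \kappa$ forces by itself: you need $t_s$ to coincide with the bits of $s \nrest \kappa$ at some $\pair{\alpha}{x}$ with $\alpha \in \dom(s\nrest\kappa)$, which is precisely the content of ``$t \in \thr(s\nrest\kappa)$.'' The paper solves both problems at once by working with $D(\S\ast\T)$, where the pair $(s,t)$ has $t \in \thr(s)$ and $\max(t) = \max(s) < \kappa$; the translation of $t$ into $\kappa$-bits happens only on the $j(\S)$ side, via the explicit map $\Psi(s,t,s') = s \cup s' \cup \{(\pair{\kappa}{x},\intv{\beta}{\gamma}):x\in\omega,\intv{\beta}{\gamma}\in t\}$, rather than by restricting a $j(\S)$-condition downward.

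The second, smaller issue is that you take $\S'$ to be the quotient $j(\S)/G_{\S\ast\T}$ of a putative projection, whereas the paper gives an explicit first-order description of $\S'$ (adding clauses \autoref{xcond-newcoherence} and \autoref{xcond-cutoff}, which refer to the generic objects $C^y_\beta$ and $\mathcal{D}$) and then exhibits a bijective, order-preserving $\Psi$ from a dense subset of $\S \ast \T \ast \S'$ to the dense subset $D(j(\S))$. This is what makes the countable-closure verification tractable: one builds a parsimonious lower bound directly in $\S'$, case-splitting on Clause \autoref{xcond-cutoff} at $\kappa$, rather than gluing a $j(\S)$-lower-bound to an element of $G_{\S\ast\T}$. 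Your gluing step is not obviously wrong, but the coherence at $\kappa$ for bits at $\pair{\alpha}{x}$ with $\alpha > \kappa$ depends on the generic objects in a way that the explicit clause \autoref{xcond-newcoherence}(c) tracks; ``the same case analysis used in \autoref{closure}'' does not literally apply because \autoref{closure} never has to cohere against an external generic thread. You would need to make the argument that coherence at $\kappa$ against $\mathcal{D}$ is preserved explicit, which is essentially the new content of the paper's proof.

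So: same basic decomposition, but the definition of the $\T$-coordinate is wrong as stated, and once corrected you would need the explicit description of $\S'$ (or an equivalent amount of bookkeeping) to push the closure argument through.
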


Observe that the fourth premise can be fulfilled using the Absorption Lemma.



\begin{proof} First we describe $\S'$ and some relevant dense subsets of $\S$ and $j(\S)$. Working in $V[G_{j(\mathfrak{C})}]$, $j(\mathfrak{S})$ (where $\mathfrak{S}$ is from \autoref{newforcing}) is of course equal to the set of bits $(\pair{\alpha}{x},\intv{\beta}{\gamma})$ such that $\alpha<j(\kappa)$, $x \in \omega$, $\beta \le \gamma < j(\kappa)$. Let $C_\alpha^x = \dot{C}_\alpha^x[G_\S]$ for all $\pair{\alpha}{x} \in \Lim(\aleph_2) \times \omega$ and let $\mathcal{D} = \{\beta < \aleph_2:\exists \gamma, \intv{\beta}{\gamma} \in G_\T\}$.

Now let $\S'$ consist of $s \subseteq j(\mathfrak{S}) \setminus \mathfrak{S}$ such that \autoref{cond-size} through \autoref{cond-decisiveness} from \autoref{newforcing} hold, such that its ordering is defined like $\le_\S$, and moreover two additional clauses hold, the first of which is a modification of \autoref{cond-coherence} from \autoref{newforcing}:

\begin{enumerate}
\setcounter{enumi}{9}

\item\label{xcond-newcoherence} Suppose that $(\pair{\alpha}{x},\intv{\beta}{\gamma}) \in s$ and either $\cf(\beta) = \omega_1$ or else $\{\beta'<\beta: \exists \gamma',(\pair{\alpha}{x},\intv{\beta'}{\gamma'}) \in s\}$ is unbounded in $\beta$. Then:

\begin{enumerate}[(a)]

\item If $\beta > \kappa$, then there is some $y \in \omega$ such that for all $\beta' \le \gamma' < \beta$, $(\pair{\alpha}{x},\intv{\beta'}{\gamma'}) \in s$ if and only if $(\pair{\beta}{y},\intv{\beta'}{\gamma'}) \in s$.

\item If $\beta < \kappa$, then there is some $y \in \omega$ such that for all $\beta' \le \gamma' < \beta$, if $(\pair{\alpha}{x},\intv{\beta'}{\gamma'}) \in s$ then $\beta' \in C^y_\beta$ and $C^y_\beta \cap \interval[open left]{\beta'}{\gamma'} = \emptyset$.


\item If $\beta = \kappa$, then for all $\beta',\gamma' < \beta$, if $(\pair{\alpha}{x},\intv{\beta'}{\gamma'}) \in s$, then $\beta' \in \mathcal{D}$ and $\mathcal{D} \cap \interval[open left]{\beta'}{\gamma'} = \emptyset$.


\end{enumerate}

\item\label{xcond-cutoff} For all limit ordinals $\alpha$ and all $x \in \omega$ such that $\exists \tilde \beta, \tilde \gamma, (\pair{\alpha}{x},\intv{\tilde \beta}{\tilde \gamma}) \in s$, one of the following holds:

\begin{enumerate}[(a)]

\item $(\pair{\alpha}{x},\intv{\kappa}{\gamma}) \in s$ for some $\gamma$.

\item There exists a non-empty finite sequence $\beta_0 < \beta_1 < \ldots < \beta_k$ such that the following hold:

\begin{enumerate}[(i)]

\item For all $\ell \in \{0,1,\ldots,k\}$, $(\pair{\alpha}{x},\intv{\beta_\ell}{\gamma}) \in s$ for some $\gamma$;

\item Either $\cf(\beta_0) = \omega_1$ or there are sequences $\seq{\beta_n}{n<\omega}$, $\seq{\gamma_n}{n<\omega}$ with $\sup_{n<\omega}\beta_n = \beta$ and $(\pair{\alpha}{x},\intv{\beta_n}{\gamma_n})\in s$ for all $n<\omega$;

\item For all $\beta_\ell$ with $\ell>0$, either $\beta_{\ell}=\beta_{\ell-1}+1$, or $\beta_\ell = \bar \beta + 1$ and $\bit{\alpha}{x}{\beta_{\ell-1}}{\bar \beta} \in s$, or there is a sequence $\seq{\gamma_n}{n<\omega}$ with $\sup_{n<\omega}\gamma_n = \beta_\ell$ and $(\pair{\alpha}{x},\intv{\beta_{\ell-1}}{\gamma_n}) \in s$ for all $n<\omega$.

\item $(\pair{\alpha}{x},\intv{\beta_\ell}{\gamma}) \in s$ for some $\gamma \ge \kappa$.

\end{enumerate} 
\end{enumerate} 

%
%
%
%
\end{enumerate}

Let $D(j(\S))$ be the dense subset of $s \in j(\S)$ such that \autoref{xcond-cutoff} holds for $s$. Now we give the isomorphism $\Psi: \{(s,t,s'): s \in \S,(s,t) \in D(\S \ast \T), s' \in \S'\} \to D(j(\S))$. Specifically,
\[
\Psi(s,t,s') = s \cup s' \cup \{(\pair{\kappa}{x},\intv{\beta}{\gamma}): x \in \omega, \intv{\beta}{\gamma} \in t \}.
\]
Observe that the definition of $\Psi$ is where we use \autoref{cond-oneomega1club} from \autoref{newforcing}. We are also using the fact that $V[G_{j(\mathfrak{C})}] \models ``\kappa \in j(\kappa) \cap \cof(\omega_1)$''. It is immediate that $\Psi$ is order-preserving. Moreover, $\Psi$ is a bijection because it has a natural inverse.

%


Now we will show that $\S'$ is countably closed in $V[G_{j(\mathfrak{C})}]$. Suppose $\vec s = \seq{s_i}{i<\omega}$ is a descending sequence in $\S'$. Define $\ed$, $\fd$, $\ep$, $\fp$ as in \autoref{kindsofpoints}, and let $\bar s$ be a parsimonious lower bound of $\vec s$ defined exactly as in \autoref{closure}. So $\bar s \in j(\S)$. We know that $\bar s$ satisfies \autoref{cond-size} through \autoref{cond-decisiveness} from \autoref{newforcing}. We will prove \autoref{xcond-newcoherence} after \autoref{xcond-cutoff}, since the former depends on the proof of the latter.

\autoref{xcond-cutoff}: The case where $\alpha \in \fd \setminus (\ep \cup \fp)$ is trivial. Given $\alpha \in \ed$, $x \in \omega$, there is some $i$ such that $\alpha \in \dom(s_i)$. Then the fact that $s_i$ satisfies \autoref{xcond-cutoff} is enough to imply that $\bar s$ satisfies \autoref{xcond-cutoff}. If $\alpha \in \fp$, then there is some $\pair{\alpha'}{x'}$ witnessing $\alpha \in \fp$, and we get \autoref{xcond-cutoff} from the $s_i$ for $i$ large enough that $\alpha' \in \dom(s_i)$.

\autoref{xcond-newcoherence}: For the purpose of verifying this clause, when we say that $y$ witnesses coherence of $\dot{C}^x_\alpha$ at $\beta$ for $\beta \le \kappa$, we are referring to the $y$ mentioned in Case (b) or Case (c) of the clause. 

Now fix $\alpha \in \Lim(j(\kappa) \setminus (\kappa+1))$ and $x \in \omega$ as in the statement of \autoref{xcond-newcoherence}. We have already handled the case where $\beta > \kappa$ in \autoref{closure}. Therefore we can assume that $\beta<\kappa$ for the rest of the proof. We break the proof into cases.

\begin{description}[style=unboxed,leftmargin=.2cm]

\item[$\alpha \in \ep, \beta \in \ep$, for large $i$ Case (a) of \autoref{xcond-cutoff} holds for $s_i$ w.r.t$.$ $\pair{\alpha}{x}$] We have that $\beta \in \lim \mathcal{D}$, so there is some $y \in \omega$ such that $\mathcal{D} \cap \beta =  C_\beta^y$. Then $y$ witnesses coherence of $\dot{C}_x^\alpha$ at $\beta$.

\item[$\alpha \in \ep, \beta \in \fp$, for large $i$ Case (a) of \autoref{xcond-cutoff} holds for $s_i$ w.r.t$.$ $\pair{\alpha}{x}$] Observe that since $\S \ast \T$ has a countably closed dense subset, it follows that $\T$ is countably distributive, so $G_\T$ is countably closed, hence $\beta \in \lim \mathcal{D}$. This then follows the reasoning of the previous case.


\item[$\alpha \in \ep, \beta \in \ep$, for large $i$ Case (b) of \autoref{xcond-cutoff} holds for $s_i$ w.r.t$.$ $\pair{\alpha}{x}$] This means that there is some $\beta^+$ such that for large $i$, $s_i \Vdash ``\beta^+ = \max(\lim \dot{C}_\alpha^x \cap \kappa)"$, and that $\beta \le \beta^+$. If $\beta = \beta^+$ and $y$ witnesses coherence of $\dot{C}_\alpha^x$ at $\beta^+$, then we are done with this case. If $\beta<\beta^+$, $y$ witnesses coherence of $\dot{C}_\alpha^x$ at $\beta^+$, and $y'$ is such that $C^y_{\beta^+} \cap \beta = C_\beta^{y'}$ (since $\beta \in \lim C_{\beta^+}^y$), then $y'$ witnesses coherence of $\dot{C}_\alpha^x$ at $\beta$.

\item[$\alpha \in \ep, \beta \in \fp$, for large $i$ Case (b) of \autoref{xcond-cutoff} holds for $s_i$ w.r.t$.$ $\pair{\alpha}{x}$] This is like the previous case, noting that if $\beta<\beta^+$ as written there, then we would still have $\beta \in \lim C_{\beta^+}^y$.

\item[$\pair{\tilde \alpha}{\tilde x}$ witnesses $\alpha \in \fp, \beta \in \ep$, for lg$.$ $i$ Case (a) of \autoref{xcond-cutoff} holds for $s_i$, $\pair{\tilde \alpha}{\tilde x}$] We have $\beta \in \lim \mathcal{D}$, so there is some $y \in \omega$ such that $\mathcal{D} \cap \beta = C_\beta^y$. So $y$ witnesses coherence of $\dot{C}^{\tilde x}_{\tilde \alpha}$ at $\beta$, hence it witnesses coherence of $C^x_\alpha$ at $\beta$.

\end{description}

The remaining cases all consider $\alpha \in \fp$ and take $\pair{\tilde \alpha}{\tilde x}$ witnessing this. The reasoning depends on whether Case (a) or Case (b) holds for $s_i$ with respect to $\pair{\tilde \alpha}{\tilde x}$ for large $i$. The arguments are analogous to the cases already discussed.\end{proof}
 
%

Now we can use $\T$ to show that stronger squares do not hold. Since the bulk of the work was done with \autoref{basiclifting}, and the remaining lemmas are variations of standard arguments, we will handle the rest with a light amount of detail.

\begin{lemma}\label{threadpres1} If $W$ is a ground model and $\mathcal{C}$ is a $\square(\aleph_2,<\aleph_0)$-sequence in $W[\S]$, then $\T$ does not thread $\mathcal{C}$.\end{lemma}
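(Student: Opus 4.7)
The plan is a contradiction argument that produces too many threads of $\mathcal{C}$ to fit inside a $\square(\aleph_2,<\aleph_0)$-sequence. Suppose toward contradiction that some $(s_0,\dot t_0) \in \S \ast \T$ forces a $\T$-name $\dot D$ to be a thread of $\mathcal{C}$. Work in $W[\S]$ below $s_0$, where $\mathcal{C}$ is a $\square(\aleph_2,<\aleph_0)$-sequence and $t_0 \Vdash_\T ``\dot D \text{ threads } \mathcal{C}"$. The first observation is that $\dot D$ cannot have a ground-model realization: if some $t \le t_0$ forced $\dot D$ to equal a check-name $\check D^*$ with $D^* \in W[\S]$, then $D^*$ itself would be a thread of $\mathcal{C}$ in $W[\S]$, contradicting $\mathcal{C}$ being a square sequence. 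Consequently, below every $t \le t_0$ there exist $t',t'' \le t$ in $\T$ and an ordinal $\beta < \aleph_2$ with $t' \Vdash \beta \in \dot D$ and $t'' \Vdash \beta \notin \dot D$.

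In $W[\S]$, applying pigeonhole to the finite cardinals $|\mathcal{C}_\alpha|$ on the regular $\aleph_2$ yields some $n < \omega$ such that $S := \{\alpha \in \Lim(\aleph_2) : |\mathcal{C}_\alpha| \le n\}$ is stationary. The next step is to consider the finite product $\T^{n+1}$ over $W[\S]$. Since $\T$ is countably closed as a factor of $\S \ast \T$, so is $\T^{n+1}$; moreover, the strongly-generic-condition proof of \autoref{threadcardpres} adapts coordinatewise (appending $\intv{\delta}{\delta}$ to each coordinate, where $\delta = M \cap \aleph_2$ for a basic $M$) to show $\T^{n+1}$ preserves $\aleph_2$. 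Using the splitting property of $\T$ recursively, one builds a condition $(t^0,\ldots,t^n) \le (t_0,\ldots,t_0)$ whose coordinates are pairwise $\T$-incompatible.

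Let $G_0 \times \cdots \times G_n$ be $\T^{n+1}$-generic below $(t^0,\ldots,t^n)$, and set $D^i := \dot D[G_i]$. Each $D^i$ threads $\mathcal{C}$ in $W[\S][\vec G]$, and the $D^i$ are pairwise distinct by construction. For every pair $i < j$ and every $\beta \in D^i \triangle D^j$, whenever $\alpha > \beta$ lies in $\lim(D^i) \cap \lim(D^j)$ one has $D^i \cap \alpha \ne D^j \cap \alpha$; so the set $E_{ij}$ of such $\alpha$ contains a tail of a club, hence is club in $\aleph_2$. Intersecting the finitely many $E_{ij}$ yields a club $E$, and since $S$ remains stationary in $W[\S][\vec G]$ there is some $\alpha \in S \cap E$ at which $\{D^i \cap \alpha : i \le n\}$ consists of $n+1$ pairwise distinct elements of $\mathcal{C}_\alpha$, contradicting $|\mathcal{C}_\alpha| \le n$.

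The main obstacle in carrying out this plan is verifying $\aleph_2$-preservation of the product $\T^{n+1}$ over $W[\S]$; this is where one leans on the basic-model technology from \autoref{subsec-submodels} and the explicit form of strongly generic conditions used for $\S \ast \T$. A secondary technical point is the splitting lemma for $\T$, but this follows immediately from the fact that $\mathcal{C}$ has no thread in $W[\S]$.
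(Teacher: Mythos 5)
Your route through this lemma is genuinely different from the paper's, but it has a gap that you flag as ``the main obstacle'' without resolving, and it is a real one: you need $\T^{n+1}$ over $W[\S]$ not merely to preserve $\aleph_2$ but to preserve the stationarity of $S$. These are not the same thing. Countably distributive posets can kill stationary subsets of $\aleph_2 \cap \cof(\omega)$, and nothing in your pigeonhole step stops $S$ from concentrating on points of cofinality $\omega$. The basic-model / strongly-generic-condition machinery of \autoref{subsec-submodels} produces only points of cofinality $\omega_1$ (each $\delta = M \cap \aleph_2$ has $\cf(\delta) = \omega_1$), so even if you verified the coordinatewise strong-genericity claim for $\T^{n+1}$, that would give preservation of stationary subsets of $\aleph_2 \cap \cof(\omega_1)$, not of your $S$. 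To patch this you would want to run the pigeonhole on $\aleph_2 \cap \cof(\omega_1)$ so that $S \cap \cof(\omega_1)$ is stationary and then target basic models $M$ with $M \cap \aleph_2 \in S$; you would also need to actually carry out the strong-genericity argument for the product, which, while plausible, is not a one-line consequence of \autoref{threadcardpres}.

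The paper avoids all of this by never leaving the ground model $W$. It runs a single fusion in $D(\S \ast \T)$: it builds one $\le_\S$-descending sequence $\seq{s_i}{i<\omega}$ and, over its common lower bound $\bar s$, an $\omega$-indexed family of pairwise $\dot D$-incompatible $\T$-conditions all sharing the same maximum $\bar\gamma$. Each such condition forces a distinct candidate for $\dot D \cap \bar\gamma$, and each candidate is forced to lie in $\mathcal{C}_{\bar\gamma}$ because $\bar\gamma$ is a limit point of the thread; this yields $\omega$-many distinct members of $\mathcal{C}_{\bar\gamma}$, contradicting $|\mathcal{C}_{\bar\gamma}| < \aleph_0$ directly. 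No generic object is built, no product of $\T$'s is formed, and no preservation theorem is invoked. Your finite-product idea can likely be repaired, but the paper's ground-model fusion is shorter and sidesteps exactly the preservation questions your proof leans on.
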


\begin{proof} This lemma has analogs for Jensen-style posets (see \cite{AST-Magidor}, Lemma 4.5). Let $\dot D$ be a name for a thread for $\mathcal{C}$ added over $W[\S]$ by $\T$ and work in $W$. Build a descending sequence $\seq{s_i}{i<\omega}$ of conditions in $\S$, sequences $\seq{t_i^j}{j < \omega}$ in $\T$ for $i<\omega$, and an increasing sequence of ordinals $\seq{\gamma_i}{i<\omega}$ such that the following hold:

\begin{itemize}
\item $(s_i,t_i^j) \in D(\S \ast \T)$ for all $i,j<\omega$.

\item $t_i^j = t_0^j$ for all $j<i$.

\item For all $i,j<\omega$, $\max t_0^j = \max t_i^j$.

\item $(s_{i+1},t_0^{i+1})$ and $(s_{i+1},t_{i+1}^{i+1})$ decide ``$\gamma_i \in \dot D$'' differently. 
\end{itemize}

Then let $\bar s$ be a lower bound of $\seq{s_i}{i<\omega}$ and let $\bar{t}_i$ be a lower bound of $\seq{t_i^j}{j < \omega}$ for all $i<\omega$, and also let $\bar \gamma$ be a lower bound of $\seq{\gamma_i}{i<\omega}$. Then each $(\bar s,\bar{t}_i)$ gives a different possibility for $C \in \mathcal{C}_{\bar \gamma}$, contradicting the fact that $\mathcal{C}$ is a $\square(\aleph_2,<\aleph_0)$-sequence.
\end{proof}

\begin{lemma}\label{threadpres2} If $\mathcal{C}$ is a $\square(\lambda,\aleph_0)$-sequence and $\P$ is countably closed, then $\P$ does not thread $\mathcal{C}$.\end{lemma}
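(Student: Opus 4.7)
The plan is to run a binary tree / fusion argument against the existence of a thread, using countable closure to produce $2^{\aleph_0}$ many distinct candidates for $\mathcal{C}_{\gamma^*}$ at some limit ordinal $\gamma^*$, thereby contradicting $|\mathcal{C}_{\gamma^*}| \le \aleph_0$.

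Suppose, toward a contradiction, that some $p_0 \in \P$ forces that $\dot D$ threads $\mathcal{C}$. Since $\mathcal{C}$ is a $\square(\lambda,\aleph_0)$-sequence in the ground model, no thread of $\mathcal{C}$ exists in $V$; hence for no $q \le p_0$ does $q$ decide $\dot D$ completely. In particular, for every $q \le p_0$ there exist $q', q'' \le q$ and an ordinal $\alpha < \lambda$ such that $q'$ and $q''$ decide ``$\alpha \in \dot D$'' oppositely. This splitting ability is the engine of the construction.

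I would build, by recursion on $n<\omega$, a tree of conditions $\seq{p_s}{s \in 2^{<\omega}}$ and a strictly increasing sequence of ordinals $\seq{\gamma_n}{n<\omega}$ such that
\begin{enumerate}
\item $p_\emptyset \le p_0$;
\item $p_{s\conc i} \le p_s$ for each $s \in 2^{<\omega}$ and $i \in \{0,1\}$;
\item $p_s \Vdash ``\gamma_n \in \dot D"$ for every $s \in 2^n$;
\item for distinct $s,t \in 2^n$, $p_s$ and $p_t$ force different sets for $\dot D \cap \gamma_n$.
\end{enumerate}
Passing from level $n$ to level $n+1$ is done in two substeps. First, for each $s \in 2^n$ invoke the splitting fact to pick $q_s^0, q_s^1 \le p_s$ and $\delta_s$ on which they disagree about membership in $\dot D$; let $\delta$ be the maximum of the finitely many $\delta_s$. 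Second, extend each $q_s^i$ to some $p_{s\conc i} \le q_s^i$ forcing a common value $\gamma_{n+1}>\delta$ into $\dot D$ (choosing $\gamma_{n+1}$ first below $q_s^0$, then chasing it through the remaining finitely many $q_s^i$'s by further extensions). Property (4) at level $n+1$ follows: if the initial segments differ at level $n$, (4) transfers from $\gamma_n$ to $\gamma_{n+1}$, and the new split at $\delta_s < \gamma_{n+1}$ handles the case where only the last bit differs.

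By countable closure, for each branch $f \in 2^\omega$ we get a lower bound $p_f$ of $\seq{p_{f\rest n}}{n<\omega}$. Set $\gamma^* = \sup_n \gamma_n$, a limit ordinal below $\lambda$. Then $p_f \Vdash ``\gamma_n \in \dot D"$ for every $n$, so $p_f \Vdash ``\gamma^* \in \lim(\dot D)"$, which combined with the thread property forces $\dot D \cap \gamma^* \in \check{\mathcal{C}}_{\gamma^*}$. On the other hand, for $f \ne g$ with $n$ least such that $f(n)\ne g(n)$, clause (4) at level $n+1$ and the refinement $p_f \le p_{f\rest(n+1)}$, $p_g \le p_{g\rest(n+1)}$ show that $p_f$ and $p_g$ force different values of $\dot D \cap \gamma_{n+1}$, hence different values of $\dot D \cap \gamma^*$. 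Picking a generic $G$ containing any single $p_f$ then exhibits continuum-many possible values of $\dot D \cap \gamma^*$ (one per branch) all lying in $\mathcal{C}_{\gamma^*}$, so $|\mathcal{C}_{\gamma^*}| \ge 2^{\aleph_0}$, contradicting $|\mathcal{C}_{\gamma^*}| \le \aleph_0$.

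The only real subtlety is the recursion step: ensuring both that each branch can be forced to push a common $\gamma_{n+1}$ into $\dot D$ (so that $\gamma^*$ ends up in $\lim(\dot D)$) and that distinct branches disagree on $\dot D\cap\gamma_{n+1}$. This is handled by doing the splitting first (at $\delta_s$'s, using the no-ground-model-thread observation) and the common extension afterward (at $\gamma_{n+1}>\max_s \delta_s$); countable closure then takes care of the limit.
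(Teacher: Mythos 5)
Your proof is correct and follows essentially the same binary-tree / fusion argument as the paper's. You are slightly more careful than the paper's sketch in that you explicitly arrange for every branch to force a common $\gamma_{n+1}$ into $\dot D$ at each level, which guarantees $\gamma^* \in \lim(\dot D)$ for each $p_f$ and hence that each $\dot D \cap \gamma^*$ is genuinely forced to lie in $\mathcal{C}_{\gamma^*}$; this step is needed for the argument to close and is left implicit in the paper.
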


\begin{proof} Stronger versions of this lemma appear elsewhere (\cite{Hayut-LambieHanson2017}), but we sketch an argument for completeness.


Let $\dot D$ be a name for a thread supposedly added by $\P$. Build a tree $\seq{p_x}{x \in 2^{<\omega}}$ and a collection of ordinals $\seq{\gamma_x}{x \in 2^{<\omega}}$ such that:

\begin{itemize}
\item $x \sqsubseteq y$ implies $p_y \le p_x$,
\item $p_{x {}^\frown 0}$ and $p_{x {}^\frown 1}$ decide ``$\gamma_x \in \dot D$'' differently for some $\gamma_x$ above $\sup_{|y|<x}\gamma_y$.
\end{itemize}

Then for all $X \in 2^\omega$, let $p_X$ be the lower bound of $\seq{p_{X \rest n}}{n<\omega}$. Let $\gamma = \sup_{x \in 2^{\omega}}$. Then the $p_X$'s give $2^\omega$-many possibilities for $C \in \mathcal{C}_\gamma$, which is a contradiction of the fact that $\mathcal{C}$ is a $\square(\lambda,\aleph_0)$-sequence.\end{proof}

\begin{proposition}\label{threadpres3} If $\mathcal{C}$ is a $\square_{\mu,\kappa}$-sequence and $\P$ preserves $\mu^+$, then $\P$ does not thread $\mathcal{C}$.\end{proposition}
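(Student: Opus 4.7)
The plan is to directly adapt the argument from \autoref{whynothread} to the setting of a forcing extension. Suppose toward contradiction that some condition in $\P$ forces $\dot{D}$ to be a thread of $\mathcal{C}$, and let $V[G]$ be any generic extension realizing this, so that in $V[G]$ we have an actual $D \subseteq \mu^+$ which is a club with $D \cap \alpha \in \mathcal{C}_\alpha$ for every $\alpha \in \lim(D)$. Since $D$ is a club in $\mu^+$, we have $|D| = \mu^+$ in $V[G]$. The hypothesis that $\P$ preserves $\mu^+$ guarantees that $\mu^+$ is still a regular cardinal in $V[G]$, so in fact $\ot(D) = \mu^+$ in $V[G]$.

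Next I would enumerate $D$ in increasing order as $\{d_\xi : \xi < \mu^+\}$ and select a limit ordinal $\xi$ with $\mu < \xi < \mu^+$; concretely, take $\xi = \mu + \omega$, noting that $\mu + \omega > \mu$ for any infinite cardinal $\mu$ by ordinal arithmetic and that $|\mu + \omega| = \mu < \mu^+$. Then $d_\xi \in \lim(D)$, so the thread property gives $D \cap d_\xi \in \mathcal{C}_{d_\xi}$. But the order-type restriction built into the definition of a $\square_{\mu,\kappa}$-sequence forces $\ot(D \cap d_\xi) \le \mu$, while $\ot(D \cap d_\xi) = \xi > \mu$, yielding the desired contradiction.

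There is no serious obstacle here: the argument is essentially the observation from \autoref{whynothread} transported across the forcing, and preservation of $\mu^+$ is precisely what is required for the transport, since without it $D$ could have order type strictly less than $\mu^+$ from the perspective of $V[G]$ and the existence of the critical index $\xi$ would not be automatic.
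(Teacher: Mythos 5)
Your proof is essentially the paper's proof made explicit: the paper simply remarks that the statement is ``basically a restatement of \autoref{whynothread},'' and you carry out that restatement in $V[G]$, correctly taking $\xi = \mu+\omega$ where the published remark contains a typo (it writes $\kappa+\omega$ and ``order-type larger than $\kappa$'' where $\mu$ is intended, since it is the order-type bound $\ot C \le \mu$, not the width bound $\kappa$, that does the work). One subtle point you slide past slightly faster than you should: preserving $\mu^+$ as a cardinal does not \emph{by itself} guarantee that $\mu^+$ remains regular in $V[G]$ --- a forcing that collapses $\mu$ while keeping $\mu^+$ a cardinal could leave $\mu^+$ with cofinality below $\mu$, in which case a club $D\subseteq\mu^+$ of small order-type would have no limit points at all and would thread vacuously. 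The implication you want (``$\mu^+$ stays regular'') does follow once $\mu$ is also preserved, since then $\mu^+$ remains the cardinal successor of $\mu$ and hence regular; this holds in every application in the paper, and the paper's one-line proof is no more careful about it than you are, so this is a shared informality rather than a defect of your argument relative to the source.
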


\begin{proof} This is basically a restatement of \autoref{whynothread}.\end{proof}

\begin{corollary}\label{failure1} If $\kappa$ is weakly compact, then $V[\Col(\aleph_1,<\kappa)][\S] \models \neg \square(\aleph_2,<\aleph_0)$.\end{corollary}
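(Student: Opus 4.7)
The strategy is the standard weakly compact lifting argument, using Lemma \ref{basiclifting} as the replacement for the usual generic condition argument in Jensen's method. Suppose for contradiction that $\dot{\mathcal{C}}$ is a $\mathfrak{C}\ast\dot{\S}$-name (where $\mathfrak{C} = \Col(\aleph_1,<\kappa)$) forced to be a $\square(\aleph_2,<\aleph_0)$-sequence. By weak compactness, I would choose a transitive $\mathcal{M} \models \ZFC^-$ of size $\kappa$ with $V_\kappa \subseteq \mathcal{M}$, $\mathcal{M}^{<\kappa} \subseteq \mathcal{M}$, and $\dot{\mathcal{C}} \in \mathcal{M}$, together with an elementary embedding $j:\mathcal{M} \to \mathcal{N}$ with $\crit(j) = \kappa$ and $\mathcal{N}$ transitive.

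Next I would lift $j$ in two stages. First, using the factorization $j(\mathfrak{C}) \cong \mathfrak{C} \ast \Col(\aleph_1,[\kappa,j(\kappa)))$: fixing a $\mathfrak{C}$-generic $G_\mathfrak{C}$ over $V$, the tail forcing is $\aleph_1$-closed and has only $\le \kappa$-many dense sets from $\mathcal{M}[G_\mathfrak{C}]$, so in a mild outer extension I can build a generic giving $j(G_\mathfrak{C})$ and the lift $j:\mathcal{M}[G_\mathfrak{C}] \to \mathcal{N}[j(G_\mathfrak{C})]$. Second, fix a generic $G_\S$ for $\S$ over $V[G_\mathfrak{C}]$, and let $\mathcal{C} := \dot{\mathcal{C}}[G_\mathfrak{C}][G_\S]$. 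By Lemma \ref{basiclifting}, in $V[j(G_\mathfrak{C})]$ the forcing $j(\S)$ is equivalent to $\S \ast \T \ast \S'$ with $\S'$ countably closed. I choose $G_\T$ generic over $V[j(G_\mathfrak{C})][G_\S]$ and $G_{\S'}$ generic over $V[j(G_\mathfrak{C})][G_\S][G_\T]$, so that together they give a $j(\S)$-generic $G_{j(\S)}$ over $\mathcal{N}[j(G_\mathfrak{C})]$. This yields the lift $j:\mathcal{M}[G_\mathfrak{C}][G_\S] \to \mathcal{N}[j(G_\mathfrak{C})][G_{j(\S)}]$.

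Now I exploit elementarity at $\kappa$. In $\mathcal{N}[j(G_\mathfrak{C})][G_{j(\S)}]$, $j(\mathcal{C})$ is a $\square(j(\kappa),<\aleph_0)$-sequence, and $j(\mathcal{C}){\rest}\kappa = \mathcal{C}$ since $j$ fixes ordinals below $\kappa$ and $\mathcal{C}$ is constructed from $j(\dot{\mathcal{C}})$'s restriction below $\kappa$. Pick any $D \in j(\mathcal{C})_\kappa$: $D$ is club in $\kappa = \aleph_2^{V[G_\mathfrak{C}][G_\S]}$, and coherence of $j(\mathcal{C})$ yields $D \cap \alpha \in j(\mathcal{C})_\alpha = \mathcal{C}_\alpha$ for every $\alpha \in \lim(D)$. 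Thus $D$ threads $\mathcal{C}$. However, $D$ lives in $V[j(G_\mathfrak{C})][G_\S][G_\T][G_{\S'}]$, which is a three-step extension of $V[G_\mathfrak{C}][G_\S]$: the intermediate collapse $\Col(\aleph_1,[\kappa,j(\kappa)))$ is countably closed and so adds no thread by Lemma \ref{threadpres2} (noting $\square(\aleph_2,<\aleph_0) \Rightarrow \square(\aleph_2,\aleph_0)$); then $\T$ adds no thread by Lemma \ref{threadpres1}; finally $\S'$ adds no thread by Lemma \ref{threadpres2} again. This contradicts the existence of $D$, finishing the proof.

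The main obstacle is ensuring that all of the generics $G_\mathfrak{C}, G_\S, G_\T, G_{\S'}$ and the tail-collapse generic coexist in a common universe so that the lift $j:\mathcal{M}[G_\mathfrak{C}][G_\S] \to \mathcal{N}[j(G_\mathfrak{C})][G_{j(\S)}]$ can be formed; this is handled by standard outer-universe constructions for weakly compact embeddings combined with Lemma \ref{basiclifting}, which packages the key combinatorial step of presenting $j(\S)$ as a three-step iteration.
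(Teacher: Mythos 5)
Your plan has a genuine gap in how you build $j(G_\mathfrak{C})$, and this propagates to two places. You use the factorization $j(\mathfrak{C}) \cong \mathfrak{C} \ast \Col(\aleph_1,[\kappa,j(\kappa)))$, pick $G_\mathfrak{C}$, and then construct $j(G_\mathfrak{C})$ from $G_\mathfrak{C}$ together with a tail-collapse generic, so $V[j(G_\mathfrak{C})] = V[G_\mathfrak{C}][G_{\text{tail}}]$. But Lemma \ref{basiclifting} explicitly requires that $V[j(G_\mathfrak{C})]$ already \emph{contain} the generics $G_\S$ and $G_\T$: the poset $\S'$ is defined from the generic sets $C^x_\alpha$ and the thread $\mathcal{D}$, which are only available once $G_\S$ and $G_\T$ live inside $V[j(G_\mathfrak{C})]$. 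If you force $G_\S$ separately over $V[G_\mathfrak{C}]$ and force the tail generic mutually generically, then $G_\S \notin V[G_\mathfrak{C}][G_{\text{tail}}]$, so the lemma's fourth premise fails and the factorization $j(\S) \sim \S\ast\T\ast\S'$ in $V[j(G_\mathfrak{C})]$ is not available. The paper's note right after Lemma \ref{basiclifting} is exactly the point: one must use the Absorption Lemma to realize $\Col(\aleph_1,<j(\kappa))$ as $\Col(\aleph_1,<\kappa)\ast\S\ast\T\ast\mathfrak{R}$ with $\mathfrak{R}$ countably closed, and then take $j(G_\mathfrak{C})$ to be the $\Col(\aleph_1,<j(\kappa))$-generic induced by $G_\mathfrak{C}\ast G_\S\ast G_\T\ast G_\mathfrak{R}$, so that $G_\S, G_\T$ are automatically inside $V[j(G_\mathfrak{C})]$.

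The second manifestation of the same problem is your application of Lemma \ref{threadpres1}. You want to apply it to $\T$ forced over $V[j(G_\mathfrak{C})][G_\S]$, but the lemma concerns the specific situation $W[\S]$ where $\S$ and $\T$ are the forcings computed in $W$ and $\mathcal{C}$ is a $\square(\aleph_2,<\aleph_0)$-sequence with $\aleph_2 = \aleph_2^{W[\S]}$. After forcing the tail collapse, $\kappa$ has been collapsed to $\aleph_1$ and $\aleph_2$ has jumped to $j(\kappa)$, so $\mathcal{C}$ (which lives on $\kappa$) is no longer a $\square(\aleph_2,<\aleph_0)$-sequence in $V[j(G_\mathfrak{C})][G_\S]$ and the lemma does not apply. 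Nor can you fall back on Lemma \ref{threadpres2} for the $\T$-step: the paper establishes only that $\T$ is countably \emph{distributive} over $V[\S]$ (as a factor of a countably closed iteration), not countably \emph{closed}. The absorption-based ordering in the paper --- $G_\mathfrak{C}\ast G_\S\ast G_\T\ast G_\mathfrak{R}\ast G_{\S'}$ --- places $\T$ directly after $\S$ over $V[G_\mathfrak{C}]$, which is exactly where Lemma \ref{threadpres1} is designed to bite, and then the remaining factors $\mathfrak{R}$ and $\S'$ are countably closed, handled by Lemma \ref{threadpres2}. So the overall architecture is right, but you need to replace the na\"ive tail-collapse factorization with the absorption factorization.
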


\begin{proof} Let $M$ be a $\kappa$-sized transitive model containing $\S$ and a supposed $\Col(\aleph_1,<\kappa) \ast \S$-name $\dot{\mathcal{C}}$ for a $\square(\aleph_2,<\aleph_0)$-sequence. Let $j:M \to N$ be a weakly compact embedding with critical point $\kappa$, and let $G_\mathfrak{C} \ast G_\S \ast G_\T$ be $\Col(\aleph_1,<\kappa) \ast \S \ast \T$-generic over $V$. Since $\S \ast \T$ is countably closed, the quotient $\mathfrak{R}:=\Col(\aleph_1,<j(\kappa))/G_\mathfrak{C} \ast G_\S \ast G_\T)$, is forcing-equivalent to a countably closed forcing (see the section on absorption in \cite{Handbook-Cummings}). Let $G_\mathfrak{R}$ be generic for $\mathfrak{R}$ and work in $V[G_\mathfrak{C} \ast G_\S \ast G_\T \ast G_\mathfrak{R}]$ to use \autoref{basiclifting} to lift $j$ to $j:M[G_\mathfrak{C}][G_\S] \to N[j(G_\mathfrak{C})][j(G_\S)]$ by forcing with a $\S'$-generic $G_{\S'}$. Use the lift to define a thread $D$ for $\mathcal{C}:=\dot{\mathcal{C}}[G_\mathfrak{C} \ast G_\S]$ by taking an element from the $\kappa\th$ level of $j(\mathcal{C})$. \autoref{threadpres1} shows that $\T$ could not have added $D$, and \autoref{threadpres2} show that neither $\mathfrak{R}$ nor $\S'$ could have added $D$. Therefore the thread $D$ already exists in $V[G_\mathfrak{C} \ast G_\S]$, and so $\mathcal{C}$ is not actually a $\square(\aleph_2,<\aleph_0)$-sequence.\end{proof}

\begin{corollary}\label{failure2} If $\kappa$ is weakly compact then $V[\Col(\aleph_1,<\kappa)][\S] \models \neg \square_{\aleph_1,\aleph_0}$.\end{corollary}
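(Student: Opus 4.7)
The plan is to adapt the lifting argument of \autoref{failure1} almost verbatim, with one substitution: replace \autoref{threadpres1} by \autoref{threadpres3} for the $\T$-factor. First I would suppose, toward contradiction, that $\dot{\mathcal{C}}$ names a $\square_{\aleph_1,\aleph_0}$-sequence in a $\kappa$-sized transitive model $M$ containing $\Col(\aleph_1,<\!\kappa)$ and $\S$, fix a weakly compact embedding $j:M\to N$ with critical point $\kappa$, and pass to $V[G_\mathfrak{C} \ast G_\S \ast G_\T \ast G_\mathfrak{R}]$ where $\mathfrak{R}$ is the countably closed quotient of $\Col(\aleph_1,<\!j(\kappa))$ by $\Col(\aleph_1,<\!\kappa) \ast \S \ast \T$. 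Then I would apply \autoref{basiclifting} to force with an $\S'$-generic and obtain a lift $j:M[G_\mathfrak{C}][G_\S] \to N[j(G_\mathfrak{C})][j(G_\S)]$; extracting a club from $j(\mathcal{C})_\kappa$ yields a thread $D$ for $\mathcal{C}=\dot{\mathcal{C}}[G_\mathfrak{C} \ast G_\S]$.

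The remaining task is to locate where $D$ could have been added, and here I would exploit the fact that by definition every $\square_{\aleph_1,\aleph_0}$-sequence is in particular a $\square(\aleph_2,\aleph_0)$-sequence. Consequently \autoref{threadpres2} rules out both of the countably closed factors $\mathfrak{R}$ and $\S'$ without modification. For $\T$ I would instead appeal to \autoref{threadpres3} with $\mu=\aleph_1$, using the same strongly generic condition analysis as in \autoref{threadcardpres}---which goes through in the collapse extension since $\Col(\aleph_1,<\!\kappa)$ preserves $\CH$---to see that $\T$ preserves $\aleph_2$, and therefore cannot thread $\mathcal{C}$. Combining these three non-threading facts places $D\in V[G_\mathfrak{C} \ast G_\S]$, contradicting \autoref{whynothread}.

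The only real subtlety, and what I expect to be the main obstacle, is the need to substitute \autoref{threadpres1} by a different argument for the $\T$-factor: \autoref{threadpres1} is crafted for $\square(\aleph_2,<\!\aleph_0)$-sequences and has no direct analog for $\square_{\aleph_1,\aleph_0}$-sequences, so one leverages instead the bounded order-type clause, which is precisely the extra strength in the definition of $\square_{\aleph_1,\aleph_0}$ that \autoref{threadpres3} consumes.
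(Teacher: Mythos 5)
Your proposal is correct and matches the paper's own argument: you mirror the lifting from \autoref{failure1}, then rule out threading by $\T$ via \autoref{threadpres3} together with the $\aleph_2$-preservation of \autoref{threadcardpres} (which you correctly note goes through in the collapse extension because $\Col(\aleph_1,<\!\kappa)$ forces $\CH$), and rule out the countably closed factors $\mathfrak{R}$ and $\S'$ via \autoref{threadpres2} using that every $\square_{\aleph_1,\aleph_0}$-sequence is in particular a $\square(\aleph_2,\aleph_0)$-sequence. The paper's proof is just a compressed version of exactly this.
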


\begin{proof} A Mahlo cardinal would be sufficient (see \cite{AST-Magidor}, Theorem 4.4). But assume we have a weakly compact cardinal and define a thread $D$ as in \autoref{failure1}. Then \autoref{threadpres3} and \autoref{threadcardpres} together imply that $\T$ could not have added $D$. Then \autoref{threadpres2} shows that the rest of the extension could not have added $D$.\end{proof}

Hence we have finished proving \autoref{maintheorem}.

\subsection*{Acknowledgement}


I would also like to thank the anonymous referees for carefully reading the manuscript and dealing with very rough initial versions. Thank you to Heike Mildenberger for helping with the revision.

\end{document}